\theoremstyle{plain}  
\newtheorem{cor}{Corollary}[section]
\newtheorem{lem}[cor]{Lemma}
\newtheorem{prop}[cor]{Proposition}
\newtheorem{thm}[cor]{Theorem}
\theoremstyle{definition}
\newtheorem{df}[cor]{Definition}
\newtheorem{ex}[cor]{Example}
\newtheorem{para}[cor]{}
\newtheorem{prob}[cor]{Problem}
\newtheorem{rem}[cor]{Remark}
\theoremstyle{remark}
\newtheorem*{claim}{Claim}
\newtheorem*{conv}{Convention}
\newcommand{\bbA}{\mathbb{A}}
\newcommand{\bbK}{\mathbb{K}}
\newcommand{\bbN}{\mathbb{N}}
\newcommand{\bbP}{\mathbb{P}}
\newcommand{\bbZ}{\mathbb{Z}}
\newcommand{\fA}{\mathfrak{A}}
\newcommand{\fe}{\mathfrak{e}}
\newcommand{\ii}{\mathfrak{i}}
\newcommand{\jj}{\mathfrak{j}}
\newcommand{\frakL}{\mathfrak{L}}
\newcommand{\fN}{\mathfrak{N}}
\newcommand{\fX}{\mathfrak{X}}
\newcommand{\fx}{\mathfrak{x}}
\newcommand{\fy}{\mathfrak{y}}
\newcommand{\cA}{\mathcal{A}}
\newcommand{\cB}{\mathcal{B}}
\newcommand{\cC}{\mathcal{C}}
\newcommand{\calD}{\mathcal{D}}
\newcommand{\cE}{\mathcal{E}}
\newcommand{\cF}{\mathcal{F}}
\newcommand{\cI}{\mathcal{I}}
\newcommand{\cM}{\mathcal{M}}
\newcommand{\cP}{\mathcal{P}}
\newcommand{\calR}{\mathcal{R}}
\newcommand{\cS}{\mathcal{S}}
\newcommand{\cT}{\mathcal{T}}
\newcommand{\cX}{\mathcal{X}}
\newcommand{\cY}{\mathcal{Y}}
\newcommand{\bE}{\mathbf{E}}
\newcommand{\bF}{\mathbf{F}}
\newcommand{\bG}{\mathbf{G}}
\newcommand{\Ab}{\operatorname{\bf Ab}}
\newcommand{\AbCat}{\operatorname{\bf AbCat}}
\newcommand{\Bl}{\operatorname{Bl}}
\newcommand{\Cat}{\operatorname{\bf Cat}}
\newcommand{\cf}{\textrm{cf.}\;}
\newcommand{\Ch}{\operatorname{\bf Ch}}
\newcommand{\Coh}{\operatorname{\bf Coh}}
\newcommand{\coker}{\operatorname{Coker}}
\newcommand{\colim}{\operatorname{colim}}
\newcommand{\End}{\operatorname{\bf End}}
\newcommand{\ExCat}{\operatorname{\bf ExCat}}
\newcommand{\Ext}{\operatorname{Ext}}
\newcommand{\fingen}{\operatorname{fg}}
\newcommand{\gr}{\operatorname{gr}}
\newcommand{\hdim}{\operatorname{hdim}}
\newcommand{\Hom}{\operatorname{Hom}}
\newcommand{\HOM}{\mathcal{HOM}}
\newcommand{\Homo}{\operatorname{H}}
\newcommand{\id}{\operatorname{id}}
\newcommand{\im}{\operatorname{Im}}
\newcommand{\isoto}{\overset{\scriptstyle{\sim}}{\to}}
\newcommand{\Kos}{\operatorname{Kos}}
\newcommand{\Lex}{\operatorname{\bf Lex}}
\newcommand{\linf}{\leftarrowtail}
\newcommand{\Mod}{\operatorname{\bf Mod}}
\newcommand{\nil}{\operatorname{nil}}
\newcommand{\onto}[1]{\stackrel{#1}{\to}}
\newcommand{\op}{\operatorname{op}}
\newcommand{\qis}{\operatorname{qis}}
\newcommand{\rdef}{\twoheadrightarrow}
\newcommand{\RelEx}{\operatorname{\bf RelEx}}
\newcommand{\rinc}{\hookrightarrow}
\newcommand{\rinf}{\rightarrowtail}
\newcommand{\Set}{\operatorname{\bf Set}}
\newcommand{\ssm}{\smallsetminus}
\newcommand{\Supp}{\operatorname{Supp}}
\newcommand{\tf}{\operatorname{tf}}
\def\sn{\smallskip\noindent}
\def\mn{\medskip\noindent}
\title{Homotopy invariance of higher $K$-theory for 
abelian categories}
\author{Satoshi Mochizuki \and 
Akiyoshi Sannai}
\date{}
\begin{document}
\maketitle

\begin{abstract}
The main theorem in this paper is that 
the base change functor from a noetherian abelian category $\cA$ to 
$\cA[t]$ the noetherian polynomial category of $\cA$, 
$-\otimes_{\cA}\bbZ[t]\colon\cA \to \cA[t]$ 
induces an isomorphism on $K$-theory. 
The main theorem implies the well-known fact 
that $\mathbb{A}^1$-homotopy invariance of $K'$-theory for noetherian schemes.

\sn
\footnotesize{{\bf Classification} 18E10, 19D35.\\
{\bf Keywords} higher $K$-theory \and Abelian categories.} 
\end{abstract}

\section{Introduction}
Contrary to the importance of $\mathbb{A}^1$-homotopy invariance 
in the motivic homotopy theory \cite{Voe98}, \cite{MV99} and \cite{Voe00}, 
the homotopy invariance of $K'$-theory for noetherian schemes still 
has been mysterious in the following sense. 
Recall the footstep of $K$-theory 
in the viewpoint of axiomatic characterization. 
The observation that the additivity theorem 
is the fundamental theorem of connective algebraic $K$-theory 
is implicit in Waldhausen \cite{Wal85} (See also \cite{GSVW92}) 
and was known to Grayson \cite{Gra87}, Staffeldt \cite{Sta89} and 
McCarthy \cite{McC93}. 
Recently the connective $K$-theory is regarded as 
the universal additive invariant 
by Tabuada \cite{Tab08} and Barwick \cite{Bar14}. 
After Thomason \cite{TT90}, 
derived invariance and localizing properties of non-connective $K$-theory 
are emphasized by many authors \cite{Nee92}, \cite{Kel99}, 
\cite{TV04}, \cite{Cis10}, \cite{BM11}, \cite{Sch06}, \cite{Sch11} and 
\cite{Moc13}. 
In the landscape of non-commutative motive theory \cite{CT11} 
or motive theory for $\infty$-categories 
\cite{BGT13}, 
non-connective $K$-theory is the universal localizing invariant. 
To relate motivic homotopy theory with motive theory 
for DG or $\infty$-categories, 
it is important to make clear 
that what additional axiom implies 
the homotopy invariance property. 
Many authors have already defined and studied 
affine lines over certain categories as in 
\cite{Alm74}, \cite{Alm78}, 
\cite{Gra77}, \cite{Haz83}, \cite{GM96}, \cite{Yao96} and \cite{Sch06}. 
(See also \cite{Tab14}). 
The main objective in this paper is 
to examine the homotopy invariance of 
$K$-theory for abelian categories by taking Schlichting polynomial categories. 
We recall the definition of the polynomial categories. 
For a category $\cC$, 
we let $\End \cC$ denote 
the {\it category of endomorphisms} 
in $\cC$. 
Namely, an object in $\End\cC$ 
is a pair $(x,\phi)$ 
consisting of an object $x$ in $\cC$ 
and a morphism $\phi:x\to x$ in $\cC$ 
and a morphism 
between $(x,\phi) \to (y,\psi)$ 
is a morphism 
$f:x \to y$ in $\cC$ such that 
$\psi f= f\phi$. 
(See Notation~\ref{nt:endcat}). 
From now on, let $\cA$ be an abelian category. 
We write $\Lex \cA$ for the category of left exact functors from 
$\cA^{\op}$ to $\Ab$ the category of abelian groups. 
The category $\Lex \cA$ is a Grothendieck abelian category and 
the Yoneda embedding $y:\cA \to \Lex \cA$ is exact and reflects 
exactness. 
We say an object $x$ in $\cA$ is 
{\it noetherian} 
if every ascending filtration of subobjects of $x$ is stational. 
We say $\cA$ is {\it noetherian} if every object in $\cA$ is noetherian. 
(See Definition~\ref{nt:noetherianobj}). 
We assume that $\cA$ is a noetherian abelian category and 
we write $\cA[t]$ for the full subcategory of 
noetherian objects 
in $\End \Lex\cA$ 
and 
call it the {\it noetherian polynomial category} 
over $\cA$. 
(See Definition~\ref{df:S-poly cat def}). 
We can prove that $\cA[t]$ is an abelian category. 
(See Lemma~\ref{lem:noetherian}). 
For an object $a$ in $\cA$, 
let us define an object $a[t](=(a[t],t))$ in $\End \Lex\cA$ as follows. 
The underlying object $a[t]$ is 
$\displaystyle{\bigoplus_{n=0}^{\infty}}at^i$ 
where $at^i$ is a copy of $a$. 
The endomorphism $t:a[t] \to a[t]$ is defined by 
the identity morphisms $at^i \to at^{i+1}$ in each components. 
We can prove that if $a$ is noetherian in $\cA$, then 
$a[t]$ is noetherian in $\cA[t]$. (See Theorem~\ref{thm:Abst Hilb basis}). 
We call the association $-\otimes_{\cA}\bbZ[t]:\cA \to \cA[t]$, 
$a \mapsto a[t]$ the {\it base change functor} which is an exact functor. 
One of the consequence of the main theorem is the following:

\begin{thm}[\bf A part of Theorem~\ref{thm:abstract main thm}]
\label{thm:mainthm} 
Let $\cA$ be a noetherian abelian category. 
The functor $-\otimes_{\cA} \bbZ[t]:\cA \to \cA[t]$ induces 
a homotopy invariance of spectra on $K$-theory
$$K(\cA)\isoto K(\cA[t]).$$
\end{thm}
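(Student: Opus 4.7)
The plan is to use a Koszul-type resolution of every noetherian object of $\cA[t]$ by objects in the image of $F := -\otimes_\cA \bbZ[t]$, and then apply Quillen's resolution theorem.

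The starting point is the universal short exact sequence
\begin{equation*}
0 \to (y[t], t) \xrightarrow{\,t - \psi[t]\,} (y[t], t) \to (y, \psi) \to 0
\end{equation*}
in $\End \Lex \cA$, valid for any $(y, \psi) \in \End\Lex\cA$. Here $\psi[t]$ is the component-wise extension of $\psi$ to $y[t] = \bigoplus_{i \geq 0} y t^i$, which commutes with the shift $t$, so $t - \psi[t]$ is a morphism in $\End\Lex\cA$; injectivity is a leading-coefficient check, and the cokernel identifies with $(y, \psi)$ via $y_i t^i \mapsto \psi^i(y_i)$.

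To bring this sequence into $\cA[t]$ (where $(y[t], t)$ may fail to be noetherian when $y \notin \cA$), I would use the noetherianness of $(y, \psi)$ to choose a noetherian subobject $a \in \cA$ of $y$ (in $\Lex\cA$) whose $\psi$-closure equals $y$; such $a$ exists by the ascending chain condition on $\psi$-invariant subobjects, since $y$ is the directed union of its noetherian subobjects in $\Lex\cA$ and the $\psi$-closures form an ascending chain in $\cA[t]$. This yields a surjection $F(a) \twoheadrightarrow (y, \psi)$ in $\cA[t]$ whose kernel is noetherian by the abstract Hilbert basis theorem (Theorem~\ref{thm:Abst Hilb basis}). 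With $a$ chosen carefully (for instance to include a lift of generators of $\coker\psi \in \cA$), I expect the Koszul identity to be preserved on the kernel, giving a length-one resolution $0 \to F(a') \to F(a) \to (y, \psi) \to 0$ in $\cA[t]$, with connecting morphism of the form $t - \widetilde\psi$ for a suitable lift $\widetilde\psi$ of the action.

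The main obstacle is confirming this finite-length assertion: in the classical analog of $R[t]$-modules, finite resolution length would require regularity of $R$, which is not assumed here. The expectation is that the specific Koszul structure $t - \widetilde\psi$ produces a length-one resolution unconditionally; verifying this compatibility between the choice of noetherian generator $a$ and the shift-action is the crux of the argument. Once the length-one resolution is in hand, the extension-closure of the essential image of $F$ is a resolving subcategory of $\cA[t]$, so Quillen's resolution theorem yields $K(F(\cA)) \isoto K(\cA[t])$; a retraction via $(y, \psi) \mapsto [\coker\psi] - [\ker\psi]$ (well-defined into $K(\cA)$, since both $\coker\psi$ and $\ker\psi$ lie in $\cA$ by the noetherianness of $(y, \psi)$) then identifies $K(\cA)$ with $K(F(\cA))$, completing the proof.
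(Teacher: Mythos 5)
Your proof proposal takes a genuinely different route from the paper. The paper works through the $2$-graded category $\cA'_{\gr}[2]$: it proves $\cA'_{\gr}[2]/\cA'_{\gr,\nil}[2]\simeq\cA[t]$ (Theorem~\ref{prop:cannonical isom}), computes $\frakL(\cA'_{\gr,\nil}[2])$ by d\'evissage via $\cA'_{\gr}[1]$ (Proposition~\ref{prop;devissage}) and $\frakL(\cA'_{\gr}[n])$ by a characteristic-filtration argument (Theorem~\ref{cor:canisomoftf}), and splices these into a localization triangle that Lemma~\ref{lem:split exact seq} then splits. There is no resolution-theorem argument anywhere in the paper, and I do not believe one can be made to work without essentially reproducing that machinery.

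The step you yourself flag as ``the crux'' is a genuine gap. After choosing a noetherian $a$ as you describe, you obtain a deflation $F(a)\rdef(y,\psi)$ with noetherian kernel $N$, but you supply no argument that $N$ lies in the essential image of $F:=-\otimes_{\cA}\bbZ[t]$. One can verify that $t$ acts injectively on $N$ (it is a subobject of $F(a)$) and that $N/tN$ is noetherian, being assembled from subobjects of $a$ and from $\ker\psi\in\cA$; but a $t$-torsion-free noetherian object of $\End\Lex\cA$ with noetherian reduction modulo $t$ need not be of the form $a'[t]$ --- that would require the filtration $N\supset tN\supset t^2N\supset\cdots$ to split $\cA$-linearly, which is a nontrivial extension problem in $\Lex\cA$, and your ``Koszul identity'' does not resolve it because the lift of $\psi$ need not carry the chosen $a$ back into itself. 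This is precisely the obstruction that pushes Quillen's proof of the fundamental theorem for $G$-theory, and the present paper's proof, through graded objects, localization, and d\'evissage rather than through the resolution theorem; the resolution theorem reappears only when $\cA$ is regular, as in Corollary~\ref{cor:mainthm}, and there it is applied to $\cP_{\cA}\rinc\cA$, not to the base change $\cA\to\cA[t]$. Absent a proof that the kernel is extended (or a finite, rather than length-one, extended resolution together with the closure hypotheses of the resolution theorem for the extension-closure of $F(\cA)$), the argument does not close. A secondary remark: the retraction should be the exact functor $(y,\psi)\mapsto\coker\psi$ on the extension-closure of the image of $F$, where $\ker\psi$ vanishes automatically; the virtual class $[\coker\psi]-[\ker\psi]$ lives only at the level of $K_0$ and does not by itself define a map of spectra.
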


The key idea of how to prove the main theorem is, 
roughly speaking, 
that we recognize an affine space as to be a rudimental projective space
$$\bbA^n=\bbP^n\ssm\{[x_0:\cdots:x_n]\in\bbP^n;x_n=0 \}(=\bbP^n\ssm\bbP^{n-1}).$$
(Compare the equation above with the formula $\mathrm{(2)}$ below). 
To give more precise explanation, 
for a scheme $X$ which has an ample family of line bundles and 
a closed subset $Y$ of $X$, 
we write $[X^Y]$ and $K(\Coh_YX)$ for the derived category and the 
non-connective $K$-theory 
of bounded complexes of coherent sheaves $E^{\bullet}$ on $X$ such that 
$\displaystyle{\underset{i}{\bigcup} \Supp \Homo^i(E^{\bullet}) \subset Y}$ 
respectively 
and denote $[X^X]$ and $K(\Coh_X X)$ by $[X]$ and $K(\Coh X)$ respectively. 
Then the following three formulas imply $\bbA^1$-homotopy invariance 
of $K$-theory of coherent sheaves over noetherian schemes:\\
$\mathrm{(1)}$ 
{\bf (Derived projective bundle formula).} 
$[\bbP^n_X]/[\bbP^{n-1}_X]\isoto [X]$.\\
$\mathrm{(2)}$ 
{\bf (Localization formula).} 
${[\bbP^n_X]/[{(\bbP^n_X)}^{\bbP^{n-1}_X}] } \isoto [\bbA^n_X]$.\\
$\mathrm{(3)}$ 
{\bf (Purity).} 
We have the isomorphism 
$$K(\Coh_{\bbP^{n-1}_X}\bbP^n_X)\isoto K(\Coh \bbP^{n-1}_X).$$
In this paper, we trace parallel argumetns above in categorical setting. 
Projective spaces are replaced with graded categories over categories which 
is introduced in \S 3. 
The formulas $\mathrm{(1)}$ and $\mathrm{(2)}$ above correspond to 
Theorem~\ref{cor:canisomoftf} and 
Theorem~\ref{prop:cannonical isom} respectively. 
Finally the formula $\mathrm{(3)}$ above is replaced with 
Proposition~\ref{prop;devissage} which is 
a consequence of the d{\'e}vissage theorem. 
A geometric meaning of the d{\'e}vissage theorem in the view of 
categorical algebraic geometry 
based on the support varieties theory as in 
\cite{Bal07}, \cite{BKS07} and \cite{Gar09} 
will be studied 
in the first author's subsequent papers. 
See \S \ref{subsec:nilp inv th} for 
an ad hoc axiomization of d{\'e}vissage property. 
In the final subsection, we propose a generalized Vorst problem. 
Recall that 
Vorst conjecture in \cite{Vor79} which says 
that for any affine scheme $X$, 
$\bbA^1$-homotopy invariance of $K$-theory for $X$, 
characterizes the regularilty of $X$, 
has been recently proved in \cite{CHW08} and \cite{GH12}. 
To attack generalized Vorst conjecture, 
the first author hope to extends the arguments in Ibid to 
categorical algebraic geometry setting. 

\begin{conv} 
In this note, 
basically we follow the notation of 
exact categories for \cite{Kel90} and 
algebraic $K$-theory for 
\cite{Qui73} and \cite{Wal85}. 
For example, we call admissible monomorphisms 
(resp. 
admissible epimorphisms and admissible short exact sequences) 
inflations (resp. deflations, conflations). 
We also call a category with cofibrations and weak equivalences 
a Waldhausen category. 
Let us denote the set of all natural numbers by $\bbN$. 
We regard it as a totally ordered set 
with the usual order. 
For a Waldhausen category, 
we denote the specific zero object 
by the same letter $\ast$. 
We denote the $2$-category of essentially small categories by $\Cat$, 
the category of sets by $\Set$ and
the category of essentially small abelian (resp. exact) categories 
by $\AbCat$ (resp. $\ExCat$). 
For any non-negative integer $n$, 
we denote the set of all integers 
$k$ such that $0\leq k \leq n$ by $[n]$. 
For categories $\cX$, $\cY$, 
we denote the (large) category of 
functors from $\cX$ to $\cY$ by 
$\HOM(\cX,\cY)$. 
For any ring with unit $A$, 
we denote the category of 
right $A$-modules (resp. finitely generated right $A$-modules) 
by $\Mod(A)$ (resp. $\cM_A$). 
Throughout the paper, 
we use the letter $\cA$ to denote an essentially small abelian category. 
For an object $x$ in $\cA$ and a finite family 
$\{x_i\}_{1\leq i\leq m}$ 
of subobjects of $x$, $\sum_{i=1}^m x_i$ means 
the minimum subobject of $x$ which contains all $x_i$. 
For an additive category $\cB$, 
we write $\Ch(\cB)$ for 
the category of chain complexes on $\cB$. 
\end{conv}

\section{Polynomial categories}
\label{sec:polycat}

In this section, we recall the notation of 
polynomial abelian categories from \cite{Sch00} or \cite{Sch06}.

\subsection{End categories}

\begin{df}
\label{nt:endcat}
For a category $\cC$, 
we denote 
the {\it category of endomorphisms} 
in $\cC$ by $\End \cC$. 
Namely, an object in $\End\cC$ 
is a pair $(x,\phi)$ 
consisting of an object $x$ in $\cC$ 
and a morphism $\phi:x\to x$ in $\cC$ 
and a morphism 
between $(x,\phi) \to (y,\psi)$ 
is a morphism 
$f:x \to y$ in $\cC$ such that 
$\psi f= f\phi$. 
For any functor $F:\cC \to \cC'$, 
we have a functor
$\End F:\End \cC \to \End \cC'$
which sends 
$(x,\phi)$ to $(Fx,F\phi)$. 
Moreover for any 
natural transformation $\theta:F\to F'$ 
between functors $F$, $F:\cC \to \cC'$, 
we have a natural transformation 
$\End \theta:\End F \to \End F'$ 
defined by the formula 
$\End \theta (x,\phi):=\theta(x)$ 
for any object $(x,\phi)$ in $\End \cC$. 
This association gives a $2$-functor
$$\End:\Cat \to \Cat.$$
We have natural transformations 
$i:\id_{\Cat} \to \End$ and 
$U:\End \to \id_{\Cat}$ defined by 
$i(\cC):\cC \to \End \cC$, 
$x\mapsto (x,\id_x)$ and 
$U(\cC):\End \cC \to \cC$, 
$(x,\phi) \mapsto x$ for each category $\cC$.  
\end{df}

\begin{rem}
\label{rem:limitinendcat}
Let $\cC$ be a category and 
$F:\cI \to \End \cC$, 
$i\mapsto (x_i,\phi_i)$ be a functor. 
Let us assume that 
there is a limit $\lim x_i$ 
(resp. colimit $\colim x_i$) in $\cC$. 
Then we have $\lim F_i=(\lim x_i,\lim \phi_i)$ 
(resp. $\colim F_i=(\colim x_i,\colim \phi_i)$). 
In particular, 
if $\cC$ is additive (resp. abelian), 
then $\End \cC$ is also additive (resp. abelian). 
Moreover if $\cC$ is an exact category 
(resp. a category with cofibration), 
then $\End \cC$ naturally becomes an exact category 
(resp. a category with cofibration). 
Here a sequence 
$(x,\phi) \to (y,\psi) \to (z,\xi)$ is a conflation
if and only if $x \to y \to z$ is a conflation in $\cC$. 
(resp. a morphism $(x,\phi) \onto{u} (y,\psi)$ is a cofibration 
if and only if 
$u:x\to y$ is a cofibration in $\cC$.) 
Moreover if $w$ is 
a class of morphisms in $\cC$ 
which satisfies the axioms of 
Waldhausen categories 
(and its dual), 
then the class of all morphisms 
in $\End \cC$ 
which is in $w$ also satisfies 
the axioms of 
Waldhausen categories (and its dual). 
\end{rem}

\begin{rem}
\label{rem:GMnotation}
In \cite[III. 5.15]{GM96}, 
for a category $\cC$, 
the category $\End \cC$ is called 
the {\it polynomial category} over $\cC$ 
and denoted by $\cC[T]$. 
For any ring with unit $A$, 
we have the canonical category isomorphism 
$$\Mod(A[T]) \isoto (\Mod(A))[T],\ \ M \mapsto (M,T)$$
where $A[T]$ is 
the polynomial ring over $A$ 
and $T$ means an endomorphism 
$T:M \to M$ which sends an element $x$ in $M$ to an element $xT$ in $M$. 
Moreover in general 
for any abelian category $\cA$, 
we have the equality 
$$\hdim \cA[T]=\hdim \cA +1$$
where $\hdim \cA$ is the 
{\it homological dimension} 
of $\cA$ which is defined by 
$$\hdim \cA:=\max\{n; \Ext^{n}(x,y)\neq 0
 \ \ \ \text{for any objects $x$, $y$}\}.$$
But obviously for any right noetherian ring $A$, 
$(\cM_A)[T]$ and $\cM_{A[T]}$ are different categories. 
The main reasons is that 
$A[T]$ is not finitely generated as an $A$-module. 
In particular, 
the object $(A[T],T)$ is in $(\Mod(A))[T]$ 
but not in $(\cM_A)[T]$. 
In the subsection~\ref{subsec:Sch poly cat}, 
we define the noetherian polynomial categories 
over noetherian abelian category which is introduced by 
Schlichting in \cite{Sch06}. 
In this notion, 
we have the canonical category equivalence 
between $\cM_{A[t]}$ and $(\cM_A)[t]$. 
See Example~\ref{ex:S-poly}.
\end{rem}

\subsection{Noetherian objects}
\label{subsec:Noeobj}

In this subsection, 
we develop 
the theory of noetherian objects 
in exact categories 
which is slightly different from 
the usual notation in the category theory.

\begin{df}
\label{nt:noetherianobj}
Let $\cE$ be an exact category 
and $x$ an object in $\cE$. 
We say $x$ is a {\it noetherian object} if 
any ascending filtration of admissible subobjects of $x$ 
$$x_0\rinf x_1 \rinf x_2 \rinf \cdots$$
is stational. 
We say $\cE$ is a {\it noetherian} category if all objects 
in $\cE$ are noetherian. 
\end{df}

We can easily prove the following lemmata.

\begin{lem}
\label{lem:noetherian}
Let $\cE$ be an exact category. 
Then\\
$\mathrm{(1)}$ 
Let $x \rinf y \rdef z$ 
be a conflation in $\cE$. 
If $y$ is noetherian, then $x$ and $z$ are also noetherian.\\
$\mathrm{(2)}$ 
For noetherian objects $x$, $y$ in $\cE$, 
$x\oplus y$ is also noetherian.\\
$\mathrm{(3)}$ 
Moreover assume that $\cE$ is abelian, 
then the converse of 
$\mathrm{(1)}$ is true. 
Namely, in the notation $\mathrm{(1)}$, 
if $x$ and $z$ are noetherian, then $y$ is also noetherian.
\qed
\end{lem}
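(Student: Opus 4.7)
The plan is to dispatch (1) directly from the pullback/pushout axioms of an exact category, prove (3) by the usual intersection-and-image argument in the abelian setting, and reduce (2) to (3) via the split conflation $x \rinf x\oplus y \rdef y$. For (1) and the subobject: any ascending chain of admissible subobjects of $x$ composes with $x \rinf y$ to give an ascending chain of admissible subobjects of $y$, which stabilizes. For (1) and the quotient $z$: given a chain $z_0 \rinf z_1 \rinf \cdots$ in $z$, pull each $z_i \rinf z$ back along the admissible epi $y \rdef z$ to obtain admissible subobjects $y_i \rinf y$ containing $x$; the chain of the $y_i$ stabilizes by hypothesis, and passing to cokernels recovers stabilization of the $z_i$.

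For (3), assume $\cE$ is abelian. Given an ascending chain $y_0 \rinf y_1 \rinf \cdots$ of subobjects of $y$, form $x_i := y_i \cap x$ and $z_i := \im(y_i \to z)$. These are ascending chains in $x$ and $z$ respectively, so by hypothesis both stabilize. For $i$ large enough that $x_i = x_{i+1}$ and $z_i = z_{i+1}$, the Five Lemma applied to the comparison of the two short exact sequences $0 \to x_i \to y_i \to z_i \to 0$ and $0 \to x_{i+1} \to y_{i+1} \to z_{i+1} \to 0$ (with the outer vertical maps being identities) forces $y_i = y_{i+1}$, so the chain stabilizes.

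Part (2) then follows by applying (3) to the split conflation $x \rinf x\oplus y \rdef y$ in the abelian case. In the general exact-category setting the same skeleton works: one forms $z_i \cap x$ via pullback of admissible monics (which exists in any exact category) and a suitable admissible image of $z_i$ in $y$ using the split admissible epi $x\oplus y \rdef y$. The main obstacle is precisely this last step --- in a general exact category the ``image'' of an admissible subobject under an admissible epi is not automatically admissible, and to legitimize the argument one would typically invoke the Gabriel--Quillen embedding of $\cE$ into its abelian envelope, inside which the proof of (3) runs verbatim and then descends back to $\cE$ because noetherianity is tested on chains of admissible subobjects in $\cE$. Since the rest of the paper works almost exclusively with noetherian abelian categories, this subtlety does not obstruct any of the applications.
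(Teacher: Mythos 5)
Parts $\mathrm{(1)}$ and $\mathrm{(3)}$ are correct and use the expected arguments; the paper states this lemma without proof (only the surrounding remark that the lemmata are ``easily proved''), so there is no in-paper proof to compare against.

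The gap is in $\mathrm{(2)}$. Reducing $\mathrm{(2)}$ to $\mathrm{(3)}$ via the split conflation $x\rinf x\oplus y\rdef y$ works only once $\mathrm{(3)}$ is available, i.e.\ only when $\cE$ is abelian, whereas $\mathrm{(2)}$ is asserted for an arbitrary exact category. Both of the patches you sketch for the general case fail. The intersection $w_i\cap x$ is not, as you claim, a ``pullback of admissible monics which exists in any exact category'': the axioms give pullbacks of admissible epis along arbitrary maps (and, as a consequence, pullbacks of admissible monos along admissible \emph{epis}, which is exactly what makes the quotient half of $\mathrm{(1)}$ legitimate), but they do \emph{not} give the pullback of an admissible mono along another admissible mono, which is what $w_i\times_{x\oplus y}x$ would be; nor can one manufacture it as a kernel or equalizer, since those do not exist in a general exact category either. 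And the Gabriel--Quillen embedding $\cE\rinc\Lex\cE$ does not rescue the argument in the direction you need: to run the abelian proof of $\mathrm{(3)}$ inside $\Lex\cE$ you must know that $x$ and $y$ are noetherian \emph{there}, but noetherianity in $\cE$ (tested on admissible subobjects only) is strictly weaker than noetherianity in $\Lex\cE$ (tested on all subobjects of the Grothendieck category); the implication runs the other way, which is precisely Lemma~\ref{lem:faithfulexact}. You are right that every invocation of Lemma~\ref{lem:noetherian}~$\mathrm{(2)}$ in the paper occurs inside an abelian category, so nothing downstream is at risk; but as written your proposal does not establish $\mathrm{(2)}$ at the stated level of generality, and one would need a genuinely different argument, or an added hypothesis such as weak idempotent completeness together with more care, to do so.
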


\begin{lem}
\label{lem:faithfulexact}
For any exact faithful functor $F:\cA \to \cB$ between 
abelian categories and an object $x$ in $\cA$, 
if $Fx$ is noetherian, then $x$ is also noetherian. 
\qed
\end{lem}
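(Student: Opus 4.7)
The plan is to pull back an ascending chain of admissible subobjects of $x$ to one of $Fx$, use that $Fx$ is noetherian to stabilize there, and then transport stabilization back to $\cA$ via faithfulness. Concretely, given a chain
\[
x_0 \rinf x_1 \rinf x_2 \rinf \cdots
\]
of admissible subobjects of $x$ in $\cA$, I would first observe that since $F$ is exact it preserves inflations, so the induced sequence $Fx_0 \rinf Fx_1 \rinf Fx_2 \rinf \cdots$ is an ascending chain of admissible subobjects of $Fx$ in $\cB$. The hypothesis that $Fx$ is noetherian then gives an integer $N$ such that $Fx_n \rinf Fx_{n+1}$ is an isomorphism for all $n \geq N$.

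The second step is to promote this stabilization in $\cB$ to one in $\cA$. Because $\cA$ is abelian, the monomorphism $x_n \rinf x_{n+1}$ is an isomorphism iff its cokernel $c_n := x_{n+1}/x_n$ is zero. Applying $F$ and using exactness, $Fc_n$ is the cokernel of $Fx_n \rinf Fx_{n+1}$, hence $Fc_n = 0$ for $n \geq N$. So everything reduces to the following sublemma which I would handle first: if $F\colon \cA \to \cB$ is an exact faithful functor between abelian categories and $Fy = 0$, then $y = 0$. This follows because any exact functor between abelian categories is additive, so $F(\id_y) = \id_{Fy} = 0 = F(0_y)$, and faithfulness forces $\id_y = 0_y$, i.e.\ $y \cong 0$.

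Combining the two steps, $c_n = 0$ for $n \geq N$, so the original chain stabilizes in $\cA$, proving that $x$ is noetherian. I do not expect any real obstacle; the only point that needs a moment of care is the standard fact that exact faithful functors between abelian categories reflect zero objects, and this is what justifies the passage from the already established stabilization of the chain $(Fx_n)$ to that of $(x_n)$.
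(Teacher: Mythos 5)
Your proof is correct, and since the paper omits the argument as an easy exercise (the lemma appears with only a ``\qed''), this is precisely the expected approach: push the chain forward with the exact functor, stabilize using noetherianity of $Fx$, and pull stabilization back via the standard fact that an exact faithful functor between abelian categories reflects zero objects (hence reflects isomorphisms of subobjects). All the steps you flag as needing care --- that $F$ preserves monomorphisms since it is exact, that $Fc_n$ computes the cokernel, and that $Fy=0$ forces $y=0$ by comparing $F(\id_y)$ with $F(0_y)$ --- are exactly right.
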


\subsection{Grothendieck category}
\label{subsec:Groth cat}

In this subsection, we briefly review the notion of 
Grothendieck categories. 

\begin{df}[\bf Generator]
\label{df:Generator}
An object $u$ in a category $\cC$ is said to be 
a {\it generator} if 
the corepresentable functor $\Hom(u,-):\cC \to \Set$ 
associated with $u$ is faithful. 
\end{df}

\begin{df}[\bf finite type] 
\label{df:finite type} 
Let $\cB$ be an additive category and $x$, $y$ objects in $\cB$. 
We say that $y$ is {\it of $x$-finite type} ({\it in $\cB$}) if 
there exists a positive integer $n$ and an epimorphism 
$x^{\oplus n} \rdef y$ in $\cB$. 
\end{df}

\begin{ex}
\label{ex:finite type} 
Let $R$ be a ring with unit. 
An object $M$ in $\Mod(R)$ is a finitely generated $R$-module 
if and only if $M$ is of $R$-finite type. 
\end{ex}

\begin{lem}
\label{lem:generator}
$\mathrm{(1)}$ 
Let $f:\cB \to \cC$ be an exact functor from an abelian category $\cB$ 
to an exact category $\cC$ 
and $x$, $y$ objects in $\cB$. 
If $y$ is of $x$-finite type, 
then $f(y)$ is of $f(x)$-finite type.\\
$\mathrm{(2)}$ 
Let $\cB$ be an abelian category which has an generator $u$. 
Then any noetherian objects in $\cB$ are of $u$-finite type. 
\end{lem}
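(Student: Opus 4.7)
The plan is to handle the two parts separately, with part (1) being a short observation and part (2) being the substantive argument combining the universal property of a generator with the noetherian hypothesis.

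For part (1), the idea is simply that in an abelian category every epimorphism is a deflation (an admissible epimorphism), and exact functors preserve deflations (hence epimorphisms into an exact category). So from a chosen epimorphism $x^{\oplus n}\rdef y$ in $\cB$ I would apply $f$ and use that $f$ commutes with finite direct sums and with cokernels to produce a deflation $f(x)^{\oplus n}=f(x^{\oplus n})\rdef f(y)$ in $\cC$. This immediately witnesses $f(y)$ as being of $f(x)$-finite type. No real obstacle here.

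For part (2), let $x$ be a noetherian object in $\cB$ and consider the family $\Sigma$ of subobjects of $x$ of the form $\im(\phi)$ where $\phi\colon u^{\oplus n}\to x$ ranges over all morphisms with $n\in\bbN$. First I would observe that $\Sigma$ is directed: given $\im(\phi_1)$ and $\im(\phi_2)$ with $\phi_i\colon u^{\oplus n_i}\to x$, the sum $\phi_1\oplus\phi_2\colon u^{\oplus(n_1+n_2)}\to x$ has image containing both. Since every subobject in the abelian category $\cB$ is admissible and $x$ is noetherian, $\Sigma$ admits a maximal element $y_0=\im(\phi_0)$ for some $\phi_0\colon u^{\oplus N}\to x$.

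The key step is then to show $y_0=x$. Suppose for contradiction that the inclusion $y_0\rinf x$ is proper, so that the projection $p\colon x\rdef x/y_0$ is a nonzero morphism. By hypothesis $\Hom(u,-)$ is faithful, and in the additive setting this means that a morphism $\psi$ in $\cB$ is zero if and only if $\psi\circ g=0$ for every $g\colon u\to\psi$'s source. Applied to $p\neq 0$ this yields some $g\colon u\to x$ with $pg\neq 0$, so that $\im(g)$ is not contained in $y_0$. Combining $\phi_0$ with $g$ gives a morphism $u^{\oplus(N+1)}\to x$ whose image strictly contains $y_0$, contradicting maximality. Hence $y_0=x$, producing the required deflation $u^{\oplus N}\rdef x$.

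The only delicate point is the use of faithfulness to force the existence of a morphism $u\to x$ escaping a given proper subobject; once one rephrases faithfulness of $\Hom(u,-)$ in its additive form (nothing maps to zero from $u$ unless it is zero) and applies it to the projection onto the quotient, the rest is formal. Directedness of $\Sigma$ plus noetherianity on admissible subobjects, as guaranteed by Definition~\ref{nt:noetherianobj}, closes the argument.
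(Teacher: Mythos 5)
Your part (1) is the same observation the paper makes. For part (2), your route is genuinely different in structure from the paper's, and arguably tidier. The paper first forms the full coproduct $\bigoplus_{\lambda\in\Hom(u,x)}u_\lambda\to x$, proves it is an epimorphism from faithfulness of $\Hom(u,-)$, and then tries to cut down to a finite sub-coproduct by choosing an arbitrary $\bbN$-indexed injection into $\Hom(u,x)$ and invoking the ascending chain condition on the partial images. You instead never form an infinite coproduct at all: you take the directed set of all images of morphisms $u^{\oplus n}\to x$, extract a maximal element $y_0$ by noetherianity, and use faithfulness applied to the projection $x\rdef x/y_0$ to show any proper $y_0$ can be strictly enlarged. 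This "escape a proper subobject" step is precisely what is needed to guarantee that the finite sub-coproduct actually surjects, a point the paper's argument leaves slightly implicit when $\Hom(u,x)$ is uncountable (its chosen countable chain stabilizes, but one must still argue the stable value is all of $x$). Your version also makes no appeal to infinite coproducts in $\cB$ (which the paper's construction needs, although this is harmless in the intended application where $\cB=\Lex\cA$ is Grothendieck). One cosmetic slip: writing $\phi_1\oplus\phi_2$ for the map $u^{\oplus(n_1+n_2)}\to x$ induced by $\phi_1$ and $\phi_2$ is imprecise — you mean the codiagonal composite $(\phi_1,\phi_2)$, not the direct sum map into $x\oplus x$ — but the intent is unambiguous and the argument is sound.
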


\begin{proof}
$\mathrm{(1)}$ 
There exists a positive integer $n$ and an epimorphim 
$p:x^{\oplus n} \rdef y$. 
Then we have an epimorphism $f(p):f(x)^{\oplus n} \rdef f(y)$. 
Hence $f(y)$ is of $f(x)$-finite type. 

\sn 
$\mathrm{(2)}$ 
Let $x$ be a noetherian object in $\cB$ and we put 
$\Lambda:=\Hom(u,x)$. 
For any $\lambda \in \Lambda$, 
we write $u_\lambda$ for a copy of $u$. 
Then $\{\lambda:u_\lambda \to x\}_{\lambda\in \Lambda}$ 
induces a morphsim 
$\displaystyle{p:\bigoplus_{\lambda\in\Lambda}u_{\lambda} \to x}$. 
\begin{claim}
$p$ is an epimorphism. 
\end{claim}
\begin{proof}[Proof of claim]
Let $\alpha:x \to y$ be a non-zero morphism in $\cB$. 
Since $u$ is a generator, 
$\Hom(u,\alpha)$ is a non-zero map. 
Therefore there exists a morphism $\lambda_0:u_{\lambda_0} \to x$ 
such that $\alpha\lambda_0\neq 0$. 
In particular $\alpha p\neq 0$ and $p$ is an epimorphism. 
\end{proof}

\sn
If $\Lambda$ is a finite set, 
then we get the desired result. 
If $\Lambda$ is an infinite set, then 
there exists an injection $\omega:\bbN \to \Lambda$. 
We put $\displaystyle{x_n=p(\bigoplus_{\alpha\in \omega([n])} u_{\alpha})}$ 
where $[n]$ is the set $\{0,1,\cdots,n\}$. 
Then the family $\{x_n\}_{n\in\bbN}$ is an asscending chain of subobjects 
of a noetherian object $x$ and therefore it is stational. 
Say $x_k=x_{k+1}=\cdots$. 
Then the restriction of $p$ to 
$\displaystyle{\bigoplus_{\alpha\in\omega([k])}u_{\alpha}}$, 
$\displaystyle{\bigoplus_{\alpha\in\omega([k])}u_{\alpha} \to x}$ 
is an epimorphism. 
\end{proof}

\begin{df}[\bf Grothendieck category] 
\label{df:Grothendieck category} 
We say that an abelian category $\cB$ is {\it Grothendieck} if 
the following conditions hold.\\
$\mathrm{(1)}$ 
$\cB$ has a generator.\\
$\mathrm{(2)}$ 
$\cB$ is {\it cocomplete}. 
Namely for any small category $\cI$, 
we define the diagonal functor $\Delta_{\cI}:\cB \to \HOM(\cI,\cB)$ 
by sending an object $x$ in $\cB$ to a constant functor 
$\cI \to \cB$ which sends all objects in $\cI$ to $x$ and  
all morphisms in $\cI$ to $\id_x$. 
Then $\Delta_{\cI}$ admits a left adjoint functor 
$\colim_{\cI}:\HOM(\cI,\cB) \to \cB$.\\
$\mathrm{(3)}$ 
All small direct limits in $\cB$ is exact. 
Nameley for any filtered small category $\cI$, 
the colimit functor $\colim_{\cI}:\HOM(\cI,\cB) \to \cB$ is exact. 
\end{df}

\begin{para}
\label{para:GQ-emmbeding}
For an essentially small exact category $\cE$, 
we denote the category of left exact functors from 
$\cE^{\op}$ to the category of abelian groups $\Ab$ by 
$\Lex \cE$. 
It is well-known that 
the category $\Lex \cE$ is a Grothendieck category and 
the Yoneda embedding $y:\cE \to \Lex \cE$ 
which sends $x$ to the representable functor associated with $x$, 
$\Hom(-,x):\cE^{\op} \to \Ab$ is exact and reflects 
exactness. (cf. \cite[A.7.1, A.7.5]{TT90}). 
For example, 
let $A$ be a ring with unit, 
then the composition of the Yoneda embedding 
$\Mod(A) \to \Lex \Mod(A)$ and the restriction 
$\Lex \Mod(A) \to \Lex \cM_A$ induced from the 
inclusion functor $\cM_A \rinc \Mod(A)$ 
is an equivalence 
$$\Mod(A)\isoto \Lex \cM_A$$
where the inverse functor is given by sending an object $F$ in $\Lex \cM_A$ 
to an object $F(A)$ in $\Mod(A)$. 
\end{para}

\begin{thm}[\bf Embedding theorem] 
\label{thm:embedd thm}
{\rm (cf. \cite{GP64}).} 
Let $\cB$ be a Grothendieck category with a generator $u$. 
We put $R:=\Hom_{\cB}(u,u)$. $R$ is a ring with unit by 
taking multiplication as composition of morphisms. 
Then the corepresentable functor 
$\Hom(u,-):\cB \to \Mod-R$ 
associated with $u$ 
is fully faithful. 
\qed 
\end{thm}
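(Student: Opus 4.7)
My plan is to follow the classical Gabriel-Popescu construction. I would build a left adjoint $G\colon\Mod(R)\to\cB$ to the functor $F:=\Hom_{\cB}(u,-)$ and then verify that the counit $\varepsilon\colon GF\to\id_{\cB}$ is a natural isomorphism, which is equivalent to $F$ being fully faithful.

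As a preparatory step, using that $u$ is a generator in a cocomplete abelian category, I would show every $M\in\cB$ admits a canonical presentation $u^{(J)}\to u^{(I)}\twoheadrightarrow M\to 0$: take $I:=\Hom_{\cB}(u,M)$ with the tautological map $p\colon u^{(I)}\to M$ sending the $g$-th summand to $g$. If $\im p$ were a proper subobject, the nonzero projection $M\to M/\im p$ would, by the generator axiom, admit a nonzero composite with some $u\to M$; but that morphism is by construction one of the coordinates of $p$, a contradiction. Iterating on $\ker p$ yields the presentation. Next I would define $G$ on free modules by $G(R^{(I)}):=u^{(I)}$ and on a general $N$ with presentation $R^{(J)}\to R^{(I)}\to N\to 0$ by setting $G(N):=\coker(u^{(J)}\to u^{(I)})$; the induced morphism $u^{(J)}\to u^{(I)}$ is well-defined because each column of an $R$-matrix has only finitely many nonzero entries. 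The adjunction $\Hom_{\cB}(G(N),M)\cong\Hom_R(N,F(M))$ then follows from the universal properties of coproducts and cokernels, checked first for free $N$ and then in general.

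The main content is to show that $\varepsilon_M\colon GF(M)\to M$ is an isomorphism. For epimorphism, the canonical surjection $p$ from the preparatory step corresponds under the adjunction $\Hom_{\cB}(G(R^{(F(M))}),M)\cong\Hom_R(R^{(F(M))},F(M))$ to the natural surjection $R^{(F(M))}\twoheadrightarrow F(M)$, and therefore factors as $u^{(F(M))}=G(R^{(F(M))})\to GF(M)\xrightarrow{\varepsilon_M}M$, forcing $\im\varepsilon_M=M$. Monomorphism is the delicate half: the triangle identity $F(\varepsilon_M)\eta_{F(M)}=\id_{F(M)}$ exhibits $F(\varepsilon_M)$ as a split epimorphism, and I would apply $F$ to the short exact sequence $0\to\ker\varepsilon_M\to GF(M)\to M\to 0$ and exploit the explicit description of $GF(M)$ as a cokernel of maps between coproducts of copies of $u$ to identify $\ker\varepsilon_M$ with an object annihilated by $F$; since $F$ is faithful (by the generator hypothesis), this forces $\ker\varepsilon_M=0$.

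I expect the monomorphism of the counit to be the chief obstacle. Because $u$ is not assumed to be compact, morphisms $u\to u^{(I)}$ need not be finite linear combinations of the inclusions $\iota_i$, so the naive matrix-based manipulation of presentations does not suffice, and one must invoke the cocompleteness of $\cB$ in an essential way to control the gap between the $R$-linear span in $F(u^{(I)})$ and the full set of morphisms $u\to u^{(I)}$. This is precisely the technical heart of the original Gabriel-Popescu argument in \cite{GP64}.
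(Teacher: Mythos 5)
The paper does not prove this theorem: it is cited to Gabriel--Popescu \cite{GP64} and closed immediately, so there is no internal argument of the paper's to compare yours against. Your outline is structurally the classical Gabriel--Popescu construction --- build the left adjoint $G=-\otimes_R u$, show the counit $\varepsilon$ is epi via the generator property, and reduce monicity of $\varepsilon_M$ to $F(\ker\varepsilon_M)=0$ using left exactness and faithfulness of $F$ --- and you correctly single out the right obstacle.

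That said, there is a genuine gap at precisely that obstacle, and the reduction you describe does not close it on its own. Applying $F$ to the exact sequence $0\to\ker\varepsilon_M\to GF(M)\to M\to 0$ and invoking the triangle identity shows only that $F(\varepsilon_M)$ is a \emph{split} epimorphism, hence $FGF(M)\cong FM\oplus F(\ker\varepsilon_M)$; this gives no information about $F(\ker\varepsilon_M)$ by itself. Proving $F(\ker\varepsilon_M)=0$, equivalently that the split monomorphism $\eta_{FM}$ is an isomorphism, is exactly the AB5-dependent heart of the theorem: one must show that a morphism $u\to u^{(I)}$ landing in a subobject $K\subseteq u^{(I)}$ can, after restriction to each finite subset of $I$, be matched by an $R$-linear combination landing in $F(K)$, and then pass to a filtered colimit to lift this to a global factorization through $K$. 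Without that lemma the argument is circular --- you would be assuming the very closure property of $FM$ you are trying to establish. You are candid that this is the technical heart of \cite{GP64} and that non-compactness of $u$ is what makes it hard; the diagnosis is right, but the proposal stops at a pointer rather than a proof of the step on which the theorem actually turns.
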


\begin{cor}
\label{cor:embedd thm}
Let $\cA$ be an essentially small noetherian abelian category. 
Then there exists a ring with unit $R_{\cA}$ and 
an exact fully faithful functor $i_{\cA}:\cA \to \cM_{R_{\cA}}$. 
\end{cor}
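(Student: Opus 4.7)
The plan is to combine the Yoneda embedding into the Grothendieck category $\Lex \cA$ with the Embedding Theorem~\ref{thm:embedd thm}, then use the noetherian hypothesis to cut the image down to $\cM_{R_{\cA}}$. Recall from~\ref{para:GQ-emmbeding} that $\Lex \cA$ is Grothendieck and $y : \cA \to \Lex \cA$ is exact and fully faithful. Since $\cA$ is essentially small, I pick a small skeleton $\{a_i\}_{i \in I}$ of $\cA$ and form $u := \bigoplus_{i \in I} y(a_i)$ in $\Lex \cA$, which is a generator (every object of $\Lex \cA$ is a colimit of representables, hence admits an epimorphism from a direct sum of copies of $u$). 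Setting $R_{\cA} := \End_{\Lex \cA}(u)$, Theorem~\ref{thm:embedd thm} furnishes a fully faithful functor $\Phi := \Hom_{\Lex \cA}(u, -) : \Lex \cA \to \Mod(R_{\cA})$, and the composite $i_{\cA} := \Phi \circ y$ is then fully faithful by composition.

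For the essential image of $i_{\cA}$ to land in $\cM_{R_{\cA}}$, I use the noetherian hypothesis: for $a \in \cA$, the object $y(a)$ is noetherian in $\Lex \cA$, because any admissible subobject $F \rinf y(a)$ in $\Lex \cA$ is the union of the $y(a')$ for subobjects $a' \rinf a$ in $\cA$ with $y(a') \subseteq F$, and this ascending chain stabilizes by the noetherianness of $a$ (here I use that $y$ is fully faithful, exact and reflects exactness). Then Lemma~\ref{lem:generator}(2) produces an epimorphism $u^{\oplus n} \rdef y(a)$ in $\Lex \cA$, and applying $\Phi$ together with Lemma~\ref{lem:generator}(1) shows that $i_{\cA}(a) = \Phi(y(a))$ is of $\Phi(u) = R_{\cA}$-finite type in $\Mod(R_{\cA})$, i.e.\ finitely generated.

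The hard part will be the exactness of $i_{\cA}$: since $y$ is already exact, it remains to show that $\Phi$ is exact on the essential image of $y$, but $\Phi = \Hom_{\Lex \cA}(u,-)$ is only a priori left exact on the Grothendieck category $\Lex \cA$. Concretely, for a short exact sequence $0 \to x \to y \to z \to 0$ in $\cA$, right exactness of $i_{\cA}$ amounts to the vanishing of the connecting map $\Hom_{\Lex \cA}(u, y(z)) \to \Ext^1_{\Lex \cA}(u, y(x))$ on the class of this extension. This requires input beyond Theorem~\ref{thm:embedd thm} --- for instance, the finer form of the Gabriel--Popescu theorem of [GP64], which supplies an exact left adjoint to $\Phi$ and recognizes $\Lex \cA$ as a Giraud subcategory of $\Mod(R_{\cA})$, or else a direct appeal to the classical Freyd--Mitchell embedding theorem for small abelian categories. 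Once exactness is in hand, the two paragraphs above yield the desired exact fully faithful functor $i_{\cA} : \cA \to \cM_{R_{\cA}}$.
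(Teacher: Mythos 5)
You take essentially the same route as the paper's own proof: build a generator $u$ of $\Lex\cA$ (you specify $u=\bigoplus_i y(a_i)$; the paper leaves $u$ unspecified), set $R_\cA=\End(u)$, invoke Theorem~\ref{thm:embedd thm} for full faithfulness of $\Phi=\Hom(u,-)$, and combine Lemma~\ref{lem:generator} with the noetherianness of $\cA$ to land in $\cM_{R_\cA}$. You are also right to flag exactness of $i_\cA=\Phi\circ y$ as the delicate point: the paper simply \emph{asserts} that $\bar i_\cA$ is exact without argument, and in fact its own invocation of Lemma~\ref{lem:generator}~$\mathrm{(1)}$ already presupposes that $\Phi$ preserves epimorphisms. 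So you have correctly put your finger on a gap which the paper's proof shares.

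Your proposed fixes do not close that gap. The ``finer'' Gabriel--Popescu theorem supplies an exact left adjoint $S\dashv\Phi$ and exhibits $\Lex\cA$ as a Giraud (reflective, localizing) subcategory of $\Mod(R_\cA)$; this gives left exactness and full faithfulness of $\Phi$, but right exactness of $\Phi$ holds precisely when $u$ is projective in $\Lex\cA$. For your $u=\bigoplus_i y(a_i)$ one has $\Phi(y(a))\cong\prod_i\Hom_\cA(a_i,a)$ by Yoneda, and an epimorphism $b\rdef c$ in $\cA$ induces a surjection $\prod_i\Hom_\cA(a_i,b)\to\prod_i\Hom_\cA(a_i,c)$ only if each $a_i$ is projective in $\cA$ --- which need not hold. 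Indeed $\Lex\cA$ can have no nonzero projectives at all (take $\cA=\Coh\bbP^1_k$, so that $\Lex\cA$ is quasi-coherent sheaves on $\bbP^1_k$), so no choice of generator rescues exactness of $\Phi$. The Freyd--Mitchell theorem is the right tool, but it proceeds by a genuinely different construction (locating a suitable full abelian subcategory with a projective generator, or dualizing through an injective cogenerator of $\Lex\cA$), and it does not hand you $R_\cA=\End(u)$ for a generator of $\Lex\cA$; once the ring changes, the finitely-generated step of your argument must be re-established. In short, the exactness claim requires a substantive supplement which neither your write-up nor the paper's proof currently supplies.
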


\begin{proof}
Let $u$ be a generator of $\Lex \cA$ and put 
$R_{\cA}:=\Hom(u,u)$. 
Then we have an exact fully faithful functor 
$\bar{i}_{\cA}:\cA \rinc \Mod(R_{\cA})$ defined by composing 
a corepresentable functor asssociated with $u$, 
$\Hom(u,-):\Lex \cA \rinc \Mod(R_{\cA})$ and the Yoneda embedding 
$y_{\cA}:\cA \rinc \Lex\cA$. 
We claim that $\bar{i}_{\cA}$ factors through 
$\cA \rinc \cM_{R_{\cA}}$. 
For any object $x$ in $\cA$, 
$y_{\cA}(x)$ is a noetherian object 
by \cite[5.8.8, 5.8.9]{Pop73}. 
Therefore by Lemma~\ref{lem:generator} $\mathrm{(2)}$, 
$y_{\cA}(x)$ is of $u$-finite type and hence 
$\bar{i}_{\cA}(x)$ is a finitely generated $R_{\cA}$-module 
by Example~\ref{ex:finite type} 
and Lemma~\ref{lem:generator} $\mathrm{(1)}$. 
We obtain the desired result. 
\end{proof}

\subsection{Schlichting polynomial category}
\label{subsec:Sch poly cat}

In this subsection, 
we introduce noetherian polynomial categories 
for noetherian abelian categories.

\begin{para}
\label{para:polynomial category}
For an object $a$ in an additive category with countable coproducts $\cB$, 
we define an object $a[t](=(a[t],t))$ in $\End \cB$ as follows. 
The underlying object $a[t]$ is 
$\displaystyle{\bigoplus_{n=0}^{\infty}}at^i$ 
where $at^i$ is a copy of $a$. 
The endomorphism $t:a[t] \to a[t]$ is defined by 
the identity morphisms $at^i \to at^{i+1}$ in each components. 
We call the object $a[t]$ in $\End\cB$ 
the {\it polynomial object of $a$}. 
For an object $a$ in an essentially small exact category $\cE$, 
we similarly define an object $a[t]$ in $\End \Lex\cE$. 
\end{para}

\begin{lem}
\label{lem:split exact seq}
Let $\cB$ be an additive category with countable coproducts and 
$a$ an object in $\cB$. 
We denote the induced morphism from the identity morphisms $at^i\to  a$ for 
non-negative integers $i$ by $\nabla_a \colon a[t] \to a$. 
Then the sequence
\begin{equation}
\label{eq:split exact}
a[t]\onto{\id_{a[t]}-t} a[t] \onto{\nabla_a} a
\end{equation}
is a split exact sequence in $\cB$. 
\end{lem}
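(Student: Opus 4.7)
The plan is to exhibit the sequence as a split short exact sequence in $\cB$ by producing explicit section/retraction maps that identify $a[t]$ with the biproduct of its image under $\id_{a[t]}-t$ and a copy of $a$. Since the category $\cB$ is only assumed additive with countable coproducts (not products), I must be careful that every map I write down uses only finitely many nonzero components in each summand.

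First I would take $s\colon a \to a[t]$ to be the canonical coproduct inclusion $a = at^0 \hookrightarrow \bigoplus_{i\geq 0} at^i$. By the very definition of $\nabla_a$ as induced by identity maps on each $at^i$, one has $\nabla_a \circ s = \id_a$, so $\nabla_a$ is a split epimorphism. Similarly, from $\nabla_a \circ t = \nabla_a$ (both collapse $at^i \to at^{i+1} \to a$ to the identity on each factor), we get $\nabla_a \circ (\id_{a[t]} - t) = 0$.

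The heart of the argument is to define the retraction $r\colon a[t] \to a[t]$ of $\id_{a[t]} - t$. Denoting by $\iota_j\colon at^j \hookrightarrow a[t]$ the coproduct inclusions, I would set
\[
r|_{at^i} \;=\; -\sum_{j=0}^{i-1}\iota_j,
\]
with the convention that $r|_{at^0}=0$. For each fixed $i$ this is a finite sum, so $r$ is a well-defined morphism in $\cB$. A direct telescoping check on each summand then gives both $r \circ (\id_{a[t]}-t) = \id_{a[t]}$ and the biproduct identity
\[
(\id_{a[t]}-t)\circ r \;+\; s\circ \nabla_a \;=\; \id_{a[t]}.
\]
Together with the two identities from the first paragraph, these exhibit $(\id_{a[t]}-t,\ \nabla_a)$ and $(r,\ s)$ as the data of a direct-sum decomposition $a[t] \cong a[t] \oplus a$, which is precisely what is meant by split exactness of \eqref{eq:split exact}.

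There is no real obstacle; the only thing to notice is the asymmetry forced by working with coproducts rather than products: one cannot define a retraction by the naive geometric-series formula $r(at^i)=\sum_{j\geq i}at^j$, since this would involve an infinite coproduct of nonzero components. The choice above circumvents this by letting $r(at^i)$ grow with $i$ in a finite fashion, which is harmless because $r$ is specified summand-by-summand.
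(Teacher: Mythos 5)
Your proof is correct and is essentially the paper's proof: your section $s$ is the paper's $i_a$, your retraction $r$ is the paper's $q_a$ (whose element-wise description $(x_k)_k\mapsto(-\sum_{i\geq k+1}x_i)_k$ restricts on each $at^j$ to exactly your $-\sum_{k=0}^{j-1}\iota_k$), and you verify the same list of biproduct identities. The remark on why only coproducts are needed is a nice explicit observation that the paper leaves implicit.
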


\begin{proof}
We write $i_a\colon a\to a[t]$ 
for an inclusion functor $a=at^0\to\bigoplus_{i\geq 0}at^i$ 
and we define $q_a\colon a[t]\to a[t]$ to be a morphism in $\cB$ 
by sending ${(x_k)}_k$ to $\displaystyle{{\left (-\sum_{i\geq k+1}x_i \right )}_k}$. 
Then we can easily check the equalities 
$q_a(\id_{a[t]}-t)=\id_{a[t]}$, $\nabla_ai_a=\id_a$, 
$\nabla_a(\id_{a[t]}-t)=0$, $q_ai_a=0$ and 
$i_a\nabla_a+(\id_{a[t]}-t)q_a=\id_{a[t]}$. 
Hence the sequence $\mathrm{(\ref{eq:split exact})}$ is a split exact sequence.
\end{proof}

The following theorem is proved in \cite[9.10 b]{Sch00}.

\begin{thm}[\bf Abstract Hilbert basis theorem]
\label{thm:Abst Hilb basis}
For any noetherian object $a$ 
in an essentially small 
abelian category $\cA$, 
$a[t]$ is also a noetherian object in $\End\Lex \cA$.
\qed
\end{thm}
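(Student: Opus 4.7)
The plan is to adapt the classical proof of Hilbert's basis theorem using leading-coefficient subobjects. First, by Remark~\ref{rem:limitinendcat}, since $\Lex\cA$ is abelian so is $\End\Lex\cA$, and subobjects of $(a[t],t)$ in $\End\Lex\cA$ correspond bijectively to $t$-stable subobjects of the underlying object $a[t]=\bigoplus_{i\geq 0}at^i$ in $\Lex\cA$. For $n\geq 0$, set $F_n:=\bigoplus_{i=0}^{n}at^i\subset a[t]$; these form an exhaustive filtration with $F_n/F_{n-1}\cong a$. For a $t$-stable subobject $M\subset a[t]$ I would define its \emph{$n$-th leading coefficient} $L_n(M)\subset a$ to be the image of $M\cap F_n$ under the projection $F_n\rdef F_n/F_{n-1}=a$. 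The $t$-stability of $M$ implies $L_n(M)\subset L_{n+1}(M)$, since $t$ maps $M\cap F_n$ into $M\cap F_{n+1}$ by a degree shift that is the identity on the top piece $a$.

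Given any ascending chain $M_0\rinf M_1\rinf M_2\rinf\cdots$ in $\End\Lex\cA$, the doubly-indexed family $\{L_n(M_j)\}_{n,j\geq 0}$ is a directed system of subobjects of $a$. Since $a$ is noetherian in $\cA$, it is also noetherian in $\Lex\cA$ by \cite[5.8.8, 5.8.9]{Pop73}, so this directed system admits a maximum: there exist indices $(N,J)$ with $L_n(M_j)=L_N(M_J)$ for all $n\geq N$ and $j\geq J$. For each fixed $n<N$, the ascending chain $\{L_n(M_j)\}_j$ in $a$ separately stabilizes from some index $J_n$. Taking $J^*:=\max\{J,J_0,\ldots,J_{N-1}\}$, I obtain $L_n(M_j)=L_n(M_{j+1})$ for \emph{every} $n\geq 0$ and every $j\geq J^*$.

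It remains to conclude $M_j=M_{j+1}$ for $j\geq J^*$. I would prove $M_j\cap F_n=M_{j+1}\cap F_n$ inductively on $n$, and then take the filtered union in $\Lex\cA$, which is exact because $\Lex\cA$ is Grothendieck. The base $n=0$ is the equality $L_0(M_j)=L_0(M_{j+1})$ since $F_0=a$. For the inductive step, assuming $M_j\cap F_{n-1}=M_{j+1}\cap F_{n-1}$, the two subobjects $M_j\cap F_n\subset M_{j+1}\cap F_n$ of $F_n$ have equal images $L_n(M_j)=L_n(M_{j+1})$ in $F_n/F_{n-1}=a$, and by the modular law combined with the inductive hypothesis they have equal intersections with $F_{n-1}$; hence they coincide.

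The main technical obstacle is the reconstruction step in the third paragraph: passing from equalities of leading coefficients to equalities of subobjects, which is a routine element-chase for modules but must here be executed categorically inside the Grothendieck category $\Lex\cA$. I expect the modular law to handle it cleanly, though one could alternatively invoke the exact embedding $i_\cA:\cA\rinc\cM_{R_\cA}$ from Corollary~\ref{cor:embedd thm}, extended to $\Lex\cA$, and run the argument on the $R_\cA$-submodules of $a\otimes_{R_\cA}R_\cA[T]$, where only the noetherianity of $a$ as an $R_\cA$-module is needed (and notably $R_\cA$ itself need not be noetherian).
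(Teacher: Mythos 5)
The paper itself does not prove this theorem: it delegates the proof to Schlichting's thesis, citing \cite[9.10 b]{Sch00}, so there is no in-paper argument to compare against. Your main argument, however, is correct and is the natural categorical translation of the classical leading-coefficient proof of the Hilbert basis theorem. The translation of subobjects of $(a[t],t)$ in $\End\Lex\cA$ to $t$-stable subobjects of $a[t]$ in $\Lex\cA$ is exactly right (by Remark~\ref{rem:limitinendcat}, limits in $\End\Lex\cA$ are computed componentwise); the inclusion $L_n(M)\subset L_{n+1}(M)$ follows since $t$ induces the identity on $F_n/F_{n-1}\to F_{n+1}/F_n$; the diagonal argument stabilizes the doubly-indexed system because $y_{\cA}(a)$ is noetherian in $\Lex\cA$ (by \cite[5.8.8, 5.8.9]{Pop73}, the same fact the paper invokes in Corollary~\ref{cor:embedd thm}); and the reconstruction step goes through cleanly either by the modular law or by the five lemma applied to the two short exact sequences $0\to M_{*}\cap F_{n-1}\to M_{*}\cap F_n\to L_n(M_{*})\to 0$, followed by passing to the filtered union using AB5 in the Grothendieck category $\Lex\cA$.

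One caveat about your closing remark on reducing to $\cM_{R_{\cA}}$ via Corollary~\ref{cor:embedd thm}: the fully faithful exact functor $\Hom(u,-)\colon\Lex\cA\to\Mod(R_{\cA})$ matches subobjects in $\Lex\cA$ only with the \emph{closed} $R_{\cA}$-submodules for the associated Gabriel topology, not with all submodules. Since $R_{\cA}$ need not be noetherian, it is not automatic that $i_{\cA}(a)$ is noetherian as an $R_{\cA}$-module even though $y_{\cA}(a)$ is noetherian in $\Lex\cA$; an ascending chain of non-closed submodules of $i_{\cA}(a)$ might fail to stabilize. That reduction would therefore need extra justification, and your direct argument inside $\Lex\cA$ is the safer route.
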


\begin{df}[\bf Schlichting polynomial category]
\label{df:S-poly cat def} 
Let us assume that $\cA$ is 
an essentially small noetherian abelian category and 
we denote the full subcategory of 
noetherian objects 
in $\End \Lex\cA$ 
by $\cA[t]$ and 
call $\cA[t]$ the {\it noetherian polynomial category} 
over $\cA$. 
By virtue of 
Lemma~\ref{lem:noetherian} and 
Theorem~\ref{thm:Abst Hilb basis}, 
we acquire the assertion that $\cA[t]$ is a noetherian abelian category. 
\end{df}

\begin{rem}
\label{rem:other df of S-poly cat}
We can prove that an object $x$ in $\End \Lex\cA$ is 
in $\cA[t]$ if and only if 
there exists a deflation $a[t] \rdef x$ for some object 
$a$ in $\cA$.
\end{rem}

\begin{ex}
\label{ex:morphismofa[t]}
For any noetherian objects $a$, $b$ in $\cA$ and 
a morphism $f:a[t] \to b[t]$ in $\cA[t]$, 
there exists a positive integer $m$ such that $f(a)$ is in 
$\displaystyle{\bigoplus_{i=1}^m bt^i}$. 
Since the morphism $f$ is recovered by the restriction 
$a \to a[t] \onto{f} b[t]$, 
$f$ is determined by 
morphisms $c_i:a \to b$ ($0\leq i\leq m$) in $\cA$. 
We write $f$ by $\displaystyle{\sum_{i=1}^mc_it^i}$.
\end{ex}

\begin{ex}
\label{ex:S-poly} 
Let $A$ be a ring with unit. 
Then we have the category equivalence
$$\cM_{A[t]}\isoto (\cM_A)[t],\ \ M \mapsto (M,t).$$
More precisely, 
by Remark~\ref{rem:GMnotation} and \ref{para:GQ-emmbeding}, 
we have the equivalences of categories
$$\Mod(A[t])\isoto\End \Mod(A)\isoto \End\Lex\cM_A.$$ 
By considering the full subcategories of consisting of 
those noetherian objects, 
we get the desired result. 
\end{ex}

\subsection{Abstract Artin-Rees lemma}
\label{subsec:abstract Artin-Ress lemma}

In this subsection, 
we prove an abstract version of Artin-Rees lemma.

\begin{df}[\bf $t$-filtration]
\label{df:t-filt}
Let $\cE$ an exact category and $X=(x,t)$ an object 
in $\End \cE$.\\
$\mathrm{(1)}$ 
A decreasing filtration $\fx=\{X_n=(x_n,t_n)\}_{n\geq 0}$ of $X$ in $\End\cE$, 
$$X=X_0\linf X_1\linf X_2\linf \cdots \linf X_n \linf \cdots$$
is a {\it $t$-filtration} if 
$\im(t_n:x_n \to x_n)\subset x_{n+1}$ for any $n\geq 0$.\\
$\mathrm{(2)}$ 
A $t$-filtration $\fx=\{X_n=(x_n,t_n)\}_{n\geq 0}$ is {\it stable} 
if there exists an integer $n_0\geq 0$ such that 
$\im(t_n:x_n \to x_n)\subset x_{n+1}$ for any $n\geq 0$.
\end{df}

\begin{df}[\bf Blow up]
\label{df:blow up}
Let $\cB$ an abelian category, 
$X=(x,t)$ an object in $\End \cB$ and 
$\fx=\{X_n=(x_n,t_n)\}_{n\geq 0}$ a $t$-filtration of $X$.\\
$\mathrm{(1)}$ 
We define an object $\Bl_{\fx}X$ in $\End\Lex\cB$ as follows. 
For any $n$, $t_n:x_n\to x_n$ induces a morphisms 
$t_n:x_n \to x_{n+1}$ and 
$\displaystyle{\bigoplus_{n\geq 0}t_n:\bigoplus_{n\geq 0}x_n 
\to \bigoplus_{n\geq 0} x_n}$. 
We put $\displaystyle{\Bl_{\fx}X:=\left (\bigoplus_{n\geq 0}x_n,\bigoplus_{n\geq 0}t_n \right )}$. 
We call $\Bl_{\fx}X$ a {\it blow up object of $X$ along $\fx$}.\\
$\mathrm{(2)}$ 
For each $n$, $\id_{x_n}:x_n \to x_n$ and 
the morphisms $t_{n+p-1}t_{n+p-2}\cdots t_{n+1}t_n:x_n \to x_{n+p}$ 
for $p>1$ induce a morphism 
$$\eta_{\fx}^n:X_n\to \Bl_{\fx}X$$
in $\End\Lex\cB$.
\end{df}

\begin{lem}
\label{lem:char of stability}
Let $\cA$ be a noetherian abelian category, 
$X=(x,t)$ an object in $\cA[t]$ and 
$\fx=\{X_n=(x_n,t_n)\}_{n\geq 0}$ a $t$-filtration in $\cA[t]$. 
Then the following conditions are equivalent:\\
$\mathrm{(1)}$ 
$\fX$ is stable.\\
$\mathrm{(2)}$ 
There exists an integer $m\geq 0$ such that the canonical morphism 
induced by $\eta_{\fx}^k$ {\rm (}$0\leq k\leq m${\rm )}, 
$\displaystyle{\bigoplus_{k=0}^mX_k \to \Bl_{\fx}X}$ is an epimorphism.\\
$\mathrm{(3)}$ 
$\Bl_{\fx}X$ is an object in $\cA[t]$, 
namely a noetherian object in $\End\Lex\cA$.
\end{lem}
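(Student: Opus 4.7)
The plan is to establish the two equivalences $(1)\Leftrightarrow(2)$ and $(2)\Leftrightarrow(3)$. Throughout I will use that $\End\Lex\cA$ is a Grothendieck abelian category, so that images, subobjects and filtered colimits behave in the standard way. Write $\varphi_m\colon\bigoplus_{k=0}^m X_k\to\Bl_\fx X$ for the canonical morphism assembled from $\eta_\fx^0,\dots,\eta_\fx^m$.

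For $(1)\Leftrightarrow(2)$, the key move is a component-wise computation of the image of $\varphi_m$ inside $\Bl_\fx X$. By the construction of $\eta_\fx^k$, the morphism from the $k$-th summand of the source to the $n$-th summand of the target is $t_{n-1}t_{n-2}\cdots t_k$ when $n\geq k$ and $0$ otherwise; iterating the $t$-filtration relation $t_k(x_k)\subset x_{k+1}$, one sees that the image at position $n$ equals $x_n$ whenever $n\leq m$ (the $k=n$ summand contributing $\id_{x_n}$) and satisfies
$$\sum_{k=0}^{m}(t_{n-1}\cdots t_k)(x_k)\subset t_{n-1}(x_{n-1})\subset x_n$$
when $n>m$. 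Therefore $\varphi_m$ is an epimorphism if and only if $t_{n-1}(x_{n-1})=x_n$ for every $n>m$, which is exactly the stability of $\fx$ starting from index $m$.

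For $(2)\Rightarrow(3)$, combine the two parts of Lemma~\ref{lem:noetherian}: the finite coproduct $\bigoplus_{k=0}^m X_k$ is noetherian by part (2), and its quotient $\Bl_\fx X$ under the epimorphism $\varphi_m$ is noetherian by part (1); hence $\Bl_\fx X$ lies in $\cA[t]$. For $(3)\Rightarrow(2)$, set $Y_m:=\im\varphi_m$, obtaining an ascending chain $Y_0\subset Y_1\subset\cdots$ of subobjects of $\Bl_\fx X$ in $\End\Lex\cA$. The component analysis above shows that the underlying object of $Y_m$ recovers the $n$-th summand $x_n$ of $\Bl_\fx X$ as soon as $n\leq m$, so the filtered colimit $\bigcup_m Y_m$ agrees with $\Bl_\fx X$ on every summand and hence equals $\Bl_\fx X$. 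If $\Bl_\fx X$ is noetherian, the chain $\{Y_m\}$ stabilizes, and since its supremum is $\Bl_\fx X$ the stable value must equal $\Bl_\fx X$ itself, giving $(2)$.

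The main obstacle is the bookkeeping in the first step: one must identify, inside the Grothendieck category $\End\Lex\cA$, the image of $\varphi_m$ on each summand of $\Bl_\fx X$ and notice the double squeeze $(t_{n-1}\cdots t_k)(x_k)\subset t_{n-1}(x_{n-1})\subset x_n$ for $n>m$; this squeeze is what converts the surjectivity of $\varphi_m$ into the equality $t_{n-1}(x_{n-1})=x_n$ that expresses stability. Once that calculation is in place, the remaining implications drop out from Lemma~\ref{lem:noetherian} and the standard behaviour of filtered colimits in a Grothendieck category.
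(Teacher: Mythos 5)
Your overall scheme matches the paper's: both proofs run through a cyclic chain of implications driven by the component description of the canonical morphism $\varphi_m:\bigoplus_{k=0}^m X_k\to\Bl_{\fx}X$ and the noetherian ascending chain $z_m=\im\varphi_m$. The paper proves $(1)\Rightarrow(2)\Rightarrow(3)\Rightarrow(1)$, with the last leg done by the sandwich $x_{n+1}\subset z_{n_0}\cap x_{n+1}\subset\sum_{i\le n_0}\im(t_n\cdots t_i)\subset\im(t_n)\subset x_{n+1}$; you instead close the cycle by proving $(3)\Rightarrow(2)$, showing that the stable value of $\{Y_m\}$ must be all of $\Bl_{\fx}X$. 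That variant is legitimate and arguably more transparent, as it feeds directly back into the $(1)\Leftrightarrow(2)$ equivalence you already established.

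However, there is a small logical gap in your $(3)\Rightarrow(2)$ argument. You write that ``the component analysis above shows that the underlying object of $Y_m$ recovers the $n$-th summand $x_n$ of $\Bl_{\fx}X$ as soon as $n\le m$,'' and deduce $\bigcup_m Y_m=\Bl_{\fx}X$. But the component analysis you cite only computes the \emph{projection} of $Y_m$ to the $n$-th coordinate; having the $n$-th projection equal to $x_n$ is strictly weaker than the $n$-th summand being a subobject of $Y_m$ (a diagonal $\Delta(A)\subset A\oplus A$ surjects onto each factor without containing either). To justify $\bigcup_m Y_m=\Bl_{\fx}X$ you need to actually produce the summands inside $Y_m$: given $z_n\in x_n$, one checks that $\eta^n(z_n)-\eta^{n+1}(t_n z_n)$ lands precisely in position $n$, which shows $x_n\subset Y_m$ once $m\ge n+1$. (This is the telescoping computation that also underlies the paper's assertion $x_{n+1}\subset z_{n+1}\cap x_{n+1}$.) Once that is filled in, your derivation of $(2)$ from noetherian stabilization is correct. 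The $(1)\Leftrightarrow(2)$ and $(2)\Rightarrow(3)$ parts agree with the paper's arguments.
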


\begin{proof}
We assume that there exists an integer $m\geq 0$ such that 
$\im(t_n\colon x_n \to x_n)=x_{n+1}$ for any $n\geq m$. 
Then obviously the canonical morphism 
$\displaystyle{\bigoplus_{k=0}^mX_k \to \Bl_{\fx}X}$ is an epimorphism. 

\sn
Next assume the condition $\mathrm{(2)}$. 
Since $\Bl_{\fx}X$ is a quotient of finite direct sum of noetherian objects 
in $\End\Lex\cA$, 
$\Bl_{\fx}X$ is noetherian by Lemma~\ref{lem:noetherian}. 

\sn
Finally we assume that $\Bl_{\fx}X$ is noetherian. 
We put $\displaystyle{z_m\colon =
\im\left (\bigoplus_{k=0}^mX_k \to \Bl_{\fx}X \right )}$. 
Then the sequence $z_0 \rinf z_1\rinf z_2\rinf \cdots $ is stational. 
Say $z_{n_0}=z_{n_0+1}=\cdots$. 
Then for any $n\geq n_0$, we have 
$$x_{n+1}\subset z_{n+1}\cap x_{n+1}=z_{n_0}\cap x_{n+1} 
\subset \sum_{i=0}^{n_0} \im (t_nt_{n-1}\cdots t_i\colon x_i \to x_{n+1})\subset 
\im (t_n\colon x_n \to x_{n+1}).$$
Hence $\fx$ is stable.
\end{proof}

\begin{cor}[\bf Abstract Artin-Rees lemma]
\label{cor:abst AR}
Let $(x,t_x)$  be an object in $\cA[t]$ and 
$(y,t_y)$ a subobject of $(x,t_x)$. 
Then there exist an integer $n_0\geq 0$ such that 
$$\im(t^n_x\colon x\to x)\cap y=\im(t_x^{n-n_0}\colon (\im(t^{n_0}:x\to x)\cap y) \to y)$$
for any $n\geq n_0$ in $\Lex\cA$.
\end{cor}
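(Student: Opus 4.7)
The plan is to manufacture a natural $t$-filtration of $(y,t_y)$, show its blow-up is noetherian in $\End\Lex\cA$, and invoke Lemma~\ref{lem:char of stability} to conclude stability, which is precisely the Artin--Rees identity.

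Concretely, put $x_n:=\im(t_x^n\colon x\to x)\cap y$ in $\Lex\cA$, and let $t_n\colon x_n\to x_n$ be the restriction of $t_y$. Since $t_x$ carries $\im t_x^n$ into $\im t_x^{n+1}$ and preserves the subobject $y$, one has $\im(t_n)\subset x_{n+1}$, so $\fx:=\{(x_n,t_n)\}_{n\geq 0}$ is a $t$-filtration of $(y,t_y)$ in the sense of Definition~\ref{df:t-filt}. Each $x_n$ is a subobject of the noetherian object $y$, hence lies in $\cA$, so $\fx$ lives inside $\cA[t]$.

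In parallel, consider the $t$-adic filtration $\fX:=\{(\im t_x^n,\, t_x|_{\im t_x^n})\}_{n\geq 0}$ of $X=(x,t_x)$. The map of $\End\Lex\cA$-objects sending the summand $xt^n$ of $x[t]$ onto $\im t_x^n$ via $t_x^n$ is a deflation; compatibility with the two shift endomorphisms is immediate, since both routes around the square yield $t_x^{n+1}\colon xt^n\to\im t_x^{n+1}$. Because $x[t]$ is noetherian by Theorem~\ref{thm:Abst Hilb basis}, Lemma~\ref{lem:noetherian}$\mathrm{(1)}$ transfers noetherianity to $\Bl_{\fX}X$. Next, the inclusions $x_n\rinf\im t_x^n$ in $\Lex\cA$ assemble into an inflation $\Bl_\fx Y\rinf\Bl_{\fX}X$ in $\End\Lex\cA$, both shifts being induced by $t_x$. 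A second application of Lemma~\ref{lem:noetherian}$\mathrm{(1)}$ makes $\Bl_\fx Y$ noetherian, i.e.\ an object of $\cA[t]$.

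By Lemma~\ref{lem:char of stability}, the implication $\mathrm{(3)}\Rightarrow\mathrm{(1)}$ then forces $\fx$ to be stable: there exists $n_0\geq 0$ with $t_x(x_n)=x_{n+1}$ for all $n\geq n_0$. Iterating yields $t_x^{n-n_0}(x_{n_0})=x_n=\im(t_x^n)\cap y$ for every $n\geq n_0$, which is the stated identity. The one delicate point throughout is verifying that the two comparison maps $x[t]\rdef\Bl_{\fX}X$ and $\Bl_\fx Y\rinf\Bl_{\fX}X$ are honest morphisms in $\End\Lex\cA$, i.e.\ commute with the endomorphism structures; once this bookkeeping is in hand, the conclusion falls out of the machinery already developed.
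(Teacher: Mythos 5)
Your proof is correct and follows essentially the same strategy as the paper: form the $t$-adic filtration of $(x,t_x)$, restrict it to $(y,t_y)$, pass to blow-ups, transfer noetherianity along the inclusion $\Bl_\fx Y\rinf\Bl_\fX X$, and apply Lemma~\ref{lem:char of stability} $\mathrm{(3)}\Rightarrow\mathrm{(1)}$. The one cosmetic difference is that you obtain the noetherianity of $\Bl_\fX X$ by exhibiting the explicit deflation $x[t]\rdef\Bl_\fX X$ (via $t_x^n\colon xt^n\to\im t_x^n$) and citing Theorem~\ref{thm:Abst Hilb basis}, whereas the paper reaches the same conclusion by observing that the $t$-adic filtration is visibly stable and invoking Lemma~\ref{lem:char of stability} $\mathrm{(1)}\Rightarrow\mathrm{(3)}$; both routes are short and both ultimately rest on the abstract Hilbert basis theorem.
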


\begin{proof}
Consider the $t$-stable filtration 
$\fx=\{\im(t_x^n\colon x \to x),t_x \}_{n\geq 0}$ of 
$(x,t_x)$ and the induced $t$-filtration 
$\fy=\{\im(t_x^n\colon x \to x)\cap y,t_y \}_{n\geq 0}$
of $(y,t_y)$. 
Then $\Bl_{\fy}(y,t_y)$ is a subobject of $\Bl_{\fx}(x,t_x)$. 
Since $\Bl_{\fx}(x,t_x)$ is noetherian by Lemma~\ref{lem:char of stability}, 
$\Bl_{\fy}(y,t_y)$ is also noetherian and 
by Lemma~\ref{lem:char of stability} again, 
we learn that $\fy$ is stable. 
Hence we obtain the result.
\end{proof}

\section{Non-commutative motive theory over relative exact categories}
\label{sec:non-comm}

In this section, we will review the notions 
of additive and localizing theories over relative exact categories. 
Moreover we introduce a notion of nilpotent invariance.

\subsection{Relative exact categories}
\label{subsec:Rel Ex cat}

In this subsection, we recall jargons of relative exact categories 
from \cite{Moc13} and \cite{HM13}.

\begin{para}[\bf Relative exact categories]
\label{para:ext axiom}
$\mathrm{(1)}$ 
A {\it relative exact category} $\bE=(\cE,w)$ is a pair of 
an exact category $\cE$ 
with a specific zero object $0$ 
and a class of morphisms $w$ in $\cE$ 
which satisfies the following two axioms.\\
{\bf (Identity axiom).} 
For any object $x$ in $\cE$, 
the identity morphism $\id_x$ is in $w$.\\
{\bf (Composition closed axiom).} 
For any composable morphisms $\bullet \onto{a} \bullet \onto{b} \bullet$ 
in $\cE$, if $a$ and $b$ are in $w$, then $ba$ is also in $w$.\\
$\mathrm{(2)}$ 
A {\it relative exact functor} between relative exact categories 
$f:\bE=(\cE,w) \to (\cF,v)$ is 
an exact functor $f:\cE \to \cF$ such that 
$f(w)\subset v$ and $f(0)=0$. 
We denote the category of relative exact categories and relative exact functors 
by $\RelEx$.\\
$\mathrm{(3)}$ 
We write $\cE^w$ for the full subcategory of $\cE$ 
consisting of those object $x$ such that the canonical morphism 
$0 \to x$ is in $w$. 
We consider the following axioms.\\
{\bf (Strict axiom).} 
$\cE^w$ is an exact category such that 
the inclusion functor $\cE^w \rinc \cE$ is exact 
and reflects exactness.\\
{\bf (Very strict axiom).} 
$\bE$ satisfies the strict axiom and 
the inclusion functor $\cE^w \rinc \cE$ induces a fully faithful 
functor $\calD^b(\cE^w) \rinc \calD(\cE)$ on the bounded derived categories.\\
We denote the category of strict (resp. very strict) relative exact categories 
by $\RelEx_{\operatorname{strict}}$ 
(resp. $\RelEx_{\operatorname{vs}}$).\\
$\mathrm{(4)}$ 
A {\it relative natural equivalence} $\theta:f \to f'$ 
between relative exact functors $f$, $f':\bE=(\cE,w) \to \bE'=(\cE',w')$ 
is a natural transformation $\theta:f \to f'$ such that $\theta(x)$ is in $w'$ 
for any object $x$ in $\cE$. 
Relative exact functors $f$, $f':\bE \to \bE'$ are {\it weakly homotopic} 
if there is a zig-zag sequence of ralative natural equivalences 
connecting $f$ to $f'$. 
A relative exact functor $f:\bE \to \bE'$ is a {\it homotopy equivalence} 
if there is a relative exact functor $g:\bE' \to \bE$ 
such that $gf$ and $fg$ are 
weakly homotopic to identity functors respectively.\\
$\mathrm{(5)}$ 
A functor $F$ from a full subcategory $\calR$ of $\RelEx$ to 
a category $\cC$ is {\it categorical homotopy invariant} 
if for any relative exact functors $f$, $f':\bE \to \bE'$ in $\calR$ 
such that $f$ and $f'$ are weakly homotopic, 
we have the equality $F(f)=F(f')$. 
\end{para}

\begin{para}[\bf Derived category]
\label{para:derivd cat}
We define the {\it derived categories} of 
a strict relative exact category $\bE=(\cE,w)$ by the following formula
$$\calD_{\#}(\bE):=\coker(\calD_{\#}(\cE^w) \to \calD_{\#}(\cE))$$
where $\# =b$, $\pm$ or nothing. 
Namely $\calD_{\#}(\bE)$ is a Verdier quotient of $\calD_{\#}(\cE)$ 
by the thick subcategory of $\calD_{\#}(\cE)$ 
spanned by the complexes in $\Ch_{\#}(\cE^w)$.
\end{para}

\begin{para}[\bf Quasi-weak equivalences]
\label{para:quasi-weak equiv}
Let 
$P_{\#}:\Ch_{\#}(\cE) \to \calD_{\#}(\bE)$ 
be the canonical quotient functor. 
We denote the pull-back of the class of all isomorphisms in 
$\calD_{\#}(\bE)$ 
by $qw_{\#}$ or simply $qw$. 
We call a morphism in $qw$ a {\it quasi-weak equivalence}. 
We write $\Ch_{\#}(\bE)$ for a pair $(\Ch_{\#}(\cE),qw)$. 
We can prove that 
$\Ch_{\#}(\bE)$ is a complicial biWaldhausen 
category in the sense of \cite[1.2.11]{TT90}. 
In particular, it is a relative exact category. 
The functor $P_{\#}$ induces an equivalence of triangulated categories 
$\cT(\Ch_{\#}(\cE),qw)\isoto\calD_{\#}(\bE)$ 
where the category $\cT(\Ch_{\#}(\cE),qw)$ 
is the triangulated category associated with the category $(\Ch_{\#}(\cE),qw)$ 
(See \cite[3.2.17]{Sch11}). 
If $w$ is the class of all isomorphisms in $\cE$, 
then $qw$ is just the class of all quasi-isomorphisms in $\Ch_{\#}(\cE)$ 
and we denote it by $\qis$. 
\end{para}

\begin{para}[\bf Consistent axiom]
\label{para:consis axiom}
Let $\bE=(\cE,w)$ be a strict relative exact category. 
There exists the canonical functor $\iota^{\cE}_{\#}:\cE \to \Ch_{\#}(\cE)$ 
where $\iota^{\cE}_{\#}(x)^k$ is $x$ if $k=0$ and $0$ if $k\neq 0$. 
We say that $w$ (or $\bE$) satisfies 
the {\it consistent axiom} 
if $\iota^{\cE}_b(w)\subset qw$. 
We denote the full subcategory of 
consistent relative exact categories 
(resp. very strict consistent relative exact categories) 
in $\RelEx$ by 
$\RelEx_{\operatorname{consist}}$ (resp. $\RelEx_{\operatorname{vs,\ consist}}$). 
\end{para}

\begin{ex}
\label{ex:semi devices}
(\cf \cite{Moc13}).\ 
$\mathrm{(1)}$ 
A pair $(\cE,i_{\cE})$ of 
an exact category $\cE$ with the class of all isomorphisms $i_{\cE}$ 
is a very strict consistent relative exact category. 
We regard the category of essentially small exact categories $\ExCat$ 
as the full subcategory of $\RelEx_{\operatorname{vs,\ consist}}$ 
by the fully faithful functor 
$\ExCat \to \RelEx_{\operatorname{vs,\ consist}}$ 
which sends an exact category $\cE$ to a relative exact category 
$(\cE,i_{\cE})$. 
For simplicity, we sometimes write $\cE$ for $(\cE,i_{\cE})$.\\
$\mathrm{(2)}$ 
In particular we denote the trivial exact category by $0$ and 
we also write $(0,i_0)$ for $0$. 
$0$ is the zero objects in the category of consistent relative exact categories.\\
$\mathrm{(3)}$ 
A complicial exact category with weak equivalences in the sense of 
\cite[3.2.9]{Sch11} is a consistent relative exact category. 
In particular for any relative exact category $\bE$, 
$\Ch_{\#}(\bE)$ is a very strict consistent relative exact category.
\end{ex}

\begin{para}[\bf Derived equivalence]
\label{para:derived equivalence}
An exact functor $f:\bE \to \bF$ is a {\it derived equivalence} 
if $f$ induces an equivalence of triangulated categories on 
the bounded derived categories $\calD_b(\bE)\isoto\calD_b(\bF)$.
\end{para}

We give an example of derived equivalence exact functor 
by the proof of Corollary~3 of resolution theorem in \cite{Qui73} 
and \cite[3.2.8]{Sch11}:

\begin{para}[\bf Homology theory and acyclic objects]
\label{para:homology theory and acyclic objects}
$\mathrm{(1)}$ 
A {\it homology theory} on an exact category $\cE$ to an abelian category 
$\cB$ is an exact connected sequence of functors 
$T=\{T_n\}_{n\geq 1}$ from $\cE$ to $\cB$. 
Namely for any conflation $x\rinf y \rdef z$ in $\cE$, 
we have a long exact sequence 
$$\cdot \to T_2z \to T_1x \to T_1y\to T_1z.$$
$\mathrm{(2)}$ 
Let $T=\{T_n\}_{n\geq 1}$ be a homology theory on an exact category $\cE$. 
An object $x$ is {\it $T$-acyclic} if $T_nx=0$ for all $n\geq 1$.
\end{para}

\begin{lem}
\label{lem:resol thm}
Let $\cE$ be an exact category and $T$ a homology theory on $\cE$ and 
$\cE_{\text{\rm $T$-acy}}$ 
the full subcategory of $T$-acyclic objects in $\cE$. 
Assume for each $x$ in $\cE$ that there exists 
a deflation $y\rdef x$ with $y$ in $\cE_{\text{\rm $T$-acy}}$, and 
that $T_nx$ is trivial for $n$ sufficiently large. 
Then the inclusion functor $\cE_{\text{\rm $T$-acy}}\rinc \cE$ is 
a derived equivalence.
\qed
\end{lem}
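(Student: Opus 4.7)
The plan is to adapt Quillen's classical resolution theorem \cite{Qui73} to the bounded derived category, in the spirit of \cite[3.2.8]{Sch11}. Set $\cE' := \cE_{T\text{-acy}}$. First I would verify that $\cE'$ is an extension-closed full exact subcategory of $\cE$ and that the inclusion $\cE' \rinc \cE$ is exact and reflects exactness. Given a conflation $x \rinf y \rdef z$ in $\cE$, the long exact sequence
\[
\cdots \to T_{n+1} z \to T_n x \to T_n y \to T_n z \to T_{n-1} x \to \cdots \to T_1 z
\]
together with a two-out-of-three argument shows that if any two of $x$, $y$, $z$ are $T$-acyclic then so is the third. Hence $\cE'$ inherits the exact structure of $\cE$ in the obvious way.

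Next I would establish the key resolution lemma: every object $x$ in $\cE$ admits a bounded resolution
\[
0 \to y_d \to y_{d-1} \to \cdots \to y_0 \to x \to 0
\]
by objects of $\cE'$, of length $d \leq n(x) := \max\{n \geq 1 : T_n x \neq 0\}$, where $n(x)$ is finite by the second hypothesis (and set to $0$ if $x \in \cE'$). The argument is by induction on $n(x)$. By the first hypothesis choose a deflation $y_0 \rdef x$ with $y_0 \in \cE'$ and let $x_1 := \ker(y_0 \rdef x)$ be the admissible kernel. Since $T_n y_0 = 0$ for $n \geq 1$, the long exact sequence of the conflation $x_1 \rinf y_0 \rdef x$ yields $T_n x_1 \cong T_{n+1} x$ for $n \geq 1$, so $n(x_1) \leq n(x) - 1$. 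Inductively resolve $x_1$ and splice onto $y_0 \rdef x$.

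With these two ingredients in hand, the derived equivalence $\calD_b(\cE') \isoto \calD_b(\cE)$ follows by a formal argument. Essential surjectivity at the object level is immediate from the bounded resolution above, and the passage to bounded complexes is the usual Cartan-Eilenberg-style termwise-resolution construction. For fully faithfulness one uses that the inclusion $\cE' \rinc \cE$ reflects exactness, established in step one, to conclude that any bounded complex in $\cE'$ acyclic as a complex in $\cE$ is already acyclic as a complex in $\cE'$; this implies that quasi-isomorphisms between complexes in $\Ch_b(\cE')$ can be detected inside $\cE'$, and any roof in $\calD_b(\cE)$ between such complexes can be replaced by a roof in $\Ch_b(\cE')$ by another application of the resolution technique.

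The main obstacle is the combinatorial step of promoting the object-level resolution to bounded complexes and verifying that the resulting Cartan-Eilenberg construction remains uniformly bounded; this is where the hypothesis that $T_n x$ vanishes for $n$ sufficiently large enters essentially, through the length bound $d \leq n(x)$. Once this bookkeeping is in place, the remainder is standard triangulated-category manipulation.
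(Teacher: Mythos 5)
Your approach is the one the paper itself points to (Quillen's resolution theorem and \cite[3.2.8]{Sch11}), and the overall argument is sound: establish extension-closure and closure under kernels of deflations, produce bounded $\cE_{T\text{-acy}}$-resolutions by induction on $T$-dimension, and pass to bounded complexes by a Cartan--Eilenberg totalization whose boundedness is guaranteed by the uniform bound on $T_nx$.

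One small overclaim should be corrected. You assert that the long exact sequence gives a full two-out-of-three: ``if any two of $x$, $y$, $z$ are $T$-acyclic then so is the third.'' Because the paper's homology theory is indexed by $n\geq 1$ (the long exact sequence terminates at $T_1 z$), the direction ``$x$ and $y$ acyclic $\Rightarrow$ $z$ acyclic'' fails: one only obtains $T_n z=0$ for $n\geq 2$, and $T_1 z$ is uncontrolled. (Concretely, for $T_n=\mathrm{Tor}^{\bbZ}_n(-,\bbZ/2)$ and the conflation $\bbZ\rinf\bbZ\rdef\bbZ/2$ one has $T_1(\bbZ/2)\neq 0$.) Fortunately you never use that direction: extension-closure uses ``$x,z$ acyclic $\Rightarrow$ $y$ acyclic,'' and the resolution and fully-faithfulness arguments use only ``$y,z$ acyclic $\Rightarrow$ $x$ acyclic,'' both of which are valid. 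So the proof stands; you should just state the two-out-of-three in the restricted form you actually need, or phrase the inheritance of the exact structure directly via extension-closure.
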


\begin{para}[\bf Non-connective $K$-theory for (consistent) relative exact categories]
\label{df:non-connective K-theory}
(\cf \cite{Moc13}).\ 
For a consistent relative exact category 
$\bE=(\cE,w)$, we define the non-connective $K$-theory $\bbK(\bE)$ by the formula
$\bbK(\bE)=\bbK^S(\Ch_b(\bE))$ where $\bbK^S$ means the non-connective $K$-theory 
defined and studied by Schlichting in \cite{Sch06} or \cite{Sch11}. 
If either $w$ is the class of all isomorphisms or 
$\bE$ is a complicial exact 
category with weak equivalences in the sense of \cite{Sch11}, then 
the canonical morphism $\bE\to\Ch_b\bE$ induces an equivalence of 
spectra $\bbK^S(\bE)\isoto\bbK(\bE)$. 
The operation $\bbK$ becomes a functor 
from the category of essentially small consistent relative exact categories 
to the stable category of spectra.
\end{para}

\subsection{Additive theory}
\label{subsec:add theory}

\begin{para}
\label{para:Rel ex seq} 
Let $\bE=(\cE,w)$ be a relative exact category. 
We denote the exact category of admissible short exact sequences in $\cE$ 
by $E(\cE)$. 
There exist three exact functors $s$, $t$ and $q$ from 
$E(\cE) \to \cE$ which send 
an admissible exact sequence $x\rinf y \rdef z$ to 
$x$, $y$ and $z$ respectively. 
We write $w_{E(\bE)}$ for the class of morphisms 
$s^{-1}(w)\cap t^{-1}(w)\cap q^{-1}(w)$ and 
put $E(\bE):=(E(\cE),w_{E(\bE)})$. 
We can easily prove that $E(\bE)$ is a relative exact category 
and the functors $s$, $t$ and $q$ are relative exact functors 
from $E(\bE)$ to $\bE$. 
Moreover 
we can easily prove that 
if $\bE$ is consistent, then 
$E(\bE)$ is also consistent.
\end{para}

\begin{df}[\bf Additive theory]
\label{df:additive theory}
$\mathrm{(1)}$ 
A full subcategory $\calR$ of $\RelEx$ 
is {\it closed under extensions} if 
$\calR$ contains the trivial relative exact category $0$ and 
if for any $\bE$ in $\calR$, $E(\bE)$ is also in $\calR$.\\
$\mathrm{(2)}$ 
Let $\fA$ 
be a functor from a full subcategory $\calR$ of $\RelEx$ 
closed under extensions to an additive category $\cB$. 
We say that $\fA$ is an {\it additive theory} if 
for any relative exact category $\bE$ in $\calR$, 
the following projection is an isomorphism 
$$
\displaystyle{\begin{pmatrix}\fA(s)\\ \fA(q)\end{pmatrix}:
\fA(E(\bE)) \to \fA(\bE)\oplus \fA(\bE)}.
$$
\end{df}

By the proof of Crollary~2 of the additivity theorem in \cite{Qui73}, 
we get the additivity for characteristic filtration: 

\begin{para}[\bf Characteristic filtration]
\label{para:characteristic filtration}
A {\it characteristic filtration} of 
a functor $f$ between exact categories $\cE' \to \cE$ 
is a finite sequence $0=f_0 \to f_1 \to \cdots f_n=f$ of 
natural transformations between exact functors from $\cE'$ to $\cE$ 
such that $f_{p-1}(x) \to f_p(x)$ is an inflation in $\cE$ 
for every $x$ in $\cE'$ and $1\leq p\leq n$, 
and induced quotient functors $f_p/f_{p-1}$ are exact 
for $1\leq p\leq n$. 
\end{para}

\begin{lem}[\bf Additivity for characteristic filtration]
\label{lem:characteristic filtration}
Let $\fA:\ExCat \to \cB$ be a categorical homotopy invariant additive theory and 
$f:\cE \to \cE'$ be an exact functor between 
exact categories equiped with a characteristic filtration 
$0=f_0\subset \cdots \subset f_n=f$. Then 
$$\fA(f)=\sum_{p=1}^n\fA(f_p/f_{p-1}).$$
\qed
\end{lem}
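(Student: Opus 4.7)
My plan is to reduce the claim, by telescoping along the given filtration, to the following identity: for any conflation of exact functors $F' \rinf F \rdef F''$ between essentially small exact categories $\cE \to \cE'$, one has $\fA(F) = \fA(F') + \fA(F'')$ as morphisms in $\cB$. Granted this, applying it to each conflation $f_{p-1} \rinf f_p \rdef f_p/f_{p-1}$ of the characteristic filtration yields $\fA(f_p) = \fA(f_{p-1}) + \fA(f_p/f_{p-1})$, and iterating from $p = 1$ to $p = n$, using $\fA(f_0) = \fA(0) = 0$, produces the desired formula $\fA(f) = \sum_{p=1}^n \fA(f_p/f_{p-1})$.

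For the base case, a conflation $F' \rinf F \rdef F''$ of exact functors assembles tautologically into an exact functor $G \colon \cE \to E(\cE')$, $x \mapsto (F'(x) \rinf F(x) \rdef F''(x))$, satisfying $sG = F'$, $tG = F$ and $qG = F''$. So it suffices to establish the identity $\fA(t) = \fA(s) + \fA(q)$ of morphisms $\fA(E(\cE')) \to \fA(\cE')$ in $\cB$. I would extract this from the additivity hypothesis by introducing the two canonical splittings $i, j \colon \cE' \to E(\cE')$, where $i(x)$ is the conflation $x \rinf x \rdef 0$ with first map $\id_x$, and $j(x)$ is the conflation $0 \rinf x \rdef x$ with second map $\id_x$. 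Direct inspection yields $si = ti = \id_{\cE'}$, $qi = 0$, $sj = 0$ and $tj = qj = \id_{\cE'}$. Since $(\fA(s), \fA(q)) \colon \fA(E(\cE')) \isoto \fA(\cE') \oplus \fA(\cE')$ is the additivity isomorphism, these equalities force $\fA(i)$ and $\fA(j)$ to be the two biproduct injections, so that $\id_{\fA(E(\cE'))} = \fA(i)\fA(s) + \fA(j)\fA(q)$. Precomposing this identity with $\fA(t)$ and using $ti = tj = \id_{\cE'}$ produces exactly $\fA(t) = \fA(s) + \fA(q)$.

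The essentially only subtlety is that the quotient functors $f_p/f_{p-1}$ in a characteristic filtration are defined only up to canonical natural isomorphism, so the composites $tG_p$ and $qG_p$ used above agree with $f_p$ and $f_p/f_{p-1}$ only up to natural isomorphism, not on the nose. This is precisely where the hypothesis of categorical homotopy invariance enters: viewing $\ExCat$ inside $\RelEx$ with the trivial weak equivalences $i_{\cE}$, every natural isomorphism of exact functors is a relative natural equivalence, so $\fA$ sends source and target to the same morphism in $\cB$. Beyond this bookkeeping point, the argument is a purely formal consequence of the additivity isomorphism, and I do not anticipate further serious obstacles.
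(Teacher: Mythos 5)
Your argument is correct, and it is essentially the proof the paper is implicitly invoking by citing Corollary~2 of the additivity theorem in Quillen's paper: reduce to the statement that for a conflation $F'\rinf F\rdef F''$ of exact functors one has $\fA(F)=\fA(F')+\fA(F'')$, deduce that from the additivity isomorphism together with the two canonical sections $i,j:\cE'\to E(\cE')$, and then telescope along the filtration. One small terminological slip: the final step is \emph{post}composing the identity $\id_{\fA(E(\cE'))}=\fA(i)\fA(s)+\fA(j)\fA(q)$ with $\fA(t)$ (or precomposing $\fA(t)$ with it), not ``precomposing this identity with $\fA(t)$.'' Also note that, since the paper's Definition of characteristic filtration already fixes the quotient functors $f_p/f_{p-1}$ as part of the data, the functors $G_p$ satisfy $sG_p=f_{p-1}$, $tG_p=f_p$, $qG_p=f_p/f_{p-1}$ strictly; the categorical homotopy invariance hypothesis is a safety net covering the usual ``unique up to canonical isomorphism'' issues, but your invocation of it, while not strictly needed at that step, certainly does no harm.
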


\subsection{Localizing theory}
\label{subsec:loc theory}

\begin{df}[\bf Exact sequence]
\label{df:exact sequence}
$\mathrm{(1)}$ 
We say that a sequence of triangulated categories 
$\cT \onto{i} \cT' \onto{j} \cT''$ is {\it exact} if 
$i$ is fully faithful, the composition $ji$ is zero and 
the induced functor from $j$, $\cT'/\cT \to \cT''$ is {\it cofinal}. 
The last condition means that it is fully faithful and 
every object of $\cT''$ is 
a direct summand of an object of $\cT'/\cT$.\\
$\mathrm{(2)}$ 
A sequence $\bE \onto{u} \bF \onto{v} \bG$ 
of strict relative exact categories is 
{\it derived exact}
if the induced sequence of triangulated categories 
$\calD_b(\bE) \onto{\calD_b(u)} \calD_b(\bF) \onto{\calD_b(v)} \calD_b(\bG)$ is 
exact. 
We sometimes denote the sequence above by $(u,v)$. 
For a full subcategory $\calR$ of $\RelEx_{\operatorname{strict}}$, 
we let $E(\calR)$ denote the category of 
exact sequences in $\calR$. 
We define three functors $s^{\calR}$, $m^{\calR}$ and $q^{\calR}$ from 
$E(\calR)$ to $\calR$ which send an exact sequence 
$\bE \to \bF \to \bG$ to $\bE$, $\bF$ and $\bG$ respectively.
\end{df}

\begin{ex}[\bf Exact sequence of abelian categories]
\label{ex:exact seq of ab cat}
Let $\cS$ be a Serre subcategory of an abelian category $\cB$. 
Then the canonical sequence 
$$\cS \to \cB \to \cB/\cS$$
is derived exact 
if $\cS$ and $\cB$ satisfy the following condition $\mathrm{(\ast)}$:

\sn
$\mathrm{(\ast)}$ 
For any monomorphism $x\rinf y$ in $\cB$ with $x$ in $\cS$, 
there exists a morphism $y \to z$ with $z$ in $\cS$ such that 
the composition $x \to y \to z$ is a monomorphism. 
(See \cite[4.1]{Gro77} and \cite[1.15]{Kel99}).
\end{ex}

\begin{df}[\bf Localizing theory]
\label{df:Loc th}
$\mathrm{(1)}$ 
A {\it localizing theory} $(\frakL,\partial)$ 
from a full subcategory $\calR$ of $\RelEx_{\operatorname{strict}}$ to 
a triangulated category $(\cT,\Sigma)$ is a pair 
of functor $\frakL:\calR \to \cT$ and a natural transformation 
$\partial:\frakL q \to \Sigma \frakL s$ between functors 
$E(\calR) \substack{\overset{s}{\to}\\ \underset{q}{\to}} 
\calR \onto{\frakL} \cT$ 
which sends a derived exact sequnece $\bE \onto{i} \bF \onto{j}\bG$ in $\calR$ 
to a distingushed triangle 
$\frakL(\bE)\onto{\frakL(i)}\frakL(\bF)\onto{\frakL(j)}\frakL(\bG)
\onto{\partial_{(i,j)}}\Sigma \frakL(\bE)$ in $\cT$.\\
$\mathrm{(2)}$ 
A localizing theory $(\frakL,\partial)$ is {\it fine} if $\frakL$ is 
a categorical homotopy invariant functor 
and $\frakL$ commutes with filtered colimits.
\end{df}

\begin{rem}
\label{rem:localizing theory}
$\mathrm{(1)}$ 
The non-connective $K$-theory on $\RelEx_{\operatorname{consist}}$ 
studied in \cite{Sch11}, \cite{Moc13} 
is a fine localization theory.\\
$\mathrm{(2)}$ 
(\cf \cite[7.9]{Moc13}). 
Let $\frakL$ be a localization theory on a full subcategory $\calR$ 
of $\RelEx_{\operatorname{strict}}$. 
Then\\
$\mathrm{(i)}$ 
$\frakL$ is a derived invariant functor. 
Namely 
if a morphism $\bE \to \bF$ in $\calR$ 
is a derived equivalence, 
then the induced morphism 
$\frakL(\bE)\to \frakL(\bF)$ is an isomorphism.\\ 
$\mathrm{(ii)}$ 
If further we assume that $\calR$ is closed under extensions and 
if $\frakL$ is categorical homotopy invariant, 
then we can easily prove that $\frakL$ is an additive theory. 
\end{rem}

\subsection{Nilpotent invariance}
\label{subsec:nilp inv th}

In this subsection, we define the notion about nilpotent invariant functors.

\begin{df}[\bf Serre radical]
\label{df:Serre radical}
Let $\cB$ be an abelian category and $\cF$ a full subcategory of $\cB$. 
We write ${}^S\!\!\!\sqrt{\cF}$ for intersection of all Serre subcategories 
which contain $\cF$ and call it the {\it Serre radical of $\cF$}. 
\end{df}

For noetherian abelian categories, 
we give a characterization of Serre radicals of full subcategories.

\begin{df}[\bf Admissible subquotient]
\label{df:subquotient}
Let $\cE$ be an exact category and $a$ and $b$ objects in $\cE$. 
We say that $a$ is an {\it admissible subquotient} of $b$ 
if there exists a filtration of inflations 
$$b=b_0\linf b_1\linf b_2$$
such that $b_1/b_2\isoto a$.
\end{df}

\begin{prop}
\label{prop:Serre radical}
{\rm (\cf \cite[3.1]{Her97}, \cite[2.2]{Gar09}).} 
Let $\cB$ be a noetherian abelian category, 
$\cF$ a full subcategory of $\cB$ and 
$x$ an object in $\cB$. 
Then $x$ is in ${}^S\!\!\!\sqrt{\cF}$ if and only if there exists 
a finite filtration of admissible subobjects 
$$x=x_0\linf x_1\linf x_2\linf x_3\linf \cdots \linf x_n=0$$
such that for every $i<n$, $x_i/x_{i+1}$ is an admissible subquotinet 
of an object $\cF$.
\qed
\end{prop}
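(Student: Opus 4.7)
The plan is to prove the two implications separately. The ``if'' direction is essentially formal: any Serre subcategory $\cS$ containing $\cF$ is closed under admissible subobjects, quotients and extensions, so every admissible subquotient $x_i/x_{i+1}$ of an object of $\cF$ lies in $\cS$; a downward induction on $i$ starting from $x_n=0$, applied to the conflations $x_{i+1}\rinf x_i\rdef x_i/x_{i+1}$, then forces $x=x_0\in\cS$. Taking $\cS={}^S\!\!\!\sqrt{\cF}$ yields the conclusion.

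For the ``only if'' direction, I would let $\cG\subset\cB$ denote the full subcategory of objects admitting a finite filtration of the stated type, and show that $\cG$ is itself a Serre subcategory containing $\cF$. The inclusion $\cF\subset\cG$ is immediate, since each $f\in\cF$ admits the length-one filtration $f\linf 0$ with quotient $f$, and $f$ is itself an admissible subquotient of $f$ via the trivial filtration $f\linf f\linf 0$. By the minimality of ${}^S\!\!\!\sqrt{\cF}$ as the smallest Serre subcategory containing $\cF$, it then suffices to verify the three Serre closure axioms for $\cG$.

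All three closures are obtained by combining filtrations in the evident way. For an inflation $y\rinf x$ with $x\in\cG$ carrying a filtration $\{x_i\}$, set $y_i:=y\cap x_i$; the induced monomorphism $y_i/y_{i+1}\rinf x_i/x_{i+1}$ embeds each subquotient into an admissible subquotient of an object of $\cF$. For a deflation $x\rdef z$ with kernel $k$, set $z_i$ to be the image of $x_i$ in $z$; then $z_i/z_{i+1}$ is a quotient of $x_i/x_{i+1}$. For an extension $y\rinf x\rdef z$ with $y,z\in\cG$, pull back the filtration of $z$ to a filtration of $x$ whose last term is $y$, and splice onto the filtration of $y$ (viewed inside $x$) to obtain a filtration of $x$ of the required form.

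To glue these constructions together, the one background fact needed is the elementary observation that in an abelian category a subobject, or a quotient, of an admissible subquotient of $f$ is itself an admissible subquotient of $f$: if $a=f_1/f_2$ with $f_2\linf f_1\linf f$ and $b\subset a$, lift $b$ to $b'$ with $f_2\linf b'\linf f_1$, whence $b=b'/f_2$ is realized by $f_2\linf b'\linf f$; the case of quotients is dual. I regard this lemma as the only mildly non-trivial ingredient, and the rest of the argument is routine bookkeeping with filtrations, so I do not anticipate a substantive obstacle.
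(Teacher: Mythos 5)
Your proof is correct and complete, and it fills in an argument that the paper itself does not supply: the proposition in the paper carries a bare \verb|\qed| with citations to Herzog and Garkusha, so there is no internal proof to compare against. Your route is the standard one for characterizing the Serre subcategory generated by a full subcategory: check directly that the class $\cG$ of finitely-filtered objects is itself a Serre subcategory containing $\cF$, then use minimality of the Serre radical for one inclusion and the ``if'' induction for the other. The bookkeeping is all sound --- in particular $(y\cap x_i)/(y\cap x_{i+1})\rinf x_i/x_{i+1}$ for subobjects, $z_i/z_{i+1}$ a quotient of $x_i/x_{i+1}$ for quotients, and the splice of the pulled-back filtration with the filtration of the kernel for extensions --- and your closing lemma (that subobjects and quotients of an admissible subquotient of $f$ are again admissible subquotients of $f$) is exactly the glue needed.

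Two small remarks, neither a gap. First, your argument nowhere uses that $\cB$ is noetherian; the characterization holds in any abelian category, and the hypothesis is presumably carried along only because the paper's ambient setting is noetherian (and the cited sources work in locally coherent Grothendieck categories, where noetherian objects have a specific meaning). Second, in an abelian category every mono and epi is admissible, so the word ``admissible'' in the statement is automatic; if one wanted the same statement for a general exact category, the intersections, images and preimages you use would need to be re-examined, but for abelian $\cB$ the argument is as routine as you claim.
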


\begin{df}[\bf Nilpotent invariance]
\label{df:nipotent invariance}
Let $\calR$ be a full subcategory of $\RelEx$ which contains 
$\AbCat$ the category of essentially small abelian categories. 
A functor $\fN\colon\calR \to \cC$ is 
{\it nilpotent invariant} if 
for any noetherian abelian category 
$\cB$ and any full subcategory $\cF$ such that 
${}^S\!\!\!\sqrt{\cF}=\cB$ and $\cF$ is closed under 
finite direct sums, sub- and quotient objects, 
the inclusion functor $\cF \rinc \cB$ induces an isomorphism 
$\fN(\cF)\isoto\fN(\cB)$ in $\cC$. 
\end{df}

\begin{ex}
\label{ex:nilpotent invariance}
The connective and the non-connective $K$-theory are nilpotent 
invariant by d\'evissage theorem in \cite{Qui73} and 
Theorem~7 in \cite{Sch06}. 
\end{ex}

\section{Graded categories}
\label{sec:graded cat}

In this section, we will introduce the notion of 
(noetherian) graded categories 
over categories and calculate a fine localizing theory of 
noetherian graded categories 
over noetherian abelian categories.

\subsection{Fundamental properties of graded categories}
\label{subsec:grad cat}

As in the 
results \cite{Ser55}, \cite{AZ94}, \cite{Pol05} and \cite{GP08}, 
the category of finitely generated graded objects understudies 
the category of coherent sheaves over projective spaces. 
We define the notion of graded categories over categories and study 
the fundamental properties. 
It is an abstract version of graded modules. 
See for the motivational Example~\ref{ex:gradedcategory}. 

\begin{para}
\label{para:cat<n>}
For a positive integer $n$, we define the category 
$\langle n\rangle$ as follows. 
The class of objects of $\langle n\rangle$ is just the set of all natural numbers 
$\bbN$. 
The class of morphisms of $\langle n\rangle$ is generated by 
morphisms $\psi^{i}_m\colon m \to m+1$ 
for any $m$ in $\bbN$ and $1\leq i\leq n$ 
which subject to the equalities 
$\psi^i_{m+1}\psi^j_m=\psi^j_{m+1}\psi^i_m$ 
for each $m$ in $\bbN$ and $1\leq i,\ j \leq n$. 
\end{para}

\begin{df}[\bf Graded categories]
\label{df:graded cat}
For any positive integer $n$ and any category $\cC$, 
we put $\cC_{\gr}[n]:=\HOM(\langle n\rangle,\cC)$ and call it 
the {\it category of {\rm (}$n$-{\rm )}graded category over $\cC$}. 
For any object $x$ and 
any morphism $f\colon x \to y$ in $\cC_{\gr}[n]$, 
we denote $x(m)$, $x(\psi_m^i)$ and $f(m)$ 
by $x_m$, $\psi_m^{i,x}$ or shortly 
$\psi_m^i$ and $f_m$ respectively. 
\end{df}

\begin{rem}
\label{rem:limit in graded cat}
We can calculate a (co)limit in $\cC_{\gr}[n]$ by 
term-wise (co)limit in $\cC$. 
In particular, if 
$\cC$ is additive (resp. abelian) 
then $\cC_{\gr}[n]$ is also additive (resp. abelian). 
Moreover if 
$\cC$ is a category with cofibration 
(resp. an exact category), 
then $\cC_{\gr}[n]$ naturally becomes 
a category with cofibration 
(resp. an exact category). 
Here a sequence 
$x \to y \to z$ is a conflation 
(resp. a morphism $x \to y$ is a cofibration) 
if it is term-wisely in $\cC$. 
Moreover if $w$ is a class of morphisms in $\cC$ 
which satisfies the axioms of Waldhausen categories 
(and its dual), then 
 the class of all morphisms $lw$ in $\cC_{\gr}[n]$ 
consisting of 
those morphisms $f$ such that $f_m$ is in $w$ for 
all natural number $m$ also satisfies the axioms 
of Waldhausen categories (and its dual).
\end{rem}

We can prove the following lemma and corollary: 

\begin{lem}
\label{lem:Hom hereditary}
Let $\cC$, $\calD$ and $\cI$ be categories and $f:\cC \to \calD$ a functor. 
If $f$ is faithful {\rm (}resp. fully faithful{\rm )}, 
then $\HOM(\cI,f):\HOM(\cI,\cC) \to \HOM(\cI,\calD)$ is faithful 
{\rm (}resp. fully faithful{\rm )}.
\qed
\end{lem}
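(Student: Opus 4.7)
The plan is to verify both assertions by working pointwise on objects of $\cI$, exploiting the fact that natural transformations in $\HOM(\cI,\cC)$ are families of morphisms in $\cC$ indexed by $\Ob(\cI)$ subject to naturality squares.

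For the faithful case, I would take two natural transformations $\alpha, \beta\colon F\Rightarrow G$ between functors $F,G\colon\cI\to\cC$ and assume $\HOM(\cI,f)(\alpha)=\HOM(\cI,f)(\beta)$. Unpacking, this means $f(\alpha_i)=f(\beta_i)$ for every object $i$ of $\cI$. Since $f$ is faithful, $\alpha_i=\beta_i$ for each $i$, hence $\alpha=\beta$ as natural transformations.

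For the fully faithful case, I would start with functors $F,G\colon\cI\to\cC$ and an arbitrary natural transformation $\gamma\colon fF\Rightarrow fG$. For each object $i$ of $\cI$, by fullness of $f$ there is a (unique, by faithfulness) morphism $\alpha_i\colon F(i)\to G(i)$ with $f(\alpha_i)=\gamma_i$. The only nontrivial point is to check that the family $\{\alpha_i\}$ is natural, i.e. that for every morphism $u\colon i\to j$ in $\cI$ one has $G(u)\alpha_i=\alpha_j F(u)$. Applying $f$ to both sides and using $f(\alpha_i)=\gamma_i$, $f(\alpha_j)=\gamma_j$, together with the naturality of $\gamma$ with respect to $u$, gives $f(G(u)\alpha_i)=\gamma_j\cdot fF(u)=fG(u)\cdot\gamma_i=f(\alpha_j F(u))$. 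Faithfulness of $f$ then upgrades this to the required equality in $\cC$, so $\alpha$ is natural and $\HOM(\cI,f)(\alpha)=\gamma$; combined with the faithful case this gives fully faithfulness.

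Both steps are essentially bookkeeping; the only mildly delicate point is recognising that the fully faithful case still requires faithfulness in order to verify naturality of the lifted transformation $\alpha$, so the argument genuinely uses both halves of the hypothesis rather than just fullness.
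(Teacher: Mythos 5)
Your proof is correct and is the standard argument; the paper omits the proof entirely (marking it \qed as evident), and your pointwise verification with the naturality check lifted along $f$ is exactly what is intended. Your closing remark that faithfulness of $f$ is genuinely needed to verify naturality of the lifted transformation is a worthwhile observation and explains why the lemma is stated for fully faithful rather than merely full functors.
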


\begin{cor}
\label{cor:Hom hereditary}
Let $f:\cC \to \calD$ be a functor between categories 
and $n$ a positive integer. 
If $f$ is faithful {\rm (}resp. fully faithful{\rm )}, 
then the induced functor $f_{\gr}[n]:\cC_{\gr}[n] \to \calD_{\gr}[n]$ 
is faithful 
{\rm (}resp. fully faithful{\rm )}.
\qed
\end{cor}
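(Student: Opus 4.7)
The plan is to simply specialize Lemma~\ref{lem:Hom hereditary} to the indexing category $\cI = \langle n \rangle$. By the very definition in Definition~\ref{df:graded cat} we have
\[
\cC_{\gr}[n] = \HOM(\langle n \rangle, \cC), \qquad
\calD_{\gr}[n] = \HOM(\langle n \rangle, \calD),
\]
and the induced functor $f_{\gr}[n]$ is, by construction, nothing other than $\HOM(\langle n\rangle, f)$ in the sense of post-composition with $f$. Hence every assertion about $\HOM(\cI, f)$ translates directly into the corresponding assertion about $f_{\gr}[n]$.

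Concretely, I would first observe that a morphism $\alpha: x \to y$ in $\cC_{\gr}[n]$ is the datum of its components $\alpha_m: x_m \to y_m$ in $\cC$ compatible with the generating maps $\psi_m^i$, and that $f_{\gr}[n](\alpha)$ has components $f(\alpha_m)$. For faithfulness, given $\alpha, \beta$ with $f_{\gr}[n](\alpha) = f_{\gr}[n](\beta)$, we get $f(\alpha_m) = f(\beta_m)$ for each $m \in \bbN$, so faithfulness of $f$ yields $\alpha_m = \beta_m$ for all $m$ and thus $\alpha = \beta$. For fullness, given a morphism $\gamma: f_{\gr}[n](x) \to f_{\gr}[n](y)$ in $\calD_{\gr}[n]$, fullness of $f$ supplies preimages $\alpha_m: x_m \to y_m$ with $f(\alpha_m) = \gamma_m$; the relations $\gamma_{m+1}\psi_m^{i,f x} = \psi_m^{i,f y}\gamma_m$ then transport back through $f$ using faithfulness to give $\alpha_{m+1}\psi_m^{i,x} = \psi_m^{i,y}\alpha_m$, so the $\alpha_m$ assemble into a morphism $\alpha$ in $\cC_{\gr}[n]$ with $f_{\gr}[n](\alpha) = \gamma$.

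There is no real obstacle here: the corollary is a formal consequence of Lemma~\ref{lem:Hom hereditary}, and the only thing one needs to recognize is that $\langle n \rangle$ is an ordinary (small) category and that the construction $\cC \mapsto \cC_{\gr}[n]$ is precisely the functor $\HOM(\langle n\rangle, -)$ at the level of $1$-morphisms. Thus the proof amounts to a single sentence citing the lemma with $\cI = \langle n \rangle$.
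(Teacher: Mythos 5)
Your proof is correct and matches the paper's intent exactly: the corollary is stated immediately after Lemma~\ref{lem:Hom hereditary} precisely because it is the specialization $\cI = \langle n\rangle$, and the paper leaves both as \qed exercises. Your added component-wise verification is a correct unpacking of the lemma itself, but the one-sentence citation is all that is needed.
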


\begin{para}
\label{para:cC'df}
For an exact category $\cE$ and a positive integer $n$, 
we denote the full subcategory of all noetherian objects 
in $\cE_{\gr}[n]$ by $\cE_{\gr}'[n]$. 
In particular if $\cE$ is an abelian category then 
$\cE'_{\gr}[n]$ is a noetherian abelian category 
by Lemma~\ref{lem:noetherian}. 
In this case, we call $\cE'_{\gr}[n]$ the 
{\it noetherian ($n$-)graded category over $\cE$}. 
\end{para}

\begin{df}[\bf Degree shift]
\label{nt:degreeshiftofobjects}
Let $\cC$ be a category with a specific zero object $0$ 
and $k$ an integer. 
We define the functor 
$(k):\cC_{\gr}[n] \to \cC_{\gr}[n]$, $x \mapsto x(k)$. 
For any object $x$ and 
any morphism $f:x \to y$ in $\cC_{\gr}[n]$, 
we define an object $x(k)$ and 
a morphism $f(k):x(k) \to y(k)$ in $\cC_{\gr}[n]$ as follows. 
We put 
$$
x(k)_m=
\begin{cases}
x_{m+k} & \text{if $m\geq -k$}\\
0 & \text{if $m < -k$}
\end{cases},\ \ 
\psi^{i,x(k)}_m:=
\begin{cases}
\psi^{i,x}_{m+k} & \text{if $m\geq -k$}\\
0 & \text{if $m < -k$}
\end{cases}
\ \ \text{and}\ \ 
f(k)_m:=
\begin{cases}
f_{m+k} & \text{if $m\geq -k$}\\
0 & \text{if $m < -k$}
\end{cases}
.$$
For any object $x$ in $\cC_{\gr}[n]$ and any positive integer $k$, 
we have the canonical morphism $\psi_x^{i,k}(=\psi^i):x(-k) \to x(-k+1)$ 
defined by $\psi_{m-k}^i:{x(-k)}_m=x_{m-k} \to {x(-k+1)}_m=x_{m-k+1}$ 
for each $m$ in $\bbN$. 
We consider a pair $(\cC_{\gr}[n],(-1))$ as an object 
in $\End \Cat$.
\end{df}

\begin{rem}
\label{rem:degree shift}
If $\cE$ is an exact category, then 
for any intger $k$, the functor 
$(k):\cE_{\gr}[n] \to \cE_{\gr}[n]$ is exact. 
Moreover this functor induce the exact functor 
$(k):\cE_{\gr}'[n] \to \cE_{\gr}'[n]$.
\end{rem}

\begin{df}
\label{nt:compoftranslation}
For any natural numbers $m$ and $k$, 
any object $x$ in $\cC_{\gr}[n]$ 
and any multi index $\ii=(i_1,\cdots,i_n)\in\bbN^n$, 
we define the morphism 
$\psi^{\ii,k}_x(=\psi^{\ii}):x(-(\sum_{j=1}^n i_j+k)) 
\to x(-k)$ by 
$$\psi^{\ii}={(\psi^n)}^{i_n}{(\psi^{n-1})}^{i_{n-1}}\cdots
{(\psi^2)}^{i_2}{(\psi^1)}^{i_1}.$$
\end{df}

\begin{df}[\bf Free graded object]
\label{nt:free graded object}
Let $\cC$ be an additive category 
and $n$ a positive integer. 
We define the functor 
$\cF_{\cC}[n](=\cF[n]):\cC \to \cC_{\gr}[n]$ 
in the following way. 
For any object $x$ in $\cC$, 
we define the object 
$\cF[n](x)=x[\{\psi^i\}_{1\leq i\leq m}]$ 
in $\cC_{\gr}[n]$ as follows. 
We put 
$$\cF[n](x)_m:=\underset{
\substack{\ii=(i_1,\cdots,i_n)\in\bbN^n \\ 
\sum^n_{j=1}i_j=m}}{\bigoplus} 
x_{\ii}$$ 
where $x_{\ii}$ is a copy of $x$. 
$x_{\ii}$ ($\displaystyle{\sum_{j=1}^n i_j=m}$) components of the morphisms 
$\psi_m^{k,\cF[n](x)}:\cF[n](x)_m \to \cF[n](x)_{m+1}$  
defined by  
$\id:x_{\ii} \to x_{\ii+\fe_k}$ 
where $\fe_k$ is the $k$-th unit vector. 
\end{df}

\begin{para}
\label{para:can mor} 
Let $\cC$ be an additive category and $k$ a natural number. 
For any object $x$ in $\cC_{\gr}[n]$, 
we have the canonical morphism 
$\cF[n](x_k)(-k) \to x$ which is defined as follows.
For any $m\geq k$ and any $\ii=(i_1,\cdots,i_n)\in\bbN^n$ 
such that $\displaystyle{\sum_{j=1}^n i_j=m-k}$, 
on the $x_{\ii}$ component of $\cF[n](x_k)(-k)_m$, 
the morphism is defined by 
$\psi_m^{\ii}:x_{\ii} \to x_m$. 
\end{para}

\begin{rem}
\label{rem:adjointnessofF[n]}
Let $\cC$ be an additive category. 
Then the functor $\cF[n]:\cC \to \cC_{\gr}[n]$ is 
the left adjoint functor of the functor 
$\cC_{\gr}[n] \to \cC$, $y \mapsto y_0$. 
Namely for any object $x$ in $\cC$ 
and any object $y$ in $\cC_{\gr}[n]$, we have 
a functorial isomorphism 
$\Hom_{\cC}(x,y_0)\isoto\Hom_{\cC_{\gr}[n]}(\cF[n](x),y)$, 
which sends $f$ to 
$(\cF[n](x)\onto{\cF[n](f)}\cF[n](y_0) \to y)$.
\end{rem}

\begin{ex}
\label{ex:mor between freeobj} 
For any objects $x$ and $y$ in an additive category $\cC$, 
any positive integer $k$,  
and any family of morphisms 
$\{c_{\ii}\}_{\ii=(i_1,\cdot,i_n)\in\bbN^n,\ \sum i_j=k}$ 
from $x$ to $y$, 
we define the morphism $\sum c_{\ii}\psi^{\ii}:\cF[n](x)(-k) \to \cF[n](y)$ 
by $c_{\ii}:x_{\jj} \to x_{\jj+\ii}$ on its $x_{\jj}$ component to 
$x_{\jj+\ii}$ component. 
\end{ex}

\begin{lem}
\label{lem:fundamental facts about}
Let $\cA$ be a noetherian abelian category and 
$n$ a positive integer. 
Then\\
$\mathrm{(1)}$ 
For any object $x$ in $\cA$, 
$\cF[n](x)$ is a noetherian object in $\cA_{\gr}[n]$. 
In particular, 
we have the exact functor 
$$\cF_{\cA}[n]:\cA \to \cA'_{\gr}[n].$$
$\mathrm{(2)}$ 
For any object $x$ in $\cA'_{\gr}[n]$, 
there exists a natural number $m$ such that the canonical morphism 
as in \ref{para:can mor} 
$$\displaystyle{\overset{m}{\underset{k=0}{\bigoplus}}
\cF[n](x_k)(-k) \to x}$$ 
is an epimorphism.
\end{lem}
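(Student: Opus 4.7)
The plan is to reduce part $\mathrm{(1)}$ to the Abstract Hilbert Basis Theorem by constructing an exact and faithful functor $\Phi_n\colon\cA_{\gr}[n]\to\End^n\Lex\cA$ under which $\cF[n](x)$ is carried to an iterated polynomial object, and to derive part $\mathrm{(2)}$ by a direct ascending-chain argument on image subobjects. For part $\mathrm{(1)}$, note first that the target $\End^n\Lex\cA$ is abelian by iterating Remark~\ref{rem:limitinendcat}. I will set
\[
\Phi_n(x_\bullet) := \bigl(\bigoplus_{m\geq 0} y_\cA(x_m);\,\Psi^1,\ldots,\Psi^n\bigr),
\]
where $y_\cA\colon\cA\to\Lex\cA$ is the Yoneda embedding and $\Psi^i$ acts on the $m$-th summand via $y_\cA(\psi^i_m)\colon y_\cA(x_m)\to y_\cA(x_{m+1})$. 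The commutation relations on the generators of $\langle n\rangle$ translate directly into pairwise commutation of the $\Psi^i$, so $\Phi_n$ is well-defined. Exactness follows from exactness of the Yoneda embedding (\ref{para:GQ-emmbeding}), exactness of countable direct sums in the Grothendieck category $\Lex\cA$, and the detection of conflations in iterated $\End$-categories on underlying objects (Remark~\ref{rem:limitinendcat}); faithfulness is immediate since the components $f_m$ of a morphism in $\cA_{\gr}[n]$ can be read off $\Phi_n(f)$ summand by summand.

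A routine unwinding of definitions (\ref{nt:free graded object} and \ref{para:polynomial category}) next identifies $\Phi_n(\cF[n](x))$ with the iterated polynomial object $y_\cA(x)[t_1,\ldots,t_n] := ((y_\cA(x)[t_1])[t_2]\cdots)[t_n]$ in $\End^n\Lex\cA$, where each $t_i$ encodes the shift in the $i$-th coordinate of $\bbN^n$. The Abstract Hilbert Basis Theorem (Theorem~\ref{thm:Abst Hilb basis}) applied iteratively will establish that this object is noetherian in $\End^n\Lex\cA$: from $x$ noetherian in $\cA$ one gets $x[t_1]$ noetherian in $\End\Lex\cA$, hence $x[t_1]\in\cA[t_1]$, itself an essentially small noetherian abelian category by Definition~\ref{df:S-poly cat def}; applying the theorem to $\cA[t_1]$ with noetherian object $x[t_1]$ yields $(x[t_1])[t_2]$ noetherian in $\End\Lex(\cA[t_1])\cong\End^2\Lex\cA$, and one continues inductively. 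Lemma~\ref{lem:faithfulexact} applied to $\Phi_n$ then furnishes noetherianity of $\cF[n](x)$ in $\cA_{\gr}[n]$, and exactness of the induced functor $\cF_\cA[n]\colon\cA\to\cA'_{\gr}[n]$ is immediate from the finite-direct-sum description of each level.

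For part $\mathrm{(2)}$, I set $y_m := \im\bigl(\bigoplus_{k=0}^m\cF[n](x_k)(-k)\to x\bigr)\subset x$ in $\cA'_{\gr}[n]$; this is an ascending chain of subobjects of the noetherian object $x$ and hence stabilizes at some $m_0$. At each level $\ell$, the summand indexed by $k=\ell$ and $\ii=0$ contributes $\id_{x_\ell}\colon x_\ell\to x_\ell$, so the level-$\ell$ component of $y_m$ equals $x_\ell$ whenever $m\geq\ell$; combined with stabilization, $(y_{m_0})_\ell = (y_{\max(m_0,\ell)})_\ell = x_\ell$ at every $\ell$, whence $y_{m_0}=x$. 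The main obstacle is the iterated Hilbert basis step in part $\mathrm{(1)}$, specifically the identification $\End\Lex(\cA[t_1,\ldots,t_{k-1}])\cong\End^k\Lex\cA$ used at each inductive step, which is a Gabriel-type statement asserting that a locally noetherian Grothendieck category is the $\Lex$-category of its noetherian full subcategory; the remainder of the argument is a formal consequence of Lemma~\ref{lem:faithfulexact} together with the image-chain argument above.
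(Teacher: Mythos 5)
Your proof follows essentially the same route as the paper's: both construct the exact faithful functor $\cA_{\gr}[n]\to\End^n\Lex\cA$ sending $x$ to the total direct sum with its $n$ commuting shift endomorphisms, identify $\cF[n](x)$ with the iterated polynomial object $x[t_1,\ldots,t_n]$, invoke Theorem~\ref{thm:Abst Hilb basis} iteratively, conclude via Lemma~\ref{lem:faithfulexact}, and prove $\mathrm{(2)}$ by the same stabilizing ascending chain of image subobjects. You are also right to flag the identification $\End\Lex(\cA[t_1,\ldots,t_{k-1}])\cong\End^k\Lex\cA$ as the step requiring the Gabriel--Popescu fact that a locally noetherian Grothendieck category is $\Lex$ of its noetherian objects; the paper leaves this implicit in its citation of the Abstract Hilbert Basis Theorem.
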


\begin{proof}
$\mathrm{(1)}$ 
We define the functor 
$$\Gamma\colon \cA_{\gr}[n] \to \End^n\Lex\cA,\ x\mapsto 
\left (\bigoplus x_{\ii},\ \bigoplus\psi_m^1,\cdots,
\bigoplus \psi_m^n \right ) $$
where $\End^n$ means the $n$-times iteration of the functor $\End$. 
Since $\Lex \cA$ is Grothendieck abelian, 
the functor $\bigoplus$ is exact and 
therefore $\Gamma$ is an exact functor. 
Moreover for any morphism $f:x\to y$ in $\cA_{\gr}[n]$, 
the condition $\Gamma(f)=0$ obviously implies 
the condition $f=0$. 
Hence $\Gamma$ is faithful. 
We can easily check that for any object $x$ in $\cA$, 
we have the canonical isomorphism 
$\Gamma(\cF[n](x))\isoto x[t_1,\cdots,t_n]$ 
and 
$x[t_1,\cdots,t_n]$ is a noetherian object 
in $\End^n\Lex\cA$ by Theorem~\ref{thm:Abst Hilb basis}. 
Therefore $\cF[n](x)$ is noetherian 
in $\cA_{\gr}[n]$ 
by Lemma~\ref{lem:faithfulexact}. 

\sn
$\mathrm{(2)}$ 
We put $\displaystyle{z_l=\im\left (\bigoplus_{k=0}^l \cF[n](x_k)(-k) \to x\right )}$. 
Let us consider the ascending chain of subobjects in $x$
$$z_1\rinf z_2\rinf \cdots \rinf x.$$ 
Since $x$ is a noetherian object, 
there exists a natural number $m$ such that 
$z_m=z_{m+1}=\cdots$. 
We claim that the canonical morphism 
$$y:=\bigoplus_{k=0}^i\cF[n](x_k)(-k) \to x$$
is an epimorphism. 
If $k\leq m$, $y_k \to x_k$ is obviously an epimorphism. 
If $k>m$, then we have the equalities
$$\im(y_k \to x_k)={(z_m)}_k={(z_k)}_k=x_k.$$ 
Therefore we get the desired result.
\end{proof}

\begin{df}[\bf Finitely generated objects]
\label{nt:finitely generated obj}
Let $\cE$ be an exact category.\\
$\mathrm{(1)}$ 
An object $(x,u)$ in $\End \cE$ is {\it finitely generated} 
if there exists an object $y$ in $\cE$ 
and an epimorihsim $(y[t],t) \rdef (x,u)$ in $\End (\Lex \cE)$. 
Let us write $\End (\Lex\cE)_{\fingen}$ for 
the full subcategory of $\End(\Lex\cE)$ consisting of 
those finitely generated objects in $\End(\Lex\cE)$.\\
$\mathrm{(2)}$ 
An object $x$ in $\cE_{\gr}[n]$ is {\it finitely generated} 
if there exists a non-negative integer $n$ such that 
the canonical morphism 
$\displaystyle{\bigoplus_{k=0}^n\cF[n](x_k)(-k) \to x}$ 
as in Remark~\ref{rem:adjointnessofF[n]} 
is an epimorphism. 
We denote the full subcategory of $\cE_{\gr}[n]$ consisting 
of those finitely generated objects in $\cE_{\gr}[n]$ 
by $\cE_{\gr}[n]_{\fingen}$.
\end{df}

\begin{rem}
\label{rem:finitely generated} 
Let $f:\cB \to \cC$ be an exact functor from an exact category $\cB$ 
to an exact category $\cC$. 
Then\\
$\mathrm{(1)}$ 
For any object $x$ in $\cB$, we have the 
equality 
$f_{\gr}[n](\cF[n](x))=\cF[n](f(x))$.\\
$\mathrm{(2)}$ 
Therefore if $\cB$ is an abelian category, then 
$f$ induces an exact functor 
$f_{\gr}[n]_{\fingen}:\cB_{\gr}[n]_{\fingen} \to \cC_{\gr}[n]_{\fingen}$.\\
$\mathrm{(3)}$ 
Moreover if $\cB$ is an essentially small noetherian abelian category, 
then we have $\cB_{\gr}[n]_{\fingen}=\cB'_{\gr}[n]$ and 
$\End (\Lex \cB)_{\fingen}=\cB[t]$ by Lemma~\ref{lem:noetherian} $\mathrm{(1)}$, 
Remark~\ref{rem:other df of S-poly cat} and 
Lemma~\ref{lem:fundamental facts about}. 
\end{rem}

\begin{ex}
\label{ex:gradedcategory}
For a ring with unit $A$ and 
$\cE=\cM_A$, 
$\cE_{\gr}[n]_{\fingen}$ is just the category of 
finitely generated graded right $A[t_1,\cdots,t_n]$-modules 
$\cM_{A[t_1,\cdots,t_n],\gr}$.
\end{ex}

\begin{proof}
Any object $x$ in $\cM_{A[t_1,\cdots,t_n],\gr}$ 
is considered to be an object in $\cE_{\gr}[n]_{\fingen}$ 
in the following way. 
Let us define the functor $x'\colon\langle n\rangle \to \cE$ 
by $k \mapsto x_k$ and $(\psi^i:k\to k+1)\mapsto (t_i:x_k \to x_{k+1})$. 
The association $x \mapsto x'$ induces 
a category equivalence 
$\cM_{A[t_1,\cdots,t_n],\gr} \isoto \cE_{\gr}[n]_{\fingen}$.
\end{proof}

\begin{df}[\bf Canonical filtration] 
\label{df:canfilt}
For any object $x$ in $\cA'_{\gr}[n]$, 
we define the canonical filtration $F_{\bullet}x$ as follows. 
$F_{-1}x=0$ 
and for any $m\geq0$,
$$(F_mx)_k=
\begin{cases}
x_k & \text{if $k\leq m$}\\
\displaystyle{\sum_{\substack{\ii=(i_1,\cdots,i_n)
\in\bbN^n\\\Sigma i_j =k-m}} 
\im\psi_{m}^{\ii}} & \text{if $k> m$}
\end{cases}.$$
\end{df}

\begin{rem}
\label{rem:dim of x}
Since every object $x$ in $\cA'_{\gr}[n]$ 
is noetherian, there is the minimal integer $m$ such that 
$F_mx=F_{m+1}x=\cdots$. 
In this case, 
we can easily prove that $F_mx=x$. 
We call $m$ {\it degree} of $x$ and denote it by $\deg x$. 
\end{rem}

\subsection{Koszul homologies}
\label{subsec:Kos hom}

In this subsection, we define the Koszul homologies of 
objects in $\cA'_{\gr}[n]$ and as an application of 
the notion about Koszul homologies, 
we study a fine localizing theory of $\cA'_{\gr}[n]$. 
There is the ideology about {\it Koszul duality} for example \cite{Bei78} 
in this subsection 
behind the use of the Koszul homologies. 

\begin{df}[\bf Koszul complex]
\label{nt:Koszul complex}
Let $\cC$ be an additive category and $n$ a positive integer. 
For any object $x$ in $\cC_{\gr}[n]$, 
we define the {\it Koszul complex} $\Kos(x)$ associated with $x$ 
as follows. 
$\Kos(x)$ is a chain complex in $\cC_{\gr}[n]$ concentrated in 
degrees $0,\cdots,n$ whose component at degree $k$ is given by 
$\displaystyle{\Kos(x)_k\colon 
=\bigoplus_{\substack{\ii=(i_1,\cdots,i_n)\in[1]^n 
\\ \sum i_j=k}} 
x_{\ii}}$ 
where $[1]$ is the totally ordered set 
$\{0,1\}$ with the natural order and 
$x_{\ii}$ is a copy of $\displaystyle{x(-\sum_{j=1}^n i_j)}$ and 
whose boundary morphism $d^{\Kos(x)}_k\colon \Kos(x)_k \to \Kos(x)_{k-1}$ is 
defined by $\displaystyle{(-1)^{\sum_{t=j+1}^n i_j}\psi^j\colon x_{\ii} 
\to x_{\ii-\fe_j}}$ on its $x_{\ii}$ to $x_{\ii-\fe_j}$ component where 
$\fe_j$ is the $j$-th unit vector. 
The association $x \mapsto \Kos(x)$ defines the exact functor 
$$\Kos:\cC_{\gr}[n] \to \Ch(\cC_{\gr}[n]).$$
\end{df}

\begin{df}[\bf Koszul homologies]
\label{df:Koszul homologies}
Let $\cE$ be an idempotent complete exact category and $n$ a positive integer. 
We put $\cB:=\Lex \cE$. 
We define the family of functors $\{T_i:\cE_{\gr}[n] \to \cB_{\gr}[n]\}$ by 
$T_i(x):=\Homo_i(\Kos(x))$ for each $x$. 
$T_i(x)$ is said to be the {\it $i$-th Koszul homology} of $x$. 
Let us notice that for any conflation $x \rinf y \rdef z$ in 
$\cE_{\gr}[n]$, we have a long exact sequence 
$$\cdots \to T_{i+1}(z) \to T_i(x) \to T_i(y) \to T_i(z) 
\to T_{i-1}(x) \to \cdots.$$
\end{df}

\begin{df}[\bf Torsion free objects]
\label{df:Torson free object}
An object $x$ in $\cA'_{\gr}[n]$ is {\it torsion free} if 
$T_i(x)=0$ for any $i>0$. 
For each non-negative integer $m$, 
we denote the category of torsion free objects 
(of degree less than $m$) 
in $\cA'_{\gr}[n]$ by 
$\cA'_{\gr,\tf}[n]$ (resp. $\cA'_{\gr,\tf,m}[n]$). 
Since $\cA'_{\gr,\tf}[n]$, $\cA'_{\gr,\tf,m}[n]$ are 
closed under extensions in $\cA'_{\gr}[n]$, 
they become exact categories in the natural way. 
\end{df}

\begin{prop}
\label{prop:fun pro Kos hom}
For any objects $x$ in $\cA'_{\gr}[n]$ and $y$ in $\cA$, 
we have the following assertions.\\
$\mathrm{(1)}$ 
For any natural number $k$, 
$\cF[n](y)(-k)$ is torsion free.\\
$\mathrm{(2)}$ 
For any positive integer $s$, 
the assertion 
$T_0(x)_k=0$ for any $k \leq s$ implies $x_k=0$ for any $k \leq s$.\\
$\mathrm{(3)}$ 
We have the equality 
$$T_0(F_px)_k=
\begin{cases}
0 & \text{if $k>p$}\\
T_0(x)_k & \text{if $k\leq p$}
\end{cases}
.$$
$\mathrm{(4)}$ 
For any natural number $p$, 
there exists a canonical epimorphism 
$$\alpha^p\colon \cF[n](T_0(x)_p)(-p) \rdef F_px/F_{p-1}x.$$
$\mathrm{(5)}$ 
For any natural number $p$, 
$T_0(\alpha^p)$ is an isomorphism.\\
$\mathrm{(6)}$ 
If $T_1(x)$ is trivial, then $\alpha^p$ is an isomorphism.
\end{prop}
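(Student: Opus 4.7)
My plan is to dispatch (1)--(5) by direct computations on the Koszul complex and the canonical filtration, and then to attack (6) by downward induction on $p$ using (1)--(5).

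For (1), compute $\Kos(\cF[n](y))$ directly: its degree-$k$ component, after extracting the appropriate shifts, is the classical Koszul resolution of $y$ by the polynomial object, which is acyclic in positive homological degrees; since the shift functor $(-k)$ is exact and commutes with $\Kos$ up to relabeling, (1) follows at once. For (2), induct on $k$: $T_0(x)_0=x_0$, and once $x_0=\cdots=x_{k-1}=0$ the image of $\bigoplus_j \psi^j_{k-1}$ vanishes, so $T_0(x)_k=x_k$. For (3), at $k>p$ every multi-index $\jj$ with $\sum_l j_l=k-p\geq 1$ has some positive coordinate, so $\psi^{\jj}(x_p)\subset \sum_j \psi^j((F_p x)_{k-1})$, giving $(F_p x)_k=\sum_j \psi^j((F_p x)_{k-1})$ and hence $T_0(F_p x)_k=0$; at $k\leq p$, $F_p x$ and $x$ agree in degrees $k-1,k$ so $T_0(F_p x)_k=T_0(x)_k$.

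For (4), the identification $(F_p x/F_{p-1}x)_p=x_p/\sum_j \psi^j_{p-1}(x_{p-1})=T_0(x)_p$ provides, via the adjointness of Remark~\ref{rem:adjointnessofF[n]} combined with the shift adjunction, a canonical $\alpha^p$; its surjectivity in each degree $k\geq p$ is the identity $\sum_{\ii,\,\sum_l i_l=k-p}\psi^{\ii}(x_p)+(F_{p-1}x)_k=(F_p x)_k$, which is just the definition of $F_p$ pushed to the quotient. For (5), (1) together with the shift formula identify $T_0$ of the source with $T_0(x)_p$ concentrated in degree $p$; applying the Koszul long exact sequence to $F_{p-1}x\rinf F_p x\rdef F_p x/F_{p-1}x$ and using (3) identifies $T_0$ of the target with the same object; on degree $p$ the two identifications agree with $T_0(\alpha^p)$ by construction.

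The substantive step is (6). Setting $K:=\ker \alpha^p$, the Koszul long exact sequence applied to the defining conflation of $K$, combined with (1) and (5), yields $T_0(K)\cong T_1(F_p x/F_{p-1}x)$, so by (2) it suffices to kill $T_1(F_p x/F_{p-1}x)$. I would proceed by downward induction on $p$, starting from $p>\deg x$ where $T_0(x)_p=0$ and $F_p x=F_{p-1}x=x$, making $\alpha^p$ tautologically an isomorphism. For the inductive step, the long exact sequence of $F_{p-1}x\rinf F_p x\rdef F_p x/F_{p-1}x$ combined with the injectivity $T_0(F_{p-1}x)\hookrightarrow T_0(F_p x)$ (a consequence of (3)) produces a surjection $T_1(F_p x)\twoheadrightarrow T_1(F_p x/F_{p-1}x)$; the long exact sequence of $F_p x\rinf x\rdef x/F_p x$ combined with $T_1(x)=0$ produces a surjection $T_2(x/F_p x)\twoheadrightarrow T_1(F_p x)$. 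The inductive hypothesis renders each graded piece $F_q x/F_{q-1}x\cong \cF[n](T_0(x)_q)(-q)$ for $q>p$ torsion-free by (1), and iterated use of the Koszul long exact sequence through the filtration $F_p x\subset F_{p+1}x\subset\cdots\subset F_{\deg x}x=x$ then forces $x/F_p x$ to be torsion-free, killing $T_2(x/F_p x)$ and closing the chain. The crux is arranging the induction so that torsion-freeness of $x/F_p x$ is in hand precisely when it is needed, which is why the induction must run downward rather than upward on $p$.
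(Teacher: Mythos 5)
Your argument is correct and follows essentially the same strategy as the paper's: parts (1)--(5) by the same direct computations (reduction to the polynomial ring for (1), the canonical filtration and adjunction for (3)--(5)), and part (6) by downward induction on $p$, extracting the vanishing of the kernel $K^p=\ker\alpha^p$ from $T_0(K^p)\cong T_1(F_px/F_{p-1}x)$ together with (2). The one cosmetic difference is the inductive invariant in (6): the paper carries ``$T_1(F_px)=0$'' directly downward, deducing $T_1(F_{p-1}x)=0$ from the vanishing of $T_2(F_px/F_{p-1}x)$ in the long exact sequence of $F_{p-1}x\rinf F_px\rdef F_px/F_{p-1}x$, whereas you carry ``$\alpha^q$ is an isomorphism for $q>p$'' and recover $T_1(F_px)=0$ by first establishing that $x/F_px$ is torsion free (its graded pieces being free by the inductive hypothesis, and torsion-free objects being closed under extensions). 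Both routes hinge on the same two conflations and the identification of $T_0(K^p)$, so the substance is identical; your detour through $x/F_px$ is one step longer but makes the role of closure under extensions explicit.
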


\begin{proof}
$\mathrm{(1)}$ Since the degree shift functor is exact, 
we have the equality $T_i(x(-k))=T_i(x)(-k)$ 
for any natural numbers 
$i$ and $k$. 
Therefore we shall just check that 
$\cF[n](y)$ is torsion free. 
If $\cA$ is the category of finitely generated free $\bbZ$-modules 
$\cP_{\bbZ}$ and $y=\bbZ$, 
then $\cF[n](y)$ is just the $n$-th polynomial 
ring over $\bbZ$, $\bbZ[t_1,\cdots,t_n]$ 
and $T_i(\cF[n](y))$ is the $i$-th homology group of 
the Koszul complex associated with the regular sequence 
$t_1,\cdots,t_n$. 
In this case, 
it is well-known that 
$T_i(\cF[n](y))=0$ for $i>0$. 
For general $\cA$ and $y$, 
there exists an exact functor 
$\cP_{\bbZ} \to \cA$ which sends $\bbZ$ to $y$ and 
which induces $\Ch({(\cP_{\bbZ})}'_{\gr}[n]) \to \Ch(\cA'_{\gr}[n])$ 
and $\Kos(\cF[n](\bbZ))$ goes to $\Kos(\cF[n](y))$ by 
this exact functor. 
Hence we obtain the equality $T_i(\cF[n](y))=0$ for any positive integer $i$. 

\sn
$\mathrm{(2)}$ 
First notice that we have the equalities 
$${T_0(x)}_k=
\begin{cases}
x_0 & \text{if $k=0$}\\
x_k/\im(\psi^1,\cdots,\psi^n) & \text{if $k>0$}
\end{cases}
.
$$
Therefore if 
${T_0(x)}_k=0$ for $k\leq s$, then we have 
$x_0=0$ and $x_k=\im(\psi^1,\cdots,\psi^n)$ for $k\leq s$. 
Hence inductively we notice that $x_k=0$ for $k\leq s$.

\sn
Assertion 
$\mathrm{(3)}$ follows from direct calculation.

\sn
$\mathrm{(4)}$ 
We have the equality 
$$
{(F_px/F_{p-1}x)}_k \isoto
\begin{cases}
0 & \text{if $k < p $}\\
x_p/\im(\psi^1,\cdots,\psi^n)={T_0(x)}_p & \text{if $k=p$}
\end{cases}
.
$$
Therefore by 
Remark~\ref{rem:adjointnessofF[n]}, 
we  have the canonical morphism
$$\alpha^p\colon 
\cF[n]({T_0(x)}_p)(-p) \to ((F_px/F_{p-1}x)(p))(-p)=F_px/F_{p-1}x.$$
One can check that the morphism is an epimorphism. 

\sn
$\mathrm{(5)}$ 
By $\mathrm{(1)}$, 
we have the equalities 
$$
{F_px/F_{p-1}x}\isoto{T_0(\cF[n]({T_0(x)}_p)(-p))}_k \isoto
\begin{cases}
0 & \text{if $k\neq p $}\\
x_p/\im(\psi^1,\cdots,\psi^n) & \text{if $k=p$}
\end{cases}
$$
and ${T_0(\alpha^p)}_p=\id$. 
Hence we get the assertion. 

\sn
$\mathrm{(6)}$ 
Let $K^p$ be the kernel of $\alpha^p$, 
we have short exact sequences 
$$K^p \rinf \cF[n]({T_0(x)}_p)(-p) \rdef F_px/F_{p-1}x,$$
$$F_{p-1}x \rinf F_p x \rdef F_px/F_{p-1}x.$$
We call the long exact sequences of Koszul homologies 
associated with short sequences above $\mathrm{(I)}$, 
$\mathrm{(II)}$ respectively. 
By $\mathrm{(I)}$ 
and assertions 
$\mathrm{(1)}$ and $\mathrm{(5)}$, 
we have the isomorphism 
$$T_1(F_px/F_{p-1}x)\isoto T_0(K^p).$$ 
We claim that the following assertion.

\begin{claim}
$T_1(F_px/F_{p-1}x)=0$ and $T_1(F_px)=0$.
\end{claim}

\sn
We prove the claim by descending induction of $p$. 
For sufficiently large $p$, 
we have $T_1(F_px)=T_1(x)$ 
and therefore it is trivial by the assumption. 
Then by $\mathrm{(II)}$ and $\mathrm{(3)}$, 
we have 
$$T_0(K^p)=T_1(F_px/F_{p-1}x)=0.$$ 
Therefore by $\mathrm{(2)}$, 
we have $K^p=0$. 
By $\mathrm{(I)}$ and $\mathrm{(1)}$, 
we have isomorphisms 
$$0=T_2(\cF[n]({T_0(x)}_p)(-p))
\isoto T_2(F_px/F_{p-1}x).$$ 
By $\mathrm{(II)}$, 
we get $T_1(F_{p-1}x)=0$. 
Hence we prove the claim and by $\mathrm{(2)}$, 
we get the desired result. 
\end{proof}

For an object $x$ in an additive category $\cB$, 
recall the definition 
of the polynomial object $x[t]$ in $\End\cB$ from \ref{para:polynomial category}. 
We regard $\ExCat$ 
the category of essentially small exact categories as 
the full subcategory or $\RelEx_{\operatorname{vs,\ consisit}}$. 
(See Example~\ref{ex:semi devices} $\mathrm{(1)}$).

\begin{thm}
\label{cor:canisomoftf}
$\mathrm{(1)}$ 
The inclusion functor $\cA'_{\gr,\tf}[n] \rinc \cA'_{\gr}[n]$ 
is a derived equivalence.\\
$\mathrm{(2)}$ 
Let $\fA:\ExCat \to \cC$ be a categorical homotopy invariant 
additive theory. 
Then for any natural number $m$, 
the exact functor $a:\cA'_{\gr,\tf,m}[n] \to \cA^{\times m+1}$ 
which is defined by sending an object $x$ in $\cA'_{\gr,tf,m}[n]$ 
to ${(T_0(x)_k)}_{0\leq k\leq m}$ in $\cA^{\times m+1}$ induces an isomorphism 
$$\fA(\cA'_{\gr,\tf,m}[n])\isoto\bigoplus_{k=0}^m\fA(\cA)$$
in $\cC$.\\
$\mathrm{(3)}$ 
Let $\cT$ be a triangulated category closed under countable coproducts 
and $\frakL\colon\RelEx_{\text{consist}} \to \cT$ a fine localizing theory. 
Then we have the canonical isomorphism 
between the polynomial object $\frakL(\cA)$ and $(\frakL(\cA'_{\gr}[n]),\frakL((-1)))$:
$$\lambda_{\cA,n}\colon 
\frakL(\cA)[t]\isoto (\frakL(\cA'_{\gr}[n]),\frakL((-1)))$$
in $\End\cT$ which makes the diagram in $\cT$ below commutative for any 
natural number $k$:
$$\footnotesize{\displaystyle{
\xymatrix{
\frakL(\cA) \ar[d]_{t^k} \ar[rd]^{\frakL(\cF[n](-k))}\\
\overset{\infty}{\underset{m=0}{\bigoplus}}\frakL(\cA)t^m 
\ar[r]^{\sim}_{\frakL(\lambda_{\cA,n})} & \frakL(\cA'_{\gr}[n]).
}}}$$
\end{thm}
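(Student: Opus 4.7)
The plan is to prove the three parts sequentially: (1) by the resolution lemma, (2) by additivity for an appropriate characteristic filtration of the identity functor, and (3) by combining the first two with the filtered colimit $\cA'_{\gr,\tf}[n] = \colim_m \cA'_{\gr,\tf,m}[n]$. For (1), I apply Lemma~\ref{lem:resol thm} to $\cA'_{\gr}[n]$ with the homology theory given by the positive Koszul homologies $\{T_i\}_{i \geq 1}$; the $T$-acyclic subcategory is exactly $\cA'_{\gr,\tf}[n]$. The resolving deflation is supplied by Lemma~\ref{lem:fundamental facts about}~(2), giving $\bigoplus_{k=0}^m \cF[n](x_k)(-k) \rdef x$, whose source is torsion free by Proposition~\ref{prop:fun pro Kos hom}~(1) (Koszul homology commutes with finite direct sums). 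The finite-vanishing of $T_i$ is automatic because the Koszul complex has length $n$.

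For (2), the strategy is to realize the identity functor of $\cA'_{\gr,\tf,m}[n]$ as a characteristic filtration $0 = F_{-1} \subset F_0 \subset \cdots \subset F_m = \id$ via the canonical filtration of Definition~\ref{df:canfilt}. The key technical step---and the main obstacle in the whole proof---is to verify that each $F_p$ restricts to an \emph{exact} endofunctor of $\cA'_{\gr,\tf,m}[n]$. I prove this by induction on $p$: (i) $F_p x$ is torsion free for torsion-free $x$, via the long exact sequence of Koszul homologies applied to $F_{p-1}x \rinf F_p x \rdef F_p x/F_{p-1}x$ together with Proposition~\ref{prop:fun pro Kos hom}~(6), which identifies the quotient with the torsion-free object $\cF[n](T_0(x)_p)(-p)$, so that an extension of torsion-free objects is torsion free; (ii) $F_p$ sends conflations to conflations, where the subtle exactness at the middle reduces, via the inductive hypothesis and the easily established exactness of $F_p/F_{p-1}$, to the equality $F_{p-1} y \cap x = F_{p-1} x$ for a subobject $x \rinf y$, which itself follows from the inductive exactness of $F_{p-1}$. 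Since $F_p/F_{p-1} \cong (-p) \circ \cF[n] \circ T_{0,p}$ with $T_{0,p} := T_0(-)_p$ exact on torsion-free objects (vanishing of $T_1$ on the quotient gives $T_0$-left-exactness as well as the usual right-exactness), Lemma~\ref{lem:characteristic filtration} yields $\fA(\id) = \sum_{p=0}^m \fA(F_p/F_{p-1})$. Defining $G\colon \cA^{\times m+1} \to \cA'_{\gr,\tf,m}[n]$ by $(y_k)_k \mapsto \bigoplus_k \cF[n](y_k)(-k)$, a direct computation of Koszul homology of a free graded object gives $a \circ G = \id$, while additivity of $\fA$ on direct sums combined with the displayed equality gives $\fA(G) \circ \fA(a) = \fA(\id)$.

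For (3), I assemble the pieces. By (1) and Remark~\ref{rem:localizing theory}~(2)(i), the inclusion induces $\frakL(\cA'_{\gr,\tf}[n]) \isoto \frakL(\cA'_{\gr}[n])$. Every noetherian object has finite degree (Remark~\ref{rem:dim of x}), so $\cA'_{\gr,\tf}[n] = \colim_m \cA'_{\gr,\tf,m}[n]$ as a filtered colimit of exact categories, and fineness of $\frakL$ gives $\frakL(\cA'_{\gr,\tf}[n]) = \colim_m \frakL(\cA'_{\gr,\tf,m}[n])$. Applying (2) to $\fA := \frakL|_{\ExCat}$ (an additive theory by Remark~\ref{rem:localizing theory}~(2)(ii)), we obtain $\frakL(\cA'_{\gr,\tf,m}[n]) \cong \bigoplus_{k=0}^m \frakL(\cA)$, and the transition maps, induced by $\cA'_{\gr,\tf,m}[n] \rinc \cA'_{\gr,\tf,m+1}[n]$, are the evident inclusions of coproducts. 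Passing to the colimit and using countable coproducts in $\cT$ yields $\frakL(\cA'_{\gr}[n]) \cong \bigoplus_{k \geq 0} \frakL(\cA)$, which is the underlying object of $\frakL(\cA)[t]$. I define $\lambda_{\cA,n}$ via the universal property of this coproduct by the morphisms $\frakL(\cF[n](-k))$ for $k \in \bbN$; commutativity of the diagram in the statement is then tautological, and compatibility with the endomorphisms $\frakL((-1))$ on the target and multiplication by $t$ on the source reduces to the identity $(-1) \circ \cF[n](-k) = \cF[n](-k-1)$ of exact functors $\cA \to \cA'_{\gr}[n]$.
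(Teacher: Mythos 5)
Your proof is correct and follows essentially the same route as the paper's: part~(1) via Lemma~\ref{lem:resol thm} applied to the Koszul homology theory, part~(2) via the inverse functor $b$ (your $G$), additivity for the canonical characteristic filtration, and Proposition~\ref{prop:fun pro Kos hom}~(6), and part~(3) by passing to the filtered colimit $\colim_m \cA'_{\gr,\tf,m}[n]$ using fineness of $\frakL$. The one place where you add genuine content is the careful verification that each $F_p$ restricts to an \emph{exact} endofunctor of $\cA'_{\gr,\tf,m}[n]$ (so that $F_\bullet$ really is an exact characteristic filtration in the sense of~\ref{para:characteristic filtration}); the paper treats this as immediate, and your inductive argument using the long exact sequence of Koszul homologies correctly fills in that step.
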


\begin{proof}
$\mathrm{(1)}$ 
We apply Lemma~\ref{lem:resol thm} to $\cA'_{\gr}[n]$ and Koszul homologies. 
The assumption of Lemma~\ref{lem:resol thm} follows from 
Lemma~\ref{lem:fundamental facts about} $\mathrm{(2)}$ and 
Proposition~\ref{prop:fun pro Kos hom} $\mathrm{(1)}$.

\sn
$\mathrm{(2)}$ 
We define the exact functor $b\colon\cA^{\times m+1} \to \cA'_{\gr,\tf,m}[n]$ 
by sending an object ${(x_k)}_{0\leq k\leq m}$ in $\cA^{\times m+1}$ 
to $\displaystyle{\bigoplus_{k=0}^{m} \cF[n](x_k)(-k)}$ 
in $\cA'_{\gr,\tf,m}[n]$.
By virtue of Proposition~\ref{prop:fun pro Kos hom} $\mathrm{(1)}$, 
the functor $ab$ is canonically isomorphic to 
the identity functor on $\cA^{\times m+1}$. 
On the other hand, 
the identity functor on $\cA'_{\gr,\tf,m}[n]$ has 
an exact characteristic filtration $F_{\bullet}$ 
with $F_px/F_{p-1}x\isoto \cF[n]({T_0(x)}_p)(-p)$ 
for any object $x$ in $\cA'_{\gr,\tf,m}[n]$ 
by Proposition~\ref{prop:fun pro Kos hom} $\mathrm{(6)}$, 
so applying Lemma~\ref{lem:characteristic filtration}, 
we have the equalities 
$$\id_{\cA'_{\gr,\tf,m}[n]}=\sum_{p=1}^{m}\fA(F_p/F_{p-1})=
\sum_{p=0}^m\fA(\cF[n]({T_0(-)}_p)(-p))=\fA(ba).$$
Therefore we have 
an isomorphism 
$$\fA(\cA'_{\gr,\tf,m}[n])\isoto 
\displaystyle{\bigoplus_{i=0}^m}\fA(\cA).$$
$\mathrm{(3)}$ 
By Remark~\ref{rem:localizing theory} $\mathrm{(2)}$ $\mathrm{(i)}$, 
for any integer $m$, we have an isomorphism 
$$\frakL(\cA'_{\gr,\tf,m}[n])\isoto 
\displaystyle{\bigoplus_{i=0}^m}\frakL(\cA).$$
Finally by taking the filtered inductive limit 
and utilizing assertion $\mathrm{(1)}$ and 
Remark~\ref{rem:localizing theory} $\mathrm{(2)}$ $\mathrm{(i)}$, 
we get the desired isomorphism.
\begin{eqnarray*}
\bigoplus_{i=0}^{\infty}\frakL(\cA) & = & 
\underset{m\to \infty}{\colim} \bigoplus_{i=0}^{m}\frakL(\cA) \isoto 
\underset{m\to \infty}{\colim} \frakL(\cA'_{\gr,\tf,m}[n])\\ 
&\isoto& 
\frakL(\underset{m\to \infty}{\colim}\cA'_{\gr,\tf,m}[n]) \isoto 
\frakL(\cA'_{\gr,\tf}[n]) \isoto
\frakL(\cA'_{\gr}[n]).
\end{eqnarray*}
\end{proof}

\section{The main theorem}
\label{sec:Main thm}

In this section, 
let us fix an essentially small noetherian abelian category $\cA$. 
We consider the functor 
$-\otimes_{\cA}\bbZ[t]$ from $\cA$ to $\cA[t]$ 
defined by sending an object $a$ in $\cA$ to an object $a[t]$ in $\cA[t]$. 
Since $\Lex\cA$ is Gorthendieck, we can easily check that 
the functor $-\otimes_{\cA}\bbZ[t]$ is exact. 
Recall that we regard $\ExCat$ 
the category of essentially small exact categories as 
the full subcategory or $\RelEx_{\operatorname{vs,\ consisit}}$. 
(See Example~\ref{ex:semi devices} $\mathrm{(1)}$). 
The purpose of this section is to study the induced map 
from $-\otimes_{\cA}\bbZ[t]$ on $K$-theory. 
More generally, we will prove the following theorem:

\begin{thm}
\label{thm:abstract main thm}
Let $(\cT,\Sigma)$ be 
a triangulated category closed under countable coproducts, 
$\calR$ a full subcategory of $\RelEx_{\operatorname{consist}}$ which contains 
$\AbCat$ the category of essentially small abelian categories and 
$\frakL\colon\calR \to \cT$ a 
nilpotent invariant fine localizing theory. 
Then the base change functor 
$-\otimes_{\cA}\bbZ[t]\colon\cA\to \cA[t]$ 
induces an isomorphism 
$$\frakL(\cA)\isoto\frakL(\cA[t]).$$
\end{thm}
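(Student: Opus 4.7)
The plan is to mimic the categorical analogue of the scheme-theoretic decomposition $\bbA^1 \simeq \bbP^1 \setminus \bbP^0$ outlined in the introduction, with $\cA[t]$ playing the role of $\bbA^1_{\cA}$, the noetherian graded category $\cA'_{\gr}[2]$ playing the role of graded coherent sheaves on $\bbP^1_{\cA}$, and the Serre subcategory $\cS \subset \cA'_{\gr}[2]$ of objects on which the second shift morphism $\psi^2$ acts locally nilpotently playing the role of sheaves supported on the hyperplane at infinity. The first step produces, as the content of Theorem~\ref{prop:cannonical isom}, a derived exact sequence
\[
\cS \rinf \cA'_{\gr}[2] \rdef \cA[t]
\]
in which the quotient realizes $\cA[t]$ by ``inverting $\psi^2$'' and passing to the degree-zero part; under this quotient the free object $\cF[2](a)$ on $a \in \cA$ maps to $a[t]$, so that the composition $\cA \to \cA'_{\gr}[2] \rdef \cA[t]$ recovers the base change functor $-\otimes_{\cA}\bbZ[t]$.

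The second step invokes the purity/d\'evissage statement Proposition~\ref{prop;devissage}. The full subcategory $\cA'_{\gr}[1] \subseteq \cS$ of objects annihilated by $\psi^2$ is closed under admissible subobjects, quotients, and finite direct sums, and by Proposition~\ref{prop:Serre radical} every object of $\cS$ admits a finite $\psi^2$-adic filtration whose successive subquotients are annihilated by $\psi^2$, so the Serre radical of $\cA'_{\gr}[1]$ is all of $\cS$. The nilpotent invariance hypothesis on $\frakL$ then yields $\frakL(\cA'_{\gr}[1]) \isoto \frakL(\cS)$.

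Applying $\frakL$ to the derived exact sequence and inserting the identifications $\frakL(\cA'_{\gr}[1]) \cong \frakL(\cA)[t] \cong \frakL(\cA'_{\gr}[2])$ furnished by Theorem~\ref{cor:canisomoftf}(3), we obtain a distinguished triangle
\[
\frakL(\cA)[t] \xrightarrow{\varphi} \frakL(\cA)[t] \longrightarrow \frakL(\cA[t]) \longrightarrow \Sigma\frakL(\cA)[t]
\]
in $\cT$. The connecting morphism $\varphi$ is induced by the inclusion $\cA'_{\gr}[1] \rinc \cA'_{\gr}[2]$, and on the distinguished generator $\cF[1](a)(-k)$ is governed by the conflation
\[
\cF[2](a)(-k-1) \xrightarrow{\psi^2} \cF[2](a)(-k) \rdef \cF[1](a)(-k)
\]
in $\cA'_{\gr}[2]$; this forces $\varphi$ to be multiplication by $\id - t$ under the polynomial object identification $\lambda_{\cA,n}$. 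The formulas of Lemma~\ref{lem:split exact seq}, transported to $\cT$, exhibit $\frakL(\cA)[t] \xrightarrow{\id - t} \frakL(\cA)[t] \xrightarrow{\nabla} \frakL(\cA)$ as a split short exact sequence in the additive category $\cT$, so the cofiber of $\id - t$ is $\frakL(\cA)$. Finally, the composite $\frakL(\cA) \hookrightarrow \frakL(\cA)[t] \to \frakL(\cA[t])$ (constant-term inclusion followed by the triangle quotient) is the identity on $\frakL(\cA)$ via $\nabla$, and by the last observation of the first paragraph it coincides with $\frakL(-\otimes_{\cA}\bbZ[t])$, so the base change induces the desired isomorphism.

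The principal obstacle is Theorem~\ref{prop:cannonical isom} itself: constructing the quotient functor $\cA'_{\gr}[2] \rdef \cA[t]$ in a manner that is derived exact in the sense of Definition~\ref{df:exact sequence} requires a careful analysis of noetherian subobjects via the abstract Artin-Rees lemma (Corollary~\ref{cor:abst AR}) to control $\psi^2$-adic filtrations. A secondary but nontrivial point is tracking the Koszul-type conflation above through the polynomial object isomorphism of Theorem~\ref{cor:canisomoftf}(3) in order to confirm the identification $\varphi = \id - t$.
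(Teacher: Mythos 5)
Your proposal is correct and follows essentially the same route as the paper: the derived exact sequence $\cA'_{\gr,\nil}[2] \to \cA'_{\gr}[2] \to \cA[t]$ from Theorem~\ref{prop:cannonical isom} and Corollary~\ref{cor:canisom}, the d\'evissage identification $\frakL(\cA'_{\gr}[1]) \isoto \frakL(\cA'_{\gr,\nil}[2])$, the polynomial-object isomorphisms of Theorem~\ref{cor:canisomoftf}(3), the identification of the connecting map with $\id - t$ via the Koszul-type conflation (this is Lemma~\ref{lem:commutative diagram}), and the splitting of $\id - t$ from Lemma~\ref{lem:split exact seq}. The only discrepancies are cosmetic: you swap the roles of $\psi^1$ and $\psi^2$ relative to the paper's convention, and you conclude by computing the cofiber of $\id - t$ directly rather than invoking the five lemma as the paper does.
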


By taking $\cT$, $\calR$ and $\frakL$ to the 
stable category of spectra, $\ExCat$ the category of essentially small exact categories 
and the non-connective $K$-theory, 
we get Theorem~\ref{thm:mainthm} from Theorem~\ref{thm:abstract main thm}. 
From now on, let $(\cT,\Sigma)$ be a triangulated category closed under 
countable coproducts and $(\frakL,\partial)$ a fine locaizing theory 
$\frakL\colon\ExCat \to \cT$. 

\subsection{Nilpotent objects in $\cA'_{\gr}[2]$}
\label{subsec:Nilp}

In this subsection, 
we will define the category $\cA'_{\gr,\nil}[2]$ 
of nilpotent objects 
in $\cA'_{\gr}[2]$. 
We also study the relationship $\cA'_{\gr}[2]$ with 
$\cA[t]$ and calculate the $K$-theory of $\cA'_{\gr,\nil}[2]$. 
Recall from the introduction that 
Theorem~\ref{prop:cannonical isom} and 
Proposition~\ref{prop;devissage} 
correspond to geometric motivational formulas, namely 
the localization and 
the purity formulas 
in the introduction respectively. 
For simplicity in this subsection, 
we write $\psi$ and $\phi$ for $\psi^1$ and $\psi^2$ respectively 
and for any object $x$ in $\cA$ and 
we write $x[\psi,\phi]$ for $\cF[2](x)$. 

\begin{df}
\label{df:nilpotent obj}
Let $\cE$ be an exact category. 
An object $x$ in $\cE_{\gr}[2]$ is ({\bf $\psi$-}) {\it nilpotent} 
if there exists an integer $n$ such that 
$$\psi^{n}_k=0$$ 
for any non-negative integer $k$.
We write $\cE_{\gr,\nil}[2]$ 
(resp. $\cE'_{\gr,\nil}[2]$, $\cE_{\gr,\nil}[2]_{\fingen}$) 
for the full subcategory of 
$\cE_{\gr}[2]$ 
(resp. $\cE'_{\gr}[2]$, $\cE_{\gr}[2]_{\fingen}$) 
consisting of all 
nilpotent objects. 
\end{df}

\begin{lem}
\label{lem:niliSerresubcat}
The category $\cA'_{\gr,\nil}[2]$ 
is a Serre subcategory of $\cA'_{\gr}[2]$. 
In particular $\cA'_{\gr,\nil}[2]$ 
is an abelian category.
\end{lem}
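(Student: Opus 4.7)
The plan is to verify the three Serre closure axioms for $\cA'_{\gr,\nil}[2]$ inside the ambient abelian category $\cA'_{\gr}[2]$, whose abelianness is recorded in paragraph~\ref{para:cC'df}; once this is established, a Serre subcategory of an abelian category is automatically abelian, so the second sentence of the lemma is immediate. Throughout, the key technical fact to exploit is that admissible monomorphisms (resp.\ epimorphisms) in $\cA'_{\gr}[2]$ are termwise in $\cA$, by Remark~\ref{rem:limit in graded cat}. The notation $\psi^n_k$ from Definition~\ref{df:nilpotent obj} denotes the $n$-fold iterate of $\psi$ starting in degree $k$; when several objects are in play I will write $\psi^{n,y}_k$, $\psi^{n,x}_k$, etc.

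For closure under subobjects, suppose $x$ is nilpotent of uniform degree $n$ and let $\iota\colon y\rinf x$ be an admissible mono. Naturality of $\iota$ with respect to $\psi^n$ yields $\iota_{k+n}\circ\psi^{n,y}_k=\psi^{n,x}_k\circ\iota_k=0$, and since $\iota_{k+n}$ is mono in $\cA$, one concludes $\psi^{n,y}_k=0$ uniformly in $k$. Closure under quotients is dual: for an epi $\pi\colon x\rdef z$, the relation $\psi^{n,z}_k\circ\pi_k=\pi_{k+n}\circ\psi^{n,x}_k=0$ combined with surjectivity of $\pi_k$ forces $\psi^{n,z}_k=0$.

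The main step is closure under extensions. Given a conflation $x\rinf y\rdef z$ in $\cA'_{\gr}[2]$ with $x$ nilpotent of degree $n$ and $z$ nilpotent of degree $m$, the goal is to show that $y$ is nilpotent of degree $n+m$ with uniform bound in $k$. The composite $y_k\to y_{k+m}\rdef z_{k+m}$ built from $\psi^{m,y}_k$ and $\pi_{k+m}$ equals $\psi^{m,z}_k\circ\pi_k=0$, so by the universal property of the kernel $\iota_{k+m}\colon x_{k+m}\rinf y_{k+m}$ the map $\psi^{m,y}_k$ factors uniquely as $\iota_{k+m}\circ\alpha_k$ for some $\alpha_k\colon y_k\to x_{k+m}$. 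Splitting $\psi^{n+m,y}_k=\psi^{n,y}_{k+m}\circ\psi^{m,y}_k$ and substituting the factorization gives
$$\psi^{n+m,y}_k=(\psi^{n,y}_{k+m}\circ\iota_{k+m})\circ\alpha_k=(\iota_{k+m+n}\circ\psi^{n,x}_{k+m})\circ\alpha_k=0,$$
where the middle equality is naturality along $\iota$ and the last uses $\psi^{n,x}=0$. Finally, Lemma~\ref{lem:noetherian} ensures that subobjects, quotients, and extensions of noetherian objects remain noetherian, so every object produced lies in $\cA'_{\gr}[2]$.

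The only mild obstacle is the uniformity in $k$ in the extension step: it is essential that a single bound $n+m$ work for every $k$ simultaneously, which is precisely why factoring $\psi^{m,y}_k$ through the kernel $x_{k+m}$ at the intermediate stage is the natural device. Beyond this small piece of bookkeeping, no deep input is required.
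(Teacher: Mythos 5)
Your proof is correct and takes essentially the same route as the paper, which merely asserts that closure under subobjects, quotients, and finite direct sums is easy and that the bound $i+j$ works for extensions; you supply the straightforward termwise naturality and kernel-factorization arguments the paper omits. The only detail the paper mentions that you subsume rather than state explicitly is closure under finite direct sums, but that is an immediate consequence of closure under extensions.
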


\begin{proof}
The assertion that $\cA'_{\gr,\nil}[2]$ 
is closed under 
sub- and quotient objects and finite direct sum is easily proved. 
We can also easily prove the following assertion. 
For a short exact sequence 
$x \rinf y \rdef z$ in $\cA'_{\gr}$, 
let $i$ and $j$ be integers such that 
$\psi_x^i=0$ and $\psi^j_z=0$. 
Then we can easily prove that $\psi^{i+j}_y=0$. 
Therefore $\cA'_{\gr,\nil}[2]$ 
is closed under extensions in $\cA'_{\gr}[2]$. 
\end{proof}

\begin{df}
\label{nt:thetabar}
Let $\cE$ be an essentially small exact category. 
We define the functor 
$$\bar{\Theta}_{\cE}(=\bar{\Theta})\colon \cE_{\gr}[2] \to \End\Lex \cE$$ 
which sends an object $x$ in $\cE_{\gr}[2]$ to an object 
$(\underset{\psi}{\colim}x_n,\colim\phi_n )$ 
in $\End\Lex \cE$ where 
$\underset{\psi}{\colim} x_n$ is an inductive limit of an ind system 
$(x_0 \onto{\psi_0} x_1 \onto{\psi_1} x_2 \onto{\psi_2}\cdots)$, 
namely $\displaystyle{\coker\left (\bigoplus_{n=0}^{\infty}x_n 
\onto{\id-\oplus\psi_n} 
\bigoplus_{n=0}^{\infty}x_n\right )}$ 
and $\colim \phi_n$ is an inductive limit of $\{\phi_n\}_n$, 
namely, 
a morphism which is induced from 
$\displaystyle{\bigoplus_{n=0}^{\infty}\phi_n}$. 
\end{df}

\begin{lem}
\label{lem:thetabar}
Let $\cE$ be an essentially small exact category. 
Then\\
$\mathrm{(1)}$ 
The functor 
$\bar{\Theta}_{\cE}\colon\cE_{\gr}[2] \to \End \Lex\cE$ is an exact functor. 
Moreover if $u:x\to y$ is an epimorphism in $\cE_{\gr}[2]$, 
then $\Theta(u)\colon\Theta(x) \to \Theta(y)$ 
is also an epimorphism in $\End\Lex\cE$.\\
$\mathrm{(2)}$ 
For any object $x$ in $\cE_{\gr,\nil}[2]$, 
$\bar{\Theta}(x)$ is a zero object.\\
$\mathrm{(3)}$ 
For any object $x$ in $\cE$ and any positive integer $k$, 
$\psi^k\colon x(-k) \to x$ induces an isomorphism 
$\bar{\Theta}(\psi^k)\colon\Theta(x(-k))\to \Theta(x)$ in $\End\Lex\cE$.\\
$\mathrm{(4)}$ 
For any object $x$ in $\cE_{\gr}[2]$ and any positive integer $k$, 
$\bar{\Theta}(x[\psi,\phi](-k))$ is canonically 
isomorphic to $x[t]$.\\
$\mathrm{(5)}$ 
For any object $x$ in $\cE_{\gr}[2]_{\fingen}$, 
$\bar{\Theta}(x)$ is in ${(\End\Lex\cE)}_{\fingen}$. 
We denote the induced functor 
$\cE_{\gr}[2]_{\fingen} \to {(\End\Lex\cE)}_{\fingen}$ 
by $\bar{\Theta}_{\cE,\fingen}$.\\
In particular $\bar{\Theta}_{\cA,\fingen}$ induces the exact functor 
$\Theta_{\cA}(=\Theta)\colon \cA'_{\gr}[2]/\cA'_{\gr,\nil}[2] \to \cA[t]$. 
\end{lem}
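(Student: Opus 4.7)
The overall plan is to reduce each of (1)--(5) to an explicit sequential colimit computation in the Grothendieck abelian category $\Lex\cE$, using the componentwise Yoneda embedding $\cE \hookrightarrow \Lex\cE$. The key facts I will exploit are that filtered colimits in $\Lex\cE$ are exact and preserve epimorphisms, that Yoneda is exact and reflects exactness, and that the two natural transformations $\psi$ and $\phi$ built into an object of $\cE_{\gr}[2]$ commute.

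For (1) and (2): Exactness of $\bar\Theta$ follows because a conflation $x\rinf y\rdef z$ in $\cE_{\gr}[2]$ gives a conflation in each degree in $\cE$; after Yoneda these become short exact sequences in each degree in $\Lex\cE$, and the cokernel formula defining $\bar\Theta$ is a sequential $\psi$-colimit, which is exact in the Grothendieck category $\Lex\cE$. The same cokernel presentation shows that epimorphisms pass to epimorphisms on the $\psi$-colimit. For (2), if $\psi^n=0$ componentwise on $x$, then every composition of $n$ consecutive transition maps in the ind-system $x_0 \to x_1 \to \cdots$ vanishes, so the colimit is zero.

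For (3) and (4): The degree shift $x(-k)$ prepends $k$ zeros to the $\psi$-ind system of $x$, which does not change the sequential colimit; the morphism $\psi^k\colon x(-k) \to x$ implements precisely this shift level-by-level, hence $\bar\Theta(\psi^k)$ is an isomorphism. For (4), I compute $\bar\Theta(\cF[2](x))$ directly: level $m$ of $\cF[2](x)$ is $\bigoplus_{i_1+i_2=m} x_{(i_1,i_2)}$, and the $\psi$-transition identifies $x_{(i_1,i_2)}$ at level $m$ with $x_{(i_1+1,i_2)}$ at level $m+1$. Each colimit class therefore has a unique representative $[x_{(0,j)}]$, producing a canonical isomorphism of the underlying object of the colimit with $\bigoplus_{j\geq 0} x = x[t]$ in $\Lex\cE$. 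The endomorphism $\phi$, which sends $x_{(0,j)}$ to $x_{(0,j+1)}$, is identified with multiplication by $t$. Combined with the invariance under the degree shift, this yields (4).

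For (5) and the factorization: given $x \in \cE_{\gr}[2]_{\fingen}$, pick an epi $\bigoplus_{k=0}^N \cF[2](x_k)(-k) \twoheadrightarrow x$. Applying $\bar\Theta$, which preserves epimorphisms by (1), and invoking (4), one obtains an epimorphism $(y[t], t) \twoheadrightarrow \bar\Theta(x)$ in $\End\Lex\cE$ with $y = \bigoplus_{k=0}^N x_k$; hence $\bar\Theta(x) \in (\End\Lex\cE)_{\fingen}$. In the noetherian abelian case this target equals $\cA[t]$ by Remark~\ref{rem:finitely generated}(3), yielding $\bar\Theta_{\cA,\fingen}\colon \cA'_{\gr}[2] \to \cA[t]$, which is exact by (1). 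By (2) it annihilates $\cA'_{\gr,\nil}[2]$; since the latter is a Serre subcategory by Lemma~\ref{lem:niliSerresubcat}, the universal property of the Serre quotient produces the desired exact $\Theta_\cA$. The most delicate step I expect is the bookkeeping in (4): identifying the $\psi$-colimit of $\cF[2](x)$ with $x[t]$ canonically and naturally in $x$, and verifying that the induced $\phi$-endomorphism really is multiplication by $t$. All other steps are formal consequences of Grothendieck exactness and the universal property of the Serre quotient.
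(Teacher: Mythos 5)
Your proposal is correct and follows essentially the same route as the paper: factor $\bar\Theta$ through the Yoneda embedding and the sequential $\psi$-colimit (exact in the Grothendieck category $\Lex\cE$), show nilpotent objects have vanishing colimit, reduce the degree-shift statement to this nilpotence observation, compute $\bar\Theta(\cF[2](x))$ explicitly as $x[t]$ with $\phi$ acting as $t$, and push an epimorphism from a finite sum of shifted frees through $\bar\Theta$ to land in $(\End\Lex\cE)_{\fingen}$, finishing with the universal property of the Serre quotient. The only cosmetic difference is that the paper proves (2) by exhibiting $\sum_{i<m}\psi^i$ as an explicit inverse to $\id-\oplus\psi_n$ rather than arguing directly that a sequentially nilpotent ind-system has zero colimit, but this is the same observation.
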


\begin{proof}
$\mathrm{(1)}$ 
The functor $\bar{\Theta}_{\cE}$ factors through 
the functor $y_{\gr}[2]\colon \cE_{\gr}[2] \to {(\Lex \cE)}_{\gr}[2]$ which is 
induced from the yoneda embedding $y:\cE \to \Lex\cE$ and 
the colimit functor $\colim_{\psi}\colon {(\Lex\cE)}_{\gr}[2] \to \End\Lex\cE$. 
Obviously $y_{\gr}[2]$ is exact and preserves epimorphisms. 
Since $\Lex\cE$ is a Grothendieck category, 
the functor $\colim_{\bbN}\colon \HOM(\bbN,\Lex\cE) \to \Lex\cE$ is exact. 
In particular, we acquire the assertion that 
the functor $\bar{\Theta}_{\cE}$ is 
an exact functor and preserves epimorphisms. 

\sn
$\mathrm{(2)}$ 
For any object $x$ in $\cE_{\gr,\nil}[2]$, 
assume that $\psi^{m,k}_x=0$ for any non-negative integer $k$. 
Then $\displaystyle{\sum_{i=0}^{m-1}\psi_x^i}$ 
is the inverse morphism of $\displaystyle{\id-\bigoplus_{n=0}^{\infty}\psi_n}$. 
Therefore $\displaystyle{\bar{\Theta}(x)=
\coker\left (\bigoplus_{n=0}^{\infty}x_n \onto{\id-\oplus \psi_n} 
\bigoplus_{n=0}^{\infty}x_n\right )}$ 
is trivial.

\sn
$\mathrm{(3)}$ 
Obviously $\ker(\psi^k:x(-k) \to x)$ and $\coker(\psi^k:x(-k)\to x)$ 
are $\psi$-nilpotent in $\Lex\cE_{\gr}[2]$. 
Therefore $\bar{\Theta}$ induces an isomorphism 
$\bar{\Theta}(\psi^k)$ by the observation in the proof of $\mathrm{(2)}$. 

\sn
$\mathrm{(4)}$ 
By assertion $\mathrm{(3)}$, we shall assume that $k=0$. 
In this case we have the canonical isomorphisms 
$$\bar{\Theta}(x[\psi,\phi])\isoto 
\coker\left (\bigoplus_{n=0}^{\infty} \bigoplus_{i+j=n}x\psi^i\phi^j 
\onto{\id-\oplus\psi_n} \bigoplus_{n=0}^{\infty} 
\bigoplus_{i+j=n}x\psi^i\phi^j\right )\isoto 
\bigoplus_{n=0}^{\infty}x\phi^n$$
where $x\psi^i\phi^j$ and $x\phi^n$ are copies of $x$. 

\sn
$\mathrm{(5)}$ 
For any object $x$ in $\cE_{\gr}[2]_{\fingen}$, 
there exists a non-negative integer $n$ such that 
the canonical morphism 
$\displaystyle{\bigoplus_{k=0}^n x_k[\psi,\phi](-k) \rdef x}$ 
is an epimorphism in $\cE_{\gr}[2]$. 
Then by $\mathrm{(1)}$ and $\mathrm{(4)}$, 
we have an epimorphism 
$\displaystyle{\bigoplus_{k=0}^n x_k[t] \rdef \bar{\Theta}(x)}$ 
in $\End\Lex\cE$. 
Therefore by Remark~\ref{rem:other df of S-poly cat}, 
$\bar{\Theta}(x)$ is in ${(\End\Lex\cE)}_{\fingen}$. 
\end{proof}

\begin{thm}
\label{prop:cannonical isom} 
The functor 
$\Theta\colon \cA'_{\gr}[2]/\cA'_{\gr,\nil}[2]\to \cA[t]$ 
is an equivalence of categories. 
\end{thm}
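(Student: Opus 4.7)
The plan is to apply the universal property of Serre quotients. Lemma~\ref{lem:thetabar} gives the exact functor $\bar\Theta_{\cA,\fingen}\colon\cA'_{\gr}[2]\to\cA[t]$, and by part~(2) of that lemma it annihilates $\cA'_{\gr,\nil}[2]$, so it descends to $\Theta$. It suffices to prove that $\Theta$ is essentially surjective and fully faithful, which I will do by combining an explicit \emph{homogenization} construction with a uniform nilpotence argument.

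For essential surjectivity, given $(y,\phi)\in\cA[t]$, I would apply Remark~\ref{rem:other df of S-poly cat} twice---first to obtain a deflation $\pi\colon a[t]\twoheadrightarrow (y,\phi)$ with $a\in\cA$, and then to $\ker\pi\in\cA[t]$---yielding a presentation $b[t]\xrightarrow{f}a[t]\twoheadrightarrow (y,\phi)$ with $b\in\cA$. By Example~\ref{ex:morphismofa[t]} one has $f=\sum_{i=0}^{m}c_it^i$ for morphisms $c_i\colon b\to a$ in $\cA$. I \emph{homogenize} $f$ to
\[
\tilde f\;:=\;\sum_{i=0}^{m}c_i\,\psi^{m-i}\phi^i\;\colon\; b[\psi,\phi](-m)\longrightarrow a[\psi,\phi]
\]
in $\cA'_{\gr}[2]$ via Example~\ref{ex:mor between freeobj}, and set $\tilde y:=\coker(\tilde f)$. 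Using Lemma~\ref{lem:thetabar}(1),(3),(4) to identify $\bar\Theta(a[\psi,\phi](-k))\cong a[t]$ (the iterated $\psi$ becoming identifications and $\phi$ becoming multiplication by $t$), the morphism $\bar\Theta(\tilde f)$ recovers $f$, so $\Theta(\tilde y)\cong (y,\phi)$.

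Next, I would identify $\ker\bar\Theta=\cA'_{\gr,\nil}[2]$. The inclusion $\supseteq$ is Lemma~\ref{lem:thetabar}(2). Conversely, assume $\bar\Theta(x)=0$, i.e.\ $\colim_{\psi}x_n=0$ in the Grothendieck abelian category $\Lex\cA$; AB5-exactness gives, for each $n$, some $k_n\geq 0$ with $\psi^{k_n}\colon x_n\to x_{n+k_n}$ zero. Picking an epimorphism $\bigoplus_{k=0}^{m}\cF[2](x_k)(-k)\twoheadrightarrow x$ from Lemma~\ref{lem:fundamental facts about}(2), setting $K:=\max_{0\leq k\leq m}k_k$, and exploiting that each element of $x_n$ is a sum of terms $\psi^a\phi^b\alpha$ with $\alpha\in x_k$ ($k\leq m$) together with $\psi\phi=\phi\psi$, one concludes $\psi^K$ vanishes on $x_n$ for every $n$; hence $x\in\cA'_{\gr,\nil}[2]$. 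Faithfulness of $\Theta$ is immediate: a morphism $h$ in $\cA'_{\gr}[2]/\cA'_{\gr,\nil}[2]$ with $\Theta(h)=0$ has image in $\ker\bar\Theta=\cA'_{\gr,\nil}[2]$, so $h=0$ in the quotient.

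Finally, for fullness, given $g\colon(y,\phi)\to(y',\phi')$ with lifts $\tilde y,\tilde y'$ produced from presentations $b[t]\to a[t]$ and $b'[t]\to a'[t]$, I would lift $g$ to a morphism of presentations. The pullback $a[t]\times_{(y',\phi')}a'[t]$ lies in $\cA[t]$ and admits a deflation $a''[t]\twoheadrightarrow a[t]\times_{(y',\phi')}a'[t]$ (Remark~\ref{rem:other df of S-poly cat}); refining the presentation of $(y,\phi)$ through $a''[t]$ makes $g$ lift to $a''[t]\to a'[t]$, and one repeats at the $b$-level. Homogenizing this lifted morphism of presentations yields $\tilde g\colon\tilde y\to \tilde y'$ with $\Theta(\tilde g)=g$. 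The main obstacle is the uniform nilpotence step---turning the pointwise vanishing $\psi^{k_n}=0$ on each $x_n$ into a single exponent killing $\psi$ globally on $x$---since this is precisely where the noetherian hypothesis on $\cA$ is exploited, and it is what underlies both the kernel identification and the well-definedness (up to nilpotent modification) of the homogenization procedure used throughout.
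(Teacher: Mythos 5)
Your essential surjectivity step is correct, and it is in fact a genuine addition: the paper's own proof only establishes faithfulness (via the embedding $\cA\rinc\cM_R$ and Lemma~\ref{lem:module case faithful}) and fullness (via Lemma~\ref{lem:psi-free} and Lemma~\ref{lem:key lem for full}), leaving essential surjectivity implicit, whereas your homogenization of a two-step presentation $b[t]\onto{f}a[t]\rdef(y,\phi)$ and the computation $\bar\Theta(\tilde f)=f$ settle it cleanly. Your identification of $\ker\bar\Theta$ with $\cA'_{\gr,\nil}[2]$ also works, but by a different route from the paper's: instead of embedding into modules and invoking Lemma~\ref{lem:module case faithful}, you use AB5 in $\Lex\cA$ to write $\ker(x_n\to\colim_\psi x_m)=\colim_k\ker(\psi^k|_{x_n})$, noetherianness of $x_n$ to stabilize this chain and obtain $\psi^{k_n}|_{x_n}=0$, and then the finite presentation $\bigoplus_{k\le m}\cF[2](x_k)(-k)\rdef x$ to propagate a uniform bound. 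This is valid in spirit, but the passage ``each element of $x_n$ is a sum of terms $\psi^a\phi^b\alpha$'' must be replaced by an arrow-theoretic argument (for instance, observing that $\psi^K$ post-composed with the canonical epimorphism from the free object factors through each $\psi^{k_k}$, hence is zero).

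The genuine gap is in your fullness argument. What you construct is a lift $\tilde g\colon\tilde y\to\tilde y'$ of $g$ between the \emph{specific} homogenized objects $\tilde y,\tilde y'$ built from presentations of $\Theta(x)$ and $\Theta(x')$; but fullness requires, for \emph{arbitrary} objects $x,x'$ of the quotient and any $g\colon\Theta(x)\to\Theta(x')$, a morphism $x\to x'$ in $\cA'_{\gr}[2]/\cA'_{\gr,\nil}[2]$ mapping to $g$. Your argument does not explain why $x$ is isomorphic to $\tilde y$ in the quotient, and attempting to lift the isomorphism $\Theta(\tilde y)\cong\Theta(x)$ is precisely the fullness statement you are trying to prove, so this cannot be waved away. (Faithfulness plus essential surjectivity plus fullness-on-a-class-of-representatives does not, by itself, yield fullness, since one cannot a priori connect an arbitrary $x$ with its representative $\tilde y$ inside the quotient.) Closing this gap is exactly where the paper invests its work: Lemma~\ref{lem:psi-free} replaces an arbitrary target $y$ by a $\psi$-free quotient with nilpotent kernel, and Lemma~\ref{lem:key lem for full} (a noetherian stabilization argument in $\Lex\cA$) shows that a morphism $\Theta(x)\to\Theta(y)$ into a $\psi$-free $y$ factors through a finite stage $y_k$, from which a genuine morphism $x(-l)\to y$ is produced via the free cover of $x$. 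You would need an analogue of this step --- or a direct argument that every object of $\cA'_{\gr}[2]$ becomes isomorphic in the quotient to one of your homogenized lifts --- to complete the proof.
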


To prove Theorem~\ref{prop:cannonical isom}, 
we need to the following lemmata: 

\begin{lem}
\label{lem:module case faithful}
Let $R$ be a ring with unit and let us consider 
the polynomial ring $R[t]$ over $R$ and 
let 
$\displaystyle{M=\bigoplus_{n=0}^{\infty} M_n}$ 
be a finitely generated graded right $R[t]$-module. 
If the map $1-t:M \to M$ is surjective, 
then $M$ is $t$-nilpotent. 
Namely, there exists an integer $n$ such that 
$Mt^n=0$. 
\end{lem}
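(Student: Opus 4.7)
The plan is to exploit two facts: elements of the graded module $M = \bigoplus_{n\geq 0} M_n$ have finite support in the degree decomposition, and $M$ is finitely generated. First I would reduce to showing that each \emph{homogeneous} element of $M$ is killed by some power of $t$, and then uniformize over a finite set of homogeneous generators.

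For the first step, given a homogeneous $y \in M_N$, surjectivity of $1-t$ furnishes $x \in M$ with $y = x - xt$. Decompose $x = \sum_{i=0}^{D} x_i$ with $x_i \in M_i$; this sum is finite because $M$ is a \emph{direct} sum. Since $M_i \cdot t \subseteq M_{i+1}$, comparing homogeneous components on both sides of $y = x - xt$ yields the relations $x_i - x_{i-1} t = 0$ for $i \neq N$ and $x_N - x_{N-1} t = y$, with the convention $x_{-1} = 0$. Propagating the recursion upward from $i = 0$ forces $x_0 = x_1 = \cdots = x_{N-1} = 0$, then $x_N = y$, and then $x_{N+j} = y t^{j}$ for $j \geq 0$. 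Since $x_i = 0$ for $i > D$, this forces $y t^{D - N + 1} = 0$.

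For the second step, I would pick finitely many \emph{homogeneous} generators $m_1, \ldots, m_s$ of $M$ over $R[t]$ (the existence of such a homogeneous generating set for a finitely generated graded module is standard: decompose any finite generating set into its homogeneous components). Applying the first step, pick $k_j$ with $m_j t^{k_j} = 0$ and set $n := \max_j k_j$. Since $t$ is central in $R[t]$, for any element $m = \sum_j m_j \cdot p_j(t)$ with $p_j(t) \in R[t]$ one has
\[
m \cdot t^n = \sum_j m_j \cdot p_j(t) \cdot t^n = \sum_j (m_j t^{n}) \cdot p_j(t) = 0,
\]
hence $M t^n = 0$, giving the asserted $t$-nilpotence.

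The main ``obstacle'' is really only bookkeeping: ensuring that the degree-matching recursion is set up correctly (including the edge case $N = 0$, which is handled uniformly by the convention $x_{-1} = 0$) and that finite generation upgrades to \emph{homogeneous} finite generation. Once finite support in the direct sum decomposition is used, there is no deeper difficulty.
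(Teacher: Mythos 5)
Your proof is correct and follows essentially the same route as the paper's: reduce to a homogeneous element $y$ via a finite homogeneous generating set, take a preimage $x$ of $y$ under $1-t$, decompose $x$ into its finitely many homogeneous components, and compare degrees in $y = x - xt$ to propagate the recursion $x_{i} = x_{i-1}t$ up to the top of the support, forcing $y t^{D-N+1} = 0$. The only cosmetic difference is that you spell out the uniformization step (taking $n = \max_j k_j$ over homogeneous generators and using centrality of $t$), which the paper leaves implicit.
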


\begin{proof}
Since $M$ is finitely generated by homogenious elements, 
we shall just check that for any homogenious element $y$ in $M_k$, 
there exists a positive integer $l$ such that $yt^l=0$. 
By assumption, there exists 
an element $\displaystyle{x=\sum_{j=0}^mx_j}$ in $M$ 
such that we have the equality 
\begin{equation}
\label{equ:(1-t)x=y}
x(1-t)=y
\end{equation}
where $x_j$ is the $j$th homogenious component of $x$. 
By comparing the homogenious components 
of the equality (\ref{equ:(1-t)x=y}), 
we notice that $x_j$ is equal to $0$ if $0\leq j \leq k-1$ or $j=m$, 
$y$ if $j=k$ and $x_{j-1}t$ if $k+1\leq j\leq m-1$. 
Therefore if $m\leq k$, we have $y_k=0$ and if $m>k$, 
we have $yt^{m-k}=x_{k+1}t^{m-k-1}=\cdots =x_m=0$. 
Hence we get the desired result.
\end{proof}

\begin{para}
\label{para:faithful proof}
We prove that $\Theta$ is faithful. 
By Corollary~\ref{cor:embedd thm}, 
there exists a ring with unit $R$ and 
an exact fully faithful embeddings $j\colon \cA\rinc \cM_R$ 
and $k\colon \Lex\cA \rinc \Mod(R)$ which makes the diagram below commutative: 
$${\footnotesize{\xymatrix{
\cA \ar[r]^{y_{\cA}} \ar[d]_{j} & \Lex\cA \ar[d]^{k}\\
\cM_R \ar[r]_{\iota} & \Mod(R)
}}}$$
where the functor $y_{\cA}$ is the yoneda embedding functor and 
$\iota$ is the canonical inclusion functor. 
Then the functor $j$ induces the 
fully faithful embedding
$$j'\colon =j_{\gr}[2]_{\fingen}\colon 
\cA_{\gr}[2]_{\fingen}=\cA'_{\gr}[2] \rinc 
{(\cM_R)}_{\gr}[2]_{fg}=\cM_{R[t_1,t_2]\gr}$$ 
which makes the diagram below commtative 
by virtue of Remark~\ref{rem:finitely generated} and 
Example~\ref{ex:gradedcategory}. 
$${\footnotesize{\xymatrix{
\cA_{\gr}[2] \ar[r]^{\!\!\!\! j_{\gr}[2]_{\fingen}} \ar[d]_{\bar{\Theta}_{\cA,\fingen}} & 
\cM_{R[t_1,t_2],\gr}\ar[d]^{\bar{\Theta}_{\cM_R,\fingen}}\\
\cA[t]={(\End\Lex\cA)}_{\fingen} \ar[r]_{\ \ \ \ \ \End(k)} & \End\Mod(R) .
}}}$$
For an object $x$ in $\cA'_{\gr}[2]$, 
assume that $\bar{\Theta}_{\cA,\fingen}(x)$ is a zero object. 
Then by Lemma~\ref{lem:module case faithful}, 
$j'(x)$ is a $t_1$-nilpotent $R$-module and 
therefore $x$ is $\psi$-nilpotent. 
Hence $\Theta_{\cA}$ is faithful. 
\qed 
\end{para}

\begin{df}[\bf $\psi$-free object]
\label{nt:psi free} 
An object $z$ in $\cA_{\gr}[2]$ is {\it $\psi$-free} 
if a morphism $\psi_n\colon z_n \to z_{n+1}$ is a monomorphism 
for any non-negative integer $n$. 
\end{df}

\begin{lem}
\label{lem:psi-free}
For any object $y$ in $\cA'_{\gr}[2]$, 
there exists a $\psi$-free object $z$ in $\cA'_{\gr}[2]$ 
and an epimorphism $u\colon y \to z$ in $\cA'_{\gr}[2]$ 
such that the object $\ker u$ is in $\cA_{\gr,\nil}'[2]$. 
\end{lem}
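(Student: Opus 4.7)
The plan is to take $K$ to be the ``$\psi$-torsion'' subobject of $y$ and set $z := y/K$. More precisely, for each integer $m \geq 0$ I consider the componentwise kernel
$$L^{(m)}_j := \ker\bigl(\psi_j^m : y_j \to y_{j+m}\bigr) \subset y_j.$$
I claim $L^{(m)}$ is a graded subobject of $y$ in $\cA_{\gr}[2]$. Stability under $\psi$ is immediate. Stability under $\phi$ uses the commutation relation $\psi^i_{m+1}\psi^j_m = \psi^j_{m+1}\psi^i_m$ built into the generating category $\langle 2\rangle$: if $\psi^m(x) = 0$, then $\psi^m(\phi(x)) = \phi(\psi^m(x)) = 0$. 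So $\phi(L^{(m)}_j) \subset L^{(m)}_{j+1}$.

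Since $y$ is noetherian in $\cA_{\gr}[2]$, the ascending chain $0 = L^{(0)} \subset L^{(1)} \subset L^{(2)} \subset \cdots$ of admissible subobjects stabilizes: there exists $n_0$ with $L^{(n_0)} = L^{(n_0+k)}$ for every $k \geq 0$. Set $K := L^{(n_0)}$ and $z := y/K$, with quotient morphism $u: y \rdef z$. Then $K$ is noetherian as a subobject of the noetherian object $y$ (Lemma~\ref{lem:noetherian}(1)), and by construction $\psi^{n_0}_k = 0$ for every $k$, so $K \in \cA'_{\gr,\nil}[2]$. Likewise $z$ is noetherian, hence lies in $\cA'_{\gr}[2]$.

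It remains to check that $z$ is $\psi$-free. If $\bar x \in z_j$ satisfies $\psi(\bar x) = 0$ in $z_{j+1}$, lift $\bar x$ to some $x \in y_j$; then $\psi(x) \in K_{j+1} = L^{(n_0)}_{j+1}$, which gives $\psi^{n_0+1}(x) = \psi^{n_0}(\psi(x)) = 0$. Thus $x \in L^{(n_0+1)}_j = L^{(n_0)}_j = K_j$ by the stabilization, so $\bar x = 0$; that is, $\psi_j : z_j \to z_{j+1}$ is monic for every $j$.

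The one genuinely nontrivial point in this outline is the passage from the componentwise $\psi$-nilpotency that $K$ obviously satisfies (every element of every $K_j$ is killed by some power of $\psi$) to the \emph{uniform} nilpotency demanded by Definition~\ref{df:nilpotent obj}. This is exactly where the noetherian hypothesis on $y$ as a graded object, rather than merely componentwise, is essential; without the stabilization of $\{L^{(m)}\}_m$ one could not bound the exponent by a single $n_0$, and the verification that $z$ is $\psi$-free would also break down.
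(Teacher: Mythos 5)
Your proof is correct, but it takes a genuinely different — and more self-contained — route than the paper's. The paper forms $z_n := \im\bigl(\iota_n\colon y_n \to \bar{\Theta}(y)\bigr)$, where $\bar{\Theta}(y) = \underset{\psi}{\colim}\, y_j$ in $\Lex\cA$, and then deduces that $\ker(y \rdef z)$ is $\psi$-nilpotent from the faithfulness argument of \ref{para:faithful proof}, which itself rests on the embedding $\cA \rinc \cM_R$ (Corollary~\ref{cor:embedd thm}) and the explicit module computation of Lemma~\ref{lem:module case faithful}. You instead build the same kernel $K$ internally to $\cA'_{\gr}[2]$ as $K := L^{(n_0)}$ where $L^{(m)}_j := \ker(\psi^m_j\colon y_j \to y_{j+m})$ and $n_0$ is a stabilization index; the uniform exponent comes straight from noetherian-ness of $y$ as a graded object, and $\psi$-freeness of $z = y/K$ falls out of the identity $\psi_j^{-1}(K_{j+1}) = \ker(\psi^{n_0+1}_j) = L^{(n_0+1)}_j = K_j$. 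The two constructions coincide — in $\Lex\cA$ one has $\ker\iota_n = \bigcup_m L^{(m)}_n$, which stabilizes to $L^{(n_0)}_n$ — so you are building the paper's $z$ without ever leaving $\cA'_{\gr}[2]$ or invoking $\bar{\Theta}$, its exactness, or its faithfulness. What the paper's route buys, which yours does not, is the additional fact $\Theta(z)\isoto\Theta(y)$, which the paper reuses immediately afterward in the proof that $\Theta$ is full; your argument would have to re-derive that separately. One small stylistic caveat: your verification that $z$ is $\psi$-free is written with elements, but as you note it is really the kernel identity above, which is category-theoretically valid as written.
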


\begin{proof}
For any non-negative integer $n$, 
we denote the canonical morphism 
from $y_n$ to $\underset{\phi}{\colim} y_j=\theta(y)$ by 
$\iota_n:y_n \to \theta(y)$ and we put $z_n:=\im \iota_n$. 
Then we have the commmutative diagrams below
$${\footnotesize{\xymatrix{
y_n \ar[r]^{\psi_n} \ar[d]_{\iota_n} & y_{n+1} \ar[d]^{\iota_{n+1}}\\
\Theta(y) \ar[r]_{\colim\psi_j} & \Theta(y)  
}
\xymatrix{
y_n \ar[rr]^{\phi_n} \ar[rd]_{\iota_n} & & y_{n+1} \ar[ld]^{\iota_{n+1}}\\
& \Theta(y) .
}}}$$
Therefore $\psi_n$ 
and $\phi_n$ 
induce a morphism $z_n \onto{\bar{\psi}_n} z_{n+1}$ 
and a monomorphism $z_n \overset{\bar{\phi}_n}{\rinc} z_{n+1}$ 
for any non-negative integer $n$. 
Then $z=\{z_n,\bar{\psi}_n,\bar{\phi}_n\}$ is a $\psi$-free object 
in $\cA_{\gr}[2]$ and there exists a canonical short exact sequence 
$$\ker \mu \rinf y \overset{\mu}{\rdef} z.$$
Notice that $y$ is in $\cA'_{\gr}[2]$ and 
therefore $z$ is also in $\cA'_{\gr}[2]$. 
Obviously $\Theta(y) \onto{\Theta(\mu)} \Theta(z)=\Theta(y)$ is 
an isomorphism in $\cA[t]$. 
Hence by \ref{para:faithful proof}, 
the object $\ker \mu$ is in $\cA'_{\gr,\nil}[2]$. 
\end{proof}

\begin{lem}
\label{lem:key lem for full}
$\mathrm{(1)}$ 
For any object $x$ in $\cA$, 
any $\psi$-free object $y$ in $\cA'_{\gr}[2]$ and 
any morphism $\Theta(x[\psi,\phi])=x[t] \onto{a} \Theta(y)$, 
there exists a non-negative integer $k$ and 
a morphism $u\colon x[\psi,\phi](-k) \to y$ in $\cA'_{\gr}[2]$ 
such that $a=\Theta(x[\psi,\phi] \overset{\psi^k}{\leftarrow} 
x[\psi,\phi](-k) 
\onto{u}y)$.\\
$\mathrm{(2)}$ 
For any $\psi$-free object $y$ and any object $z$ in $\cA_{\gr}[2]$ 
and any morphism $\Theta(z) \onto{a} \Theta(y)$, 
there exists a non-negative integer $k$ and a morphism 
$u\colon z_n[\psi,\phi](-k) \to y$ in $\cA'_{\gr}[2]$ 
such that $\Theta(u)$ makes the diagram below commutative 
$${\footnotesize{\xymatrix{
& \Theta(z_n[\psi,\phi](-n-k)) \ar[r]^{\Theta(\alpha(-k))} 
\ar[d]^{\Theta(\psi^k)} \ar[ld]_{\Theta(\psi^n)} & 
\Theta(z(-k)) \ar[d]^{\Theta(\psi^k)}\\
\Theta(z_n[\psi,\phi](-k)) \ar[rd]_{\Theta(u)} & 
\Theta(z_n[\psi,\phi](-n)) \ar[r]^{\Theta(\alpha)} & \Theta(z) \ar[ld]^a\\
&\Theta(y)
}}}$$
where the morphism $\alpha\colon z_n[\psi,\phi](-n) \to z$ 
is the canonical morphism as in Remark~\ref{rem:adjointnessofF[n]}. 
\end{lem}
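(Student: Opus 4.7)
The plan is to prove (1) by showing that a morphism $a\colon x[t] \to \Theta(y)$ is determined by its restriction $a_0 = a|_{xt^0}\colon x \to \Theta(y)$, that this restriction factors through some stage $y_k$ of the filtered colimit $\Theta(y) = \colim_\psi y_n$ by compactness of $x$, and that the resulting map $x \to y_k$ can be upgraded to the required morphism $u$ of graded objects via the adjunction $\cF[2] \dashv (-)_0$. Part (2) will then follow from (1) after precomposing with the canonical morphism $\alpha$.

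\textbf{For (1).} Since $a$ is a morphism in $\End\Lex\cA$, it commutes with the $t$-action and is therefore determined by $a_0 := a|_{xt^0}\colon x \to \Theta(y)$ in $\Lex\cA$, with $a(xt^j) = (\colim\phi_n)^j \circ a_0$. Because $y$ is $\psi$-free, the filtered colimit $\Theta(y) = \colim_\psi y_n$ is the directed union of the subobjects $y_n$ in $\Lex\cA$, and since $x$ is representable and hence a finitely presented (compact) object of the Grothendieck category $\Lex\cA$, the map $a_0$ factors through some $\iota_k\colon y_k \rinf \Theta(y)$, giving $a_0'\colon x \to y_k$ in $\cA$. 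Applying the $(-k)$-shifted version of the adjunction in Remark~\ref{rem:adjointnessofF[n]}, namely $\Hom_{\cA_\gr[2]}(\cF[2](x)(-k), y) \isoto \Hom_\cA(x, y_k)$, one obtains $u\colon x[\psi,\phi](-k) \to y$ whose component at degree $k+i+j$ sends the $x_{(i,j)}$-summand via $\psi^{i}\phi^{j} \circ a_0'$.

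To verify the formula $a = \Theta(u) \circ \Theta(\psi^k)^{-1}$ (noting that $\Theta(\psi^k)$ is an isomorphism by Lemma~\ref{lem:thetabar}(3)), track the canonical identifications $\Theta(\cF[2](x)(-k)) \cong x[t] \cong \Theta(\cF[2](x))$ from the proof of Lemma~\ref{lem:thetabar}(4): under these identifications $\Theta(\psi^k)$ becomes the identity on $x[t]$, and $\Theta(u)$ restricts to $a_0$ on the generator $xt^0 \subset x[t]$, so both sides are $t$-equivariant maps out of $x[t]$ agreeing on the generator and must coincide. For part (2), apply (1) to the object $z_n \in \cA$ and the morphism $\tilde a := a \circ \Theta(\alpha) \circ \Theta(\psi^n)^{-1}\colon z_n[t] = \Theta(z_n[\psi,\phi]) \to \Theta(y)$; this produces $k \geq 0$ and $u\colon z_n[\psi,\phi](-k) \to y$ with $\tilde a = \Theta(u) \circ \Theta(\psi^k)^{-1}$, and unwinding $\tilde a$ together with the naturality square $\alpha \circ \psi^k = \psi^k \circ \alpha(-k)$ in $\cA_\gr[2]$ yields the commutative diagram in the statement.

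The main obstacle is the compactness/factorization step in (1), which rests on recognizing $x$ as a finitely presented object of $\Lex\cA$ via Yoneda so that morphisms into $\colim_\psi y_n$ land in a finite stage; everything else is a careful matching of universal properties and the natural identifications coming from Lemma~\ref{lem:thetabar}.
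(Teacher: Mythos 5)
Your proof is correct and follows essentially the same route as the paper's: reduce (1) to factoring the restriction $\bar{a}\colon x \to \Theta(y)$ through a finite stage $\iota_k\colon y_k\to\Theta(y)$ of the colimit, use the adjunction of Remark~\ref{rem:adjointnessofF[n]} to produce $u$, then verify the formula by comparing both sides on the generator of $x[t]$; and derive (2) by precomposing with $\Theta(\alpha)$. The one place you diverge is in the factorization step. You invoke compactness of the representable $x$ in $\Lex\cA$ --- that $\Hom_{\Lex\cA}(x,-)$ commutes with the filtered colimit $\colim_\psi y_n$, which is a correct (if never explicitly stated in this paper) consequence of $\Lex\cA$ being locally coherent with the Yoneda image as its finitely presented objects. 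The paper instead argues from noetherianity: $\im\bar{a}$ is a quotient of the noetherian object $x$, hence noetherian by Lemma~\ref{lem:noetherian}\,(1), so the ascending chain $\{\im\bar{a}\cap\im\iota_n\}_n$ stabilizes, giving $\im\bar{a}\subseteq\im\iota_k$, and since $\iota_k$ is monic ($y$ being $\psi$-free) the factorization through $y_k$ follows. Both are valid; the noetherianity argument stays inside the machinery the paper has already built (and is what is directly available given that $y(x)$ is noetherian by the cited [Pop73, 5.8.8--9]), while your compactness argument is a touch more general and self-contained but leans on an unstated structural fact about $\Lex\cA$.
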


\begin{proof}
$\mathrm{(1)}$ 
We denote the composition of 
morphisms 
$x \to x[t] \onto{a} \Theta(y)$ 
in $\Lex\cA$ 
by $\bar{a}$ and the canonical morphism from $y_n$ to $\Theta(y)$ 
by $\iota_n:y_n\to \Theta(y)$ for any non-negative integer $n$. 
Since $\im\bar{a}$ is a quotient of $x$ in $\Lex\cA$, 
it is noetherian by Lemma~\ref{lem:noetherian} $\mathrm{(1)}$ 
and therefore an asscending chain of subobjects of $\im\bar{a}$, 
$\{\im \bar{a}\cap \im\iota_n\}_{n\in\bbN}$ is stational, 
say $\im \bar{a}\cap\im\iota_k=\im\bar{a}\cap\iota_{k+1}=\cdots$. 
Then since $\Lex\cA$ is Grothendieck, 
we have $\im \bar{a}=\colim_{i\geq k}\im\bar{a}\cap\im\iota_i=
\im\bar{a}\cap \im\iota_k$. 
Therefore the morphism $\bar{a}$ 
factors through morphisms 
$x\onto{a'} y_k$ and $y_k \onto{\iota_k} \Theta(y)$. 
By Remark~\ref{rem:adjointnessofF[n]}, $a'$ induces the desired morphism 
$u:x[\psi,\phi](-k) \to y$. 

\sn
$\mathrm{(2)}$ 
By applying assertion $\mathrm{(1)}$ to the morphism 
$a\Theta(\alpha):z_n[t] \to \Theta(y)$, 
we get the assertion.
\end{proof}

\begin{para}
We prove that $\Theta$ is full. 
Namely, for any objects $x$, $y$ in $\cA'_{\gr}[2]$, 
we prove that the map
$$\Theta\colon\Hom_{\cA_{\gr}'[2]/\cA_{\gr,\nil}'[2]}(x,y) 
\to \Hom_{\cA[t]}(\Theta(x),\Theta(y))$$ 
is surjective. 
By Lemma~\ref{lem:psi-free}, 
we may assume that $y$ is $\psi$-free. 
By Lemma~\ref{lem:fundamental facts about} $\mathrm{(2)}$, 
there exists a non-negative integer $m$ 
such that the canonical morphism $\displaystyle{z\colon =
\bigoplus_{k=0}^m x_k[\psi,\phi](-k) \overset{P}{\rdef} x}$ 
is an epimorphism. 
Let $u\colon\Theta(x) \to \Theta(y)$ be a morphism in $\cA[t]$. 
Then by Lemma~\ref{lem:key lem for full} $\mathrm{(2)}$, 
there exists a non-negative integer $l$ and a morphism 
$u\colon z(-l) \to y$ which makes the right diagram below commutative 
$${\footnotesize{\xymatrix{
\ker P(-l) \ar@{>->}[r]^j & z(-l) \ar[d]_u \ar@{->>}[r]^{P(-l)} & x(-l) 
\ar@{-->}[ld]^{\bar{u}}\\
& y
}
\xymatrix{
\Theta(\ker P(-l)) \ar@{>->}[r]^{\Theta(j)} & \Theta(z(-l)) 
\ar[d]_{\Theta(u)} \ar@{->>}[r]^{\Theta(P(-l))} & \Theta(x(-l)) 
\ar[d]^{\Theta(\psi^l)}\\
& \Theta(y) & \Theta(x) \ar[l]^a .
}}}$$
Since $\Theta$ is faithful, $uj$ is the zero morphism, 
$u$ induces a morphism $\bar{u}\colon x(-l) \to y$ 
in the left commutative diagram above and we have 
the equality $a=\Theta(x\overset{\psi^l}{\leftarrow}x(-l) \onto{\bar{u}}y)$. 
Hence we get the desired result. 
\qed
\end{para}

\begin{cor}
\label{cor:canisom} 
The sequence 
$$\cA'_{\gr,\nil}[2] \to \cA'_{\gr}[2] \onto{\bar{\Theta}} \cA[t].$$
is derived exact. 
In particular, there exists a distingushed triangle 
$$\frakL(\cA'_{\gr,\nil}[2])\to 
\frakL(\cA'_{\gr}[2]) \onto{\frakL(\bar{\Theta})}
\frakL(\cA[t]) \onto{\partial} 
\Sigma\frakL(\cA'_{\gr,\nil}[2])$$
in $\cT$.
\end{cor}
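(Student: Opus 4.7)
The plan is to apply Example~\ref{ex:exact seq of ab cat} to the Serre inclusion $\cA'_{\gr,\nil}[2] \rinc \cA'_{\gr}[2]$, which is a Serre subcategory by Lemma~\ref{lem:niliSerresubcat}, combined with the abelian equivalence $\Theta\colon \cA'_{\gr}[2]/\cA'_{\gr,\nil}[2] \isoto \cA[t]$ from Theorem~\ref{prop:cannonical isom}. Once Example~\ref{ex:exact seq of ab cat} applies, the sequence in the statement is derived exact, and passing to $\frakL$ together with the connecting natural transformation $\partial$ from Definition~\ref{df:Loc th} delivers the distinguished triangle. The entire content therefore lies in verifying hypothesis $(\ast)$ of Example~\ref{ex:exact seq of ab cat}.

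Given a monomorphism $x \rinf y$ in $\cA'_{\gr}[2]$ with $x \in \cA'_{\gr,\nil}[2]$, fix $N \geq 1$ with $\psi_x^N = 0$. My candidate for the target object is
$$z \;:=\; y\,/\,\im\bigl(\psi_y^M\colon y(-M) \to y\bigr)$$
for a sufficiently large $M$. Any such $z$ is a quotient of the noetherian object $y$, hence noetherian by Lemma~\ref{lem:noetherian}, and $\psi_z^M = 0$ by construction, so $z$ lies in $\cA'_{\gr,\nil}[2]$. It therefore remains to exhibit an $M$ for which $x \cap \im(\psi_y^M\colon y(-M)\to y) = 0$, so that the composition $x \rinf y \rdef z$ is a monomorphism.

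For this I would prove the graded analogue of the abstract Artin--Rees lemma (Corollary~\ref{cor:abst AR}): for any inflation $x \rinf y$ in $\cA'_{\gr}[2]$, there exists $n_0 \geq 0$ such that
$$\im\bigl(\psi_y^n\colon y(-n) \to y\bigr) \cap x \;=\; \psi^{\,n-n_0}\bigl(\im(\psi_y^{n_0}\colon y(-n_0)\to y)\cap x\bigr)$$
for every $n \geq n_0$. The proof mirrors \S\ref{subsec:abstract Artin-Ress lemma}: one defines $\psi$-filtrations and their blow-up objects for $\psi$-pairs in $\cA'_{\gr}[2]$, placed inside a Rees-type ambient category (e.g.\ $\End\Lex\cA'_{\gr}[2]$, with the Rees shift as the distinguished endomorphism), and re-establishes the characterization of stability from Lemma~\ref{lem:char of stability} using the noetherianity of $\cA'_{\gr}[2]$ supplied by Lemma~\ref{lem:fundamental facts about} and Theorem~\ref{thm:Abst Hilb basis}. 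Granting this, take $M := n_0 + N$: the right-hand side is then $\psi^N$ applied to a subobject of $x$, which vanishes because $\psi_x^N = 0$. Hence $x \cap \im\psi_y^M = 0$ and $(\ast)$ holds.

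The main obstacle is the Artin--Rees analogue. Structurally it parallels \S\ref{subsec:abstract Artin-Ress lemma}, but it requires recasting the blow-up construction and the stability-equivalence in a two-variable graded setting where $\psi$ is a grading shift rather than an honest endomorphism; the subtlety is to find a single ambient category in which both $y$ and its $\psi$-Rees object sit as noetherian objects. Once this is in hand, the rest of the argument---applying Example~\ref{ex:exact seq of ab cat}, identifying the quotient via Theorem~\ref{prop:cannonical isom}, and invoking Definition~\ref{df:Loc th} to extract $\partial$---is routine.
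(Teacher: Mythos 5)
Your overall strategy---verify condition $(\ast)$ of Example~\ref{ex:exact seq of ab cat} by an Artin--Rees argument, then combine with Theorem~\ref{prop:cannonical isom}---matches the paper's proof, and your reduction to the vanishing of $x\cap\im(\psi_y^M\colon y(-M)\to y)$ for large $M$ is exactly right. However, the proposal leaves a genuine gap: the graded Artin--Rees statement you lean on is never proven, and you yourself flag the obstruction (``the subtlety is to find a single ambient category in which both $y$ and its $\psi$-Rees object sit as noetherian objects''). That is precisely the problem, and as written the argument does not resolve it. Re-developing blow-ups and the stability criterion for $\psi$-pairs in the two-graded setting, where $\psi$ shifts the grading rather than being an endomorphism, is not a routine transcription of \S\ref{subsec:abstract Artin-Ress lemma}: the blow-up construction there lives in $\End\Lex\cA$ and uses crucially that $t$ is an endomorphism of a fixed object.

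The move you are missing is to \emph{transport} the problem rather than rebuild the machinery. The paper uses the faithful exact functor $\Gamma\colon\cA'_{\gr}[2]\to\cA[t_1,t_2]$ (constructed in the proof of Lemma~\ref{lem:fundamental facts about}~$\mathrm{(1)}$), which sends a graded object to $\bigl(\bigoplus_n x_n,\bigoplus_n\phi_n,\bigoplus_n\psi_n\bigr)$, thereby turning the grading shift $\psi$ into an honest endomorphism $t_2$ on a noetherian object. One then applies the already-established abstract Artin--Rees lemma (Corollary~\ref{cor:abst AR}, with $\cA[t_1]$ playing the role of the base noetherian abelian category) to $\Gamma(x)\subset\Gamma(y)$ and $t_2$; since $x$ is $\psi$-nilpotent, $\Gamma(x)$ is $t_2$-nilpotent, so the right-hand side of the Artin--Rees identity vanishes for large $n$, and faithfulness of $\Gamma$ pulls the vanishing back to $\cA'_{\gr}[2]$. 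This single observation replaces the entire ``graded Artin--Rees'' subproject you describe as the main obstacle. Without it, or a worked-out substitute, the proposal is incomplete at its central step.
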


\begin{proof}
We check the condition $\mathrm{(\ast)}$ in Example~\ref{ex:exact seq of ab cat}.
Let $y\rinf x$ be a monomorphism in $\cA'_{\gr}[2]$ with $y$ 
in $\cA'_{\gr,\nil}[2]$. 

\begin{claim}
There exists an integer $n\geq 0$ such that 
$y\cap \im(\psi^n\colon x(-n)\to x)$ is trivial.
\end{claim}

If we prove the claim, then the composition 
$y\rinf x \rdef \coker(\psi^n\colon x(-n)\to x)$ is the desired monomorphism. 

\begin{proof}[Proof of Claim]
Consider the faithful exact functor 
$\Gamma\colon \cA'_{\gr}[2] \to \cA[t_1,t_2]$, 
$x\mapsto \left (\bigoplus x_n,\bigoplus \phi_n,\bigoplus \psi_n \right )$ 
in the proof of Lemma~\ref{lem:fundamental facts about} $\mathrm{(1)}$. 
Then by the abstract Artin-Rees lemma~\ref{cor:abst AR}, 
there exists an intger $n_0\geq 0$ such that 
\begin{equation}
\label{equ:AR-lem}
\im(t_{2,\Gamma(x)}^n\colon \Gamma(x)\to \Gamma(x))\cap \Gamma(y)=
\im(t_{2,\Gamma(x)}^{n-n_0}\colon (\im(t^{n_0}_{2,\Gamma(x)}\colon \Gamma(x)\to\Gamma(x))\cap\Gamma(y)) \to \Gamma(y))
\end{equation}
for any $n\geq n_0$. 
Moreover since $y$ is a $\psi$-nilpotent object, the right hand side 
of the equality $\mathrm{(\ref{equ:AR-lem})}$ is trivial for sufficiently large 
$n$. 
By the faithfulness of $\Gamma$, 
we obtain the result.
\end{proof}  
By Example~\ref{ex:exact seq of ab cat} and 
Theorem~\ref{prop:cannonical isom}, the sequence 
$\cA'_{\gr,\nil}[2]\to \cA'_{\gr}[2] \onto{\bar{\Theta}}\cA[t]$ 
is derived exact.
\end{proof}

Recall the definition of Serre radical from Definition~\ref{df:Serre radical}.

\begin{prop}
\label{prop;devissage}
We regard $\cA'_{\gr}[1]$ as a full subcategory of $\cA'_{\gr,\nil}[2]$ 
by the exact functor defined by sending 
an object $(x,\psi^1)$ in $\cA'_{\gr}[1]$ to 
an object $(x,0,\psi^1)$ in $\cA'_{\gr,\nil}[2]$. 
Then $\cA'_{\gr}[1]$ is closed under taking 
finite direct sums, 
admissible sub- and quotient objects in $\cA'_{\gr,\nil}[2]$ 
and ${}^S\!\!\!\sqrt{\cA'_{\gr}[1]}=\cA'_{\gr,\nil}[2]$. 
In particular the inclusion functor $\cA'_{\gr}[1]\rinc \cA'_{\gr,\nil}[2]$ 
induces an isomorphism $\frakL(\cA'_{\gr}[1])\isoto\frakL(\cA'_{\gr,\nil}[2])$. 
\end{prop}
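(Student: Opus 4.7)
The strategy is to prove the two structural assertions and then appeal to the nilpotent invariance of $\frakL$. The key observation is that after the embedding, $\cA'_{\gr}[1]$ is realised as the full subcategory of those objects $(x,\psi,\phi)\in\cA'_{\gr,\nil}[2]$ with $\psi=0$.

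Closure under finite direct sums and admissible sub- and quotient objects is immediate, since by Remark~\ref{rem:limit in graded cat} (co)limits in $\cA_{\gr}[2]$ are computed termwise in $\cA$; hence the $\psi$-action on a subobject, quotient, or finite direct sum of objects with trivial $\psi$-action is again the restriction (resp.\ the induced map, resp.\ the component-wise map) of the zero morphism, and so remains trivial.

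For the Serre radical equality, fix $x=(x,\psi,\phi)\in\cA'_{\gr,\nil}[2]$ and choose $n\geq 0$ with $\psi^n_k=0$ for every $k$. For each $0\leq l\leq n$, define a termwise subobject $K_l\subseteq x$ by
$$(K_l)_k:=\ker\bigl(\psi_{k+l-1}\circ\cdots\circ\psi_k\colon x_k\to x_{k+l}\bigr).$$
The commutation $\phi\psi=\psi\phi$ built into $\langle 2\rangle$ ensures that both $\psi$ and $\phi$ map $(K_l)_k$ into $(K_l)_{k+1}$, so $K_l$ is genuinely a subobject in $\cA'_{\gr}[2]$; by Lemma~\ref{lem:noetherian}~$\mathrm{(1)}$ it is noetherian, and it is manifestly $\psi$-nilpotent, hence lies in $\cA'_{\gr,\nil}[2]$. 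One has $K_0=0$ and $K_n=x$, and on each quotient $K_{l+1}/K_l$ the morphism $\psi$ vanishes, for $v\in(K_{l+1})_k$ gives $\psi^l(\psi v)=\psi^{l+1}v=0$, so $\psi v\in(K_l)_{k+1}$. Each subquotient therefore lies in the essential image of $\cA'_{\gr}[1]$, and by Proposition~\ref{prop:Serre radical} this exhibits $x$ as an element of the Serre subcategory of $\cA'_{\gr,\nil}[2]$ generated by $\cA'_{\gr}[1]$, proving ${}^S\!\!\!\sqrt{\cA'_{\gr}[1]}=\cA'_{\gr,\nil}[2]$.

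With the two assertions in place, Definition~\ref{df:nipotent invariance} applied to the nilpotent invariant fine localizing theory $\frakL$ gives the desired isomorphism $\frakL(\cA'_{\gr}[1])\isoto\frakL(\cA'_{\gr,\nil}[2])$. I do not anticipate any substantial obstacle; the only mildly delicate verification is that the kernels $K_l$ are stable under $\phi$, which is precisely where the commutation relation $\phi\psi=\psi\phi$ in the category $\langle 2\rangle$ is used.
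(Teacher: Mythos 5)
Your proof is correct and follows the same two-step outline as the paper (verify the closure properties and the Serre-radical equality, then invoke nilpotent invariance), but you realize the Serre-radical step with a different filtration. The paper's proof uses the descending \emph{image} filtration $x \supseteq \im\psi \supseteq \im\psi^2 \supseteq \cdots \supseteq \im\psi^n = 0$ and notes that each subquotient $\im\psi^k/\im\psi^{k+1}$ has trivial $\psi$-action; you use the ascending \emph{kernel} filtration $0 = K_0 \subseteq K_1 \subseteq \cdots \subseteq K_n = x$ and show the same for $K_{l+1}/K_l$. Both are the standard filtrations one puts on a nilpotent module, and both feed Proposition~\ref{prop:Serre radical} in the same way, so the difference is cosmetic rather than structural. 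Your extra care in checking that the $K_l$ are $\phi$-stable (via $\phi\psi = \psi\phi$ in $\langle 2 \rangle$) is appropriate and exactly parallels the implicit check in the paper that $\im\psi^k$ is $\phi$-stable. One small point worth making explicit, which the paper also leaves implicit, is that $\cA'_{\gr,\nil}[2]$ is a noetherian abelian category (it is a Serre subcategory of $\cA'_{\gr}[2]$ by Lemma~\ref{lem:niliSerresubcat}, and its objects are noetherian by Lemma~\ref{lem:noetherian}) so that Definition~\ref{df:nipotent invariance} indeed applies.
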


\begin{proof}
Obviously 
$\cA'_{\gr}[1]$ is closed under 
taking finite direct sums, 
admissible sub- and quotient objects 
in $\cA'_{\gr,\nil}[2]$. 
Moreover for any $x$ in $\cA'_{\gr,\nil}[2]$, 
let us consider the filtration $\{\im\psi^k\}_{k\in\bbN}$ of $x$. 
Then for each $k$, $\im\psi^k/\im\psi^{k+1}$ is 
isomorphic to an object in $\cA'_{\gr}[1]$. 
The last assertion follows from niloptent invariance of $\frakL$.
\end{proof}

For an object $x$ in an additive category $\cB$, 
recall the definition 
of the polynomial object $x[t]$ in $\End\cB$ from \ref{para:polynomial category}.

\begin{cor}
\label{cor:devissage}
We have the canonical isomorphism between the polynomial object $\frakL(\cA)[t]$ 
of $\frakL(\cA)$ 
and $\frakL(\cA'_{\gr,\nil}[2])$:
$$\lambda'\colon\frakL(\cA)[t]
\isoto \frakL(\cA'_{\gr,\nil}[2]).$$
Here for any non-negative integer $m$, 
$\frakL(\cA)t^m\to \frakL(\cA'_{\gr,\nil}[2])$ 
is induced from an exact functor 
$\cA\to \cA'_{\gr,\nil}[2]$ which sends an object $a$ in $\cA$ to 
$(a[\phi](-m),0,\phi)$ in $\cA'_{\gr,\nil}[2]$.
\qed
\end{cor}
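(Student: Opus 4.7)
The corollary is essentially a direct consequence of Theorem~\ref{cor:canisomoftf}(3) combined with Proposition~\ref{prop;devissage}. My plan is to compose the two isomorphisms produced there and then verify that the resulting structural maps on the summands $\frakL(\cA)t^m$ have the asserted description.

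First I would specialize Theorem~\ref{cor:canisomoftf}(3) to $n=1$, which produces a canonical isomorphism
$$\lambda_{\cA,1}\colon\frakL(\cA)[t]\isoto(\frakL(\cA'_{\gr}[1]),\frakL((-1)))$$
in $\End\cT$, whose $m$-th structural map $\frakL(\cA)t^m\to\frakL(\cA'_{\gr}[1])$ is induced by the exact functor $\cF[1](-)(-m)\colon\cA\to\cA'_{\gr}[1]$ sending $a$ to the free graded object $a[\psi^1](-m)$. Next I would apply Proposition~\ref{prop;devissage}, which says that the embedding $j\colon\cA'_{\gr}[1]\rinc\cA'_{\gr,\nil}[2]$ defined by $(x,\psi^1)\mapsto(x,0,\psi^1)$ induces an isomorphism $\frakL(j)\colon\frakL(\cA'_{\gr}[1])\isoto\frakL(\cA'_{\gr,\nil}[2])$. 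Composing yields the claimed
$$\lambda'\colon\frakL(\cA)[t]\isoto\frakL(\cA'_{\gr,\nil}[2]).$$

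For the compatibility of the $m$-th summand, I would trace the composition $j\circ\cF[1](-)(-m)\colon\cA\to\cA'_{\gr,\nil}[2]$: it sends $a$ first to $a[\psi^1](-m)$ in $\cA'_{\gr}[1]$ and then to $(a[\psi^1](-m),0,\psi^1)$ in $\cA'_{\gr,\nil}[2]$. Under the notational convention of Section~\ref{sec:Main thm} in which $\phi$ denotes $\psi^2$, the original generator $\psi^1$ of the source $\cA'_{\gr}[1]$ sits in the $\psi^2=\phi$ slot of $\cA'_{\gr,\nil}[2]$ and is therefore renamed $\phi$; the image thus becomes $(a[\phi](-m),0,\phi)$, which is precisely the functor described in the statement. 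There is essentially no analytical obstacle; the only point of real care is in lining up the two conventions so that the composite really is the stated functor, and this is purely bookkeeping.
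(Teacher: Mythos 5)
Your argument is correct and is exactly the implicit content of the paper's \qed: compose $\lambda_{\cA,1}$ from Theorem~\ref{cor:canisomoftf}$\mathrm{(3)}$ with the isomorphism $\frakL(\cA'_{\gr}[1])\isoto\frakL(\cA'_{\gr,\nil}[2])$ of Proposition~\ref{prop;devissage}, and then observe that the composite $\cA\onto{\cF[1](-)(-m)}\cA'_{\gr}[1]\rinc\cA'_{\gr,\nil}[2]$ is precisely $a\mapsto(a[\phi](-m),0,\phi)$ once the lone structural morphism $\psi^1$ of $\cA'_{\gr}[1]$ is placed in the $\psi^2=\phi$ slot of $\cA'_{\gr,\nil}[2]$, in accordance with the embedding $(x,\psi^1)\mapsto(x,0,\psi^1)$ used in Proposition~\ref{prop;devissage} and the naming convention of \S\ref{subsec:Nilp}. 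Your bookkeeping on the relabelling is accurate and nothing is missing.
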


\subsection{The proof of the main theorem}
\label{subsec:The proof of the main theorem}

In this subsection, 
we will finish the proof of Theorem~\ref{thm:abstract main thm}. 
For an object $x$ in an additive category $\cB$, 
recall the definition 
of the polynomial object $x[t]$ in $\End\cB$ from \ref{para:polynomial category}. 
The key lemma is the following:

\begin{lem}
\label{lem:commutative diagram}
There exists the commutative diagram below
$$\xymatrix{
\frakL(\cA)[t] \ar[r]^{\lambda'} \ar[d]_{\id-t} & \frakL(\cA'_{\gr,\nil}[2]) 
\ar[d]\\
\frakL(\cA)[t] \ar[r]_{\lambda_{\cA,2}} & \frakL(\cA'_{\gr}[2])
}$$
where $\frakL(\cA)[t]$ is the polynomial object of $\frakL(\cA)$ and 
the right vertical morphism are induced from the 
inculsion functor $\cA'_{\gr,\nil}[2]\rinc \cA'_{\gr}[2]$.
\end{lem}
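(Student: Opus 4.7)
The plan is to verify the commutativity of the diagram summand by summand on the domain $\frakL(\cA)[t] = \bigoplus_{m\geq 0}\frakL(\cA)t^m$, and then reduce the resulting identity in $\frakL(\cA'_{\gr}[2])$ to the additivity theorem applied to an explicit short exact sequence of exact functors $\cA \to \cA'_{\gr}[2]$.

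First I would identify the two composite maps $\frakL(\cA)t^m \to \frakL(\cA'_{\gr}[2])$ for each fixed $m$. By the commutative triangle in Theorem~\ref{cor:canisomoftf}~$\mathrm{(3)}$, the restriction of $\lambda_{\cA,2}$ to the $k$-th summand is $\frakL(F_k)$, where $F_k\colon \cA \to \cA'_{\gr}[2]$ is the exact functor $a \mapsto a[\psi,\phi](-k)$. Since $\id-t$ sends $\frakL(\cA)t^m$ to the subobject $\frakL(\cA)t^m \oplus \frakL(\cA)t^{m+1}$ by the formula $\alpha t^m \mapsto \alpha t^m - \alpha t^{m+1}$, the bottom--left composite is the difference $\frakL(F_m) - \frakL(F_{m+1})$. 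On the other hand, by the description of $\lambda'$ in Corollary~\ref{cor:devissage}, its restriction to the $m$-th summand followed by the inclusion $\cA'_{\gr,\nil}[2] \rinc \cA'_{\gr}[2]$ is $\frakL(G_m)$ for the exact functor $G_m\colon a \mapsto (a[\phi](-m),0,\phi)$.

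Next I would construct the natural short exact sequence of exact functors
\begin{equation*}
F_{m+1} \overset{\psi}{\rinf} F_m \rdef G_m
\end{equation*}
where the first map is the natural transformation induced by the generator $\psi\colon x(-m-1)\to x(-m)$ of Definition~\ref{nt:degreeshiftofobjects}. A direct computation unpacks the values of $F_m$ and $F_{m+1}$ degree by degree: $a[\psi,\phi]_{k-m}$ decomposes as $\bigoplus_{i+j=k-m}a_{i,j}$, and $\psi$ identifies $F_{m+1}(a)_k$ with the direct summand indexed by $i\geq 1$. This shows $\psi$ is a (termwise split) monomorphism, and that the termwise quotient has underlying graded object $a[\phi](-m)$ with $\psi$-structure zero and $\phi$-structure the identity shifts, i.e.\ exactly $G_m(a)$. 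Naturality in $a$ is immediate, so $F_{m+1} \rinf F_m \rdef G_m$ is indeed a short exact sequence of exact functors from $\cA$ to $\cA'_{\gr}[2]$.

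Finally I would invoke additivity. Since $\frakL$ is a fine localizing theory on a subcategory of $\RelEx_{\operatorname{consist}}$ containing $\AbCat$, Remark~\ref{rem:localizing theory}~$\mathrm{(2)(ii)}$ applies and $\frakL$ is an additive theory. Applying additivity to the sequence above yields $\frakL(F_m) = \frakL(F_{m+1}) + \frakL(G_m)$ in $\cT$, equivalently $\frakL(G_m) = \frakL(F_m) - \frakL(F_{m+1})$, which is precisely the equality of the two composites on the $m$-th summand. The main technical point is the explicit verification of the short exact sequence in $\cA'_{\gr}[2]$; after that the argument is purely formal. Note also that since $\frakL(\cA)[t]$ is a coproduct and $\cT$ admits countable coproducts, checking the equality on each $\frakL(\cA)t^m$ suffices to establish commutativity of the whole diagram.
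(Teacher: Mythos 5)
Your proof is correct and follows essentially the same route as the paper: you check commutativity on each summand $\frakL(\cA)t^m$, identify the two composites as $\frakL(G_m)$ and $\frakL(F_m)-\frakL(F_{m+1})$, and deduce the needed identity from the short exact sequence of exact functors $F_{m+1}\overset{\psi}{\rinf}F_m\rdef G_m$ together with additivity. The paper phrases the last step via Lemma~\ref{lem:characteristic filtration} (additivity for a two-term characteristic filtration) rather than invoking Remark~\ref{rem:localizing theory}~$\mathrm{(2)(ii)}$ directly, but this is the same additivity input.
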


\begin{proof}
For each $k\geq 0$, 
we consider the commutativity of diagram below
\begin{equation}
\label{equ:dia}
\xymatrix{
\frakL(\cA)t^k
\ar[r] \ar[d]_{\id-t} & \frakL(\cA'_{\gr,\nil}[2]) 
\ar[d]\\
\frakL(\cA)[t] \ar[r]_{\lambda_{\cA,2}} & \frakL(\cA'_{\gr}[2]).
}
\end{equation}
An object $a$ in $\cA$ goes to $(a[\phi](-k),0,\phi)$ by 
the compositions of the functors 
$\cA \to \cA'_{\gr,\nil}[2] \to \cA'_{\gr}[2]$ and 
notice that the functor $\cF[2](-k):\cA\to\cA'_{\gr}[2]$ induces 
$\frakL(\cA) \onto{t^k} 
\frakL(\cA)[t]\isoto \frakL(\cA'_{\gr}[2])$ 
by Theorem~\ref{cor:canisomoftf}. 
On the other hand, 
for any object $a$ in $\cA$, 
there exists an exact sequence in $\cA'_{\gr}[2]$ 
$$a[\phi,\psi](-k-1) \overset{\psi}{\rinf} a[\phi,\psi](-k) \rdef 
(a[\phi](-k),0,\phi).$$
By Lemma~\ref{lem:characteristic filtration}, 
this implies that the diagram $\mathrm{(\ref{equ:dia})}$ 
is commutative. 
\end{proof}

\begin{proof}[Proof of Theorem~\ref{thm:abstract main thm}]
Consider the following commutative diagram
$$\xymatrix{
\frakL(\cA)[t] \ar[r]^{\id-t} \ar[d]^{\wr}_{\lambda'} & 
\frakL(\cA)[t] \ar[r] \ar[d]^{\wr}_{\lambda_{\cA,2}} & 
\frakL(\cA) \ar[d]^{\frakL(-\otimes_{\cA}\bbZ[t])}\\
\frakL(\cA'_{\gr,\nil}[2]) \ar[r] & 
\frakL(\cA'_{\gr}[2]) \ar[r]_{\frakL(\bar{\Theta})} 
& \frakL(\cA[t]) \ar[r]_{\partial} & \Sigma\frakL(\cA'_{\gr,\nil}[2]).
}$$
Since top horizontal line is a split exact sequence 
by Lemma~\ref{lem:split exact seq} 
and $\lambda_{\cA,2}$ and $\lambda'$ are isomorphisms by 
Theorem~\ref{cor:canisomoftf} and Corollary~\ref{cor:devissage}, 
the bottom distingushed triangle is also split, namely $\partial=0$. 
Hence $\frakL(-\otimes_{\cA}\bbZ[t])$ is an isomorphism by the five lemma.
\end{proof}

\subsection{Generalized Vorst problem}
\label{subsec:Gen Vorst prob}

In this subsection, 
we propose the geenralized Vorst problem. 
We start by defining the notion of regularity for abelian categories. 
Recall the definition of 
homological dimensions of abelian categoies from \ref{rem:GMnotation}.

\begin{df}[\bf Regular abelian category]
\label{df:regular abel}
Let $\cB$ be an abelian category.\\
$\mathrm{(1)}$ 
We denote the full subcategory of projective objects in $\cB$ 
by $\cP_{\cB}$.\\
$\mathrm{(2)}$ 
$\cB$ is {\it regular} if it is noetherian and 
the inclusion functor $\cP_{\cB}\rinc \cB$ is a derived equivalence. 
The last condition is equivalent to the condition that 
$\hdim\cB$ is finite. 
\end{df}

\begin{prop}
\label{prop:GM} 
{\rm (\cf \cite[III 5.16, 5.19]{GM96}).} 
Let $\cA$ be a noetherian abelian category.\\
$\mathrm{(1)}$ 
For any projective object $p$ in $\cA$, 
$p[t]$ is also a projective object in $\cA[t]$. 
In particular, there exists the base change functor 
$-\otimes_{\cA}\bbZ[t]\colon\cP_{\cA} 
\to \cP_{\cA[t]}$ which sending an object $p$ to $p[t]$.\\
$\mathrm{(2)}$ 
If $\cA$ is regular, then $\cA[t]$ is also regular. 
\end{prop}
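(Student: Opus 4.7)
My plan is to prove part~(1) by a straightforward adjunction argument, then leverage it for part~(2) by imitating the Hilbert syzygy theorem via a Koszul-type two-term resolution.

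For part~(1), a morphism $p[t]\to(x,\phi)$ in $\cA[t]\subset\End\Lex\cA$ is uniquely determined by its degree-zero component $p\to x$ in $\Lex\cA$, since the other components are forced to equal $\phi^n$ composed with it. This yields a natural isomorphism
$$\Hom_{\cA[t]}\bigl(p[t],(x,\phi)\bigr)\cong\Hom_{\Lex\cA}\bigl(y_{\cA}(p),x\bigr)=x(p).$$
As a functor of $(x,\phi)$, this is the composite of the underlying-object functor $U\colon\cA[t]\to\Lex\cA$, $(x,\phi)\mapsto x$, with evaluation at $p$. The functor $U$ is exact by Remark~\ref{rem:limitinendcat}, so it suffices to show $y_{\cA}(p)$ is projective in $\Lex\cA$ whenever $p\in\cP_{\cA}$. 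This is standard for noetherian abelian categories (\cf~\cite[5.8.8--9]{Pop73}): the filtered-colimit presentation of $\Lex\cA$ by representables, combined with the noetherianness of $p$, reduces any lifting problem over $y_{\cA}(p)$ to one over $p$ in $\cA$, where the projectivity of $p$ applies.

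For part~(2), set $n=\hdim\cA$. Part~(1) together with Remark~\ref{rem:other df of S-poly cat} immediately gives $\cA[t]$ enough projectives of the form $p[t]$ with $p\in\cP_{\cA}$. To bound the projective dimension of $M=(x,\phi)\in\cA[t]$, I would invoke the Koszul-type short exact sequence
$$0\longrightarrow x[t]\xrightarrow{\,t_{\mathrm{shift}}-\Phi\,}x[t]\longrightarrow M\longrightarrow 0$$
in $\End\Lex\cA$, where $\Phi$ is the endomorphism of $x[t]$ induced termwise by $\phi$. Since $\cA$ is regular, $\Lex\cA$ inherits enough projectives of the form $y_{\cA}(p)$ and has homological dimension $\leq n$, so $x$ admits a length-$n$ projective resolution by objects $y_{\cA}(p_i)$ with $p_i\in\cP_{\cA}$. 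Applying the exact functor $-[t]$ produces a projective resolution of $x[t]$ in $\End\Lex\cA$ whose terms $p_i[t]$ all lie in $\cA[t]$ and are projective there by~(1). A mapping-cone construction on a chain-level lift of $t_{\mathrm{shift}}-\Phi$ then produces a complex of length $\leq n+1$ in $\End\Lex\cA$, all of whose terms lie in $\cA[t]$, that resolves $M$; since exactness in $\cA[t]$ coincides with exactness in $\End\Lex\cA$ (Remark~\ref{rem:limitinendcat}), this is a projective resolution of $M$ in $\cA[t]$, giving $\hdim\cA[t]\leq n+1$ and hence $\cA[t]$ regular.

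The main obstacle is verifying the auxiliary claim that $\hdim\Lex\cA\leq\hdim\cA$ when $\cA$ is regular. This is classical in the module case $\cA=\cM_R$, where $\Lex\cA=\Mod(R)$ by~\ref{para:GQ-emmbeding} and the global dimension of $\Mod(R)$ equals that of $R$. In general one expects to deduce it either by transferring resolutions along the Gabriel-Popescu embedding $\cA\rinc\cM_R$ from Corollary~\ref{cor:embedd thm} (after checking that the induced $\Lex\cA\rinc\Mod(R)$ preserves the relevant homological structure), or directly by writing any $F\in\Lex\cA$ as a filtered colimit of representables $y_{\cA}(q_i)$, resolving each $q_i$ in $\cA$ by projectives of length $\leq n$, applying $y_{\cA}$ (which preserves projectivity), and gluing via exactness of filtered colimits in $\Lex\cA$ together with closure of projective objects under coproducts.
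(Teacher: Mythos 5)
Your proposal essentially reconstructs the Gelfand--Manin argument that the paper cites wholesale (the paper's own ``proof'' is the one-line reference to \cite[III 5.19, 5.20]{GM96} together with Theorem~\ref{thm:Abst Hilb basis}). Part~$\mathrm{(1)}$ is fine: the adjunction $\Hom_{\cA[t]}(p[t],(x,\phi))\cong\Hom_{\Lex\cA}(y_{\cA}(p),x)$ is correct, the reduction to projectivity of $y_{\cA}(p)$ in $\Lex\cA$ is the right move, and the sketch of that last fact (local noetherianness of $\Lex\cA$ together with the identification of the noetherian objects of $\Lex\cA$ with the essential image of $\cA$, so that a lifting problem with noetherian source truncates to one in $\cA$) is the standard argument.

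Part~$\mathrm{(2)}$, however, has a genuine gap, and it is not the one you flag at the end. You assert that ``$x$ admits a length-$n$ projective resolution by objects $y_{\cA}(p_i)$ with $p_i\in\cP_{\cA}$.'' This is impossible: $x=U(M)$ for $M\in\cA[t]$ is typically \emph{not} a noetherian object of $\Lex\cA$ (already $U(p[t])=\bigoplus_n y_{\cA}(p)$ is an infinite coproduct), so it cannot be a quotient of a single $y_{\cA}(p_0)$, which is noetherian. Any projective resolution of $x$ in $\Lex\cA$ must use infinite coproducts $\bigoplus_j y_{\cA}(p_j)$; but then $(\bigoplus_j y_{\cA}(p_j))[t]\cong\bigoplus_j p_j[t]$ is not noetherian in $\End\Lex\cA$ and hence is not an object of $\cA[t]$, so your mapping-cone complex does not live in $\cA[t]$ as you claim. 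The repair is to resolve $M$ directly inside $\cA[t]$ by objects $p_i[t]$ with $p_i\in\cP_{\cA}$ (possible by Remark~\ref{rem:other df of S-poly cat}, enough projectives in $\cA$, and Lemma~\ref{lem:noetherian} to keep each syzygy in $\cA[t]$), and then use the characteristic sequence together with the bound $\hdim\Lex\cA\le n$ to show that $\operatorname{pd}_{\End\Lex\cA}(M)\le n+1$, so that the relevant syzygy $K$ of the resolution you built is projective in $\End\Lex\cA$ and therefore (since epimorphisms and $\Hom$-sets in $\cA[t]$ agree with those in $\End\Lex\cA$) projective in $\cA[t]$. Note that this last step --- passing a projective-dimension bound from $\End\Lex\cA$ down to the noetherian subcategory $\cA[t]$ --- is a point your write-up silently conflates with ``exactness coincides,'' but it needs its own (short) argument. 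The obstacle you do identify, $\hdim\Lex\cA\le\hdim\cA$, is real and your sketch for it is reasonable, but it is not the only one.
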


\begin{proof}
$\mathrm{(1)}$ is proven in \cite[III 5.19]{GM96}. 
$\mathrm{(2)}$ follows from the proof of \cite[III 5.20]{GM96} and 
Theorem~\ref{thm:Abst Hilb basis}.
\end{proof}

\begin{cor}
\label{cor:mainthm}
Let $\cT$ be a triangulated category, 
$\frakL:\ExCat \to \cT$ a nipotent invariant localizing theory 
and $\cA$ a regular noetherian essentially small abelian category. 
Then the base change functor 
$-\otimes_{\cA}\bbZ[t]\colon\cP_{\cA} \to \cP_{\cA[t]}$ 
induces an isomorphism 
$$\frakL(\cP_{\cA})\isoto\frakL(\cP_{\cA[t]}).$$
\end{cor}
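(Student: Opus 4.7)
The plan is to deduce this from Theorem~\ref{thm:abstract main thm} applied to the abelian categories $\cA$ and $\cA[t]$, and then to transport the resulting isomorphism along derived equivalences onto the subcategories of projective objects. Since $\cA$ is regular by hypothesis, the inclusion $\cP_{\cA}\rinc\cA$ is a derived equivalence by definition. By Proposition~\ref{prop:GM} $\mathrm{(2)}$, the category $\cA[t]$ is also regular, so the inclusion $\cP_{\cA[t]}\rinc\cA[t]$ is likewise a derived equivalence. Applying the derived invariance of $\frakL$ (Remark~\ref{rem:localizing theory} $\mathrm{(2)}$ $\mathrm{(i)}$) to both inclusions then yields isomorphisms $\frakL(\cP_{\cA})\isoto\frakL(\cA)$ and $\frakL(\cP_{\cA[t]})\isoto\frakL(\cA[t])$ in $\cT$.

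Next I would assemble the square
$$\xymatrix{
\frakL(\cP_{\cA}) \ar[r]^{\frakL(-\otimes_{\cA}\bbZ[t])} \ar[d]_{\wr} & \frakL(\cP_{\cA[t]}) \ar[d]^{\wr}\\
\frakL(\cA) \ar[r]_{\frakL(-\otimes_{\cA}\bbZ[t])} & \frakL(\cA[t]),
}$$
whose vertical arrows are the isomorphisms just constructed. Commutativity is essentially tautological: Proposition~\ref{prop:GM} $\mathrm{(1)}$ guarantees that the base change functor on $\cA$ restricts to one on projectives landing in $\cP_{\cA[t]}$, so the two composites $\cP_{\cA}\to\cA[t]$ coincide at the level of exact functors. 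Theorem~\ref{thm:abstract main thm}, applied in $\AbCat\subset\RelEx_{\operatorname{consist}}$ and using the nilpotent invariance of $\frakL$, identifies the bottom horizontal arrow as an isomorphism; a two-out-of-three argument in $\cT$ then forces the top horizontal arrow to be an isomorphism as well.

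The main obstacle I anticipate is reconciling the hypotheses: Theorem~\ref{thm:abstract main thm} asks for $\cT$ to be closed under countable coproducts and for $\frakL$ to be a \emph{fine} localizing theory (categorical homotopy invariant and commuting with filtered colimits), whereas the statement of the corollary only posits a nilpotent invariant localizing theory on $\ExCat$. Bridging this gap would require either reading those additional properties into the hypothesis tacitly or revisiting the argument of Theorem~\ref{thm:abstract main thm} to verify that, in the regular setting at hand, the softer hypothesis suffices. Once that matching of hypotheses is addressed, the remainder of the argument is purely formal.
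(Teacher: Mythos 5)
Your proposal matches the paper's own proof: the same commutative square, with the vertical arrows isomorphisms by regularity of $\cA$ and $\cA[t]$ (via Proposition~\ref{prop:GM} $\mathrm{(2)}$) and the bottom arrow an isomorphism by Theorem~\ref{thm:abstract main thm}, concluding by two-out-of-three. The hypothesis mismatch you flag is real in the bare statement of the corollary, but is covered by the standing convention set just before Subsection~\ref{subsec:Nilp} --- that $\cT$ is closed under countable coproducts and $\frakL$ is a fine localizing theory on $\ExCat$ --- so your argument goes through as written.
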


\begin{proof}
The inclusion functors $\cP_{\cA}\rinc \cA$ and $\cP_{\cA[t]}\rinc \cA[t]$ 
induce the commutative diagram below:
$$\xymatrix{
\frakL(\cP_{\cA}) \ar[r]^{\frakL(-\otimes_{\cA}\bbZ[t])} \ar[d] & 
\frakL(\cP_{\cA[t]}) \ar[d]\\
\frakL(\cA) \ar[r]_{\frakL(-\otimes_{\cA}\bbZ[t])} & 
\frakL(\cA[t]).
}$$
Here the vertical morphisms and the bottom horizontal morphism are isomorphisms 
by regularity of $\cA$, Proposition~\ref{prop:GM} $\mathrm{(2)}$ and 
Theorem~\ref{thm:abstract main thm} respectively. 
Hence we obtain the result.
\end{proof}

\begin{prob}[\bf Generalized Vorst problem]
\label{prob:GVP}
Is the converse of Corollary~\ref{cor:mainthm} true or not? 
More precisely, let $\cA$ be a noetherian abelian category which has 
enough projective objects. 
Assume that for any positive integer $r$, 
the base change functor $\cP_{\cA} \to \cP_{\cA[t_1,\cdots,t_r]}$ 
induces an isomorphism on (connective) $K$-theory:
$$K(\cP_{\cA})\isoto K(\cP_{\cA[t_1,\cdots,t_r]}).$$
Then is $\cA$ regular or not?
\end{prob}

The problem has an affirmative answer if 
$\cA$ is a category of finitely generated modules over 
a noetherian commutative ring essentially of 
finite type over a base field and if 
we assume the resolution of singularities. 
(See \cite{CHW08} and \cite{GH12}). 

\sn
\textbf{Acknowledgements.} 
The authors wish to express their deep gratitude 
to Marco Schlichting 
for instructing them in the proof of 
the abstract Hilbert basis theorem~\ref{thm:Abst Hilb basis}. 
They also very thank to the referee for giving valuable comments.

\mn
SATOSHI MOCHIZUKI\\
{\it{DEPARTMENT OF MATHEMATICS,
CHUO UNIVERSITY,
BUNKYO-KU, TOKYO, JAPAN.}}\\
e-mail: {\tt{mochi@gug.math.chuo-u.ac.jp}}\\

\mn
AKIYOSHI SANNAI\\
{\it{GRADUATE SCHOOL OF MATHEMATICAL SCIENCES,
UNIVERSITY OF TOKYO,
3-8-1 KOMABA, MEGURO-KU, TOKYO, JAPAN.}}\\
e-mail: {\tt{sannai@ms.u-tokyo.ac.jp}}
\end{document}